\setlist{leftmargin=*}
\tikzstyle{basepoint}=[circle,fill=black,minimum height=4pt,inner sep=0pt, outer sep=0pt, style={transform shape=false}]
\tikzstyle{crossing}=[circle,fill=white,minimum height=6pt,inner sep=0pt, outer sep=0pt, style={transform shape=false}]
\tikzstyle{blackdot}=[circle,fill=black,minimum height=6pt,inner sep=0pt, outer sep=0pt, style={transform shape=false}]
\tikzstyle{greendot}=[circle,fill=green!50!black,minimum height=6pt,inner sep=0pt, outer sep=0pt, style={transform shape=false}]
\tikzstyle{bluedot}=[circle,fill=blue,minimum height=6pt,inner sep=0pt, outer sep=0pt, style={transform shape=false}]
\tikzstyle{blackempty}=[circle,draw,thick,black,fill=white,minimum height=6pt,inner sep=0pt, outer sep=0pt, style={transform shape=false}]
\tikzstyle{greenempty}=[circle,draw,thick,green!50!black,fill=white,minimum height=6pt,inner sep=0pt, outer sep=0pt, style={transform shape=false}]
\definecolor{darkblue}{rgb}{0,0,0.4}
\newtheorem{theorem}{Theorem}[section]
\newtheorem{lemma}[theorem]{Lemma}
\newtheorem{conjecture}[theorem]{Conjecture}
\newtheorem{proposition}[theorem]{Proposition}
\theoremstyle{definition}
\newtheorem{definition}[theorem]{Definition}
\newtheorem{remark}[theorem]{Remark}
\newtheorem{example}[theorem]{Example}
\def\R{\mathbb{R}}
\def\Z{\mathbb{Z}}
\def\Q{\mathbb{Q}}
\def\F{\mathbb{F}}
\def\HF {\widehat{\operatorname{HF}}}
\def\CFK {\widehat{\operatorname{CFK}}}
\def\HFK {\widehat{\operatorname{HFK}}}
\def\uCFK {\widetilde{\operatorname{CFK}}}
\def\uHFK {\widetilde{\operatorname{HFK}}}
\def\nextpt{\nu}
\def\CFKtil {\widetilde{\operatorname{CFK}}}
\def\HFKtil {\widetilde{\operatorname{HFK}}}
\def\ort {\mathfrak{o}}
\def\DD {\mathcal{D}}
\def\FF {\mathcal{F}}
\def\GG {\mathcal{G}}
\def\HH {\mathcal{H}}
\def\LL {\mathcal{L}}
\def\MM {\mathcal{M}}
\def\NN {\mathcal{N}}
\def\OO {\mathcal{O}}
\def\T{\mathbb{T}}
\def\a {\mathbf{a}}
\def\b {\mathbf{b}}
\def\c {\mathbf{c}}
\def\e {\mathbf{e}}
\def\p {\mathbf{p}}
\def\w {\mathbf{w}}
\def\x {\mathbf{x}}
\def\y {\mathbf{y}}
\def\z {\mathbf{z}}
\def\mults {\mathbf{n}}
\newcommand{\abs}[1] {\left\lvert #1 \right\rvert}
\def\minus{\smallsetminus}
\def\co{\colon\thinspace}
\DeclareMathOperator{\im}{im} \DeclareMathOperator{\id}{id} \DeclareMathOperator{\rank}{rank}
 \DeclareMathOperator{\nbd}{nbd}
 \DeclareMathOperator{\gr}{gr}
\DeclareMathOperator{\can}{can}
\definecolor{darkblue}{rgb}{0,0,0.5}
\definecolor{darkred}{rgb}{0.5,0,0}
\definecolor{darkgreen}{rgb}{0,0.5,0}
\numberwithin{equation}{section}
\renewcommand{\tilde}{\widetilde}
\renewcommand{\hat}{\widehat}
\newcommand{\from}{\colon}
\newcommand{\into}{\hookrightarrow}
\newcommand{\covers}{\gtrdot}
\newcommand{\iscovered}{\lessdot}
\newcommand{\diff}{d}
\newcommand{\bdy}{\partial}
\newcommand{\del}{\partial}
\DeclareMathOperator{\Id}{id}
\newcommand{\card}[1]{\left\vert{#1}\right\vert}
\newcommand{\mc}{\mathcal}
\newcommand{\Kh}{\operatorname{Kh}}
\newcommand{\rKh}{\widetilde{\Kh}}
\newcommand{\intgr}{\gr_{q}}
\newcommand{\homgr}{\gr_{h}}
\newcommand{\deltagr}{\gr_{\delta}}
\newcommand{\KhCx}{\operatorname{CKh}}
\newcommand{\rKhCx}{\widetilde{\KhCx}}
\newcommand{\Khdiff}{\diff_{\Kh}}
\newcommand{\KhAlg}[1][{}]{\mathbf{A}^{#1}}
\renewcommand{\th}{^{\text{th}}}
\renewcommand{\emptyset}{\varnothing}
\newcommand{\robar}{}
\begin{document}

\title{Khovanov homology and knot Floer homology for pointed links}

\title{Khovanov homology and knot Floer homology for pointed links}
\author{John A. Baldwin}
\address{Department of Mathematics, Boston College, Chestnut Hill, MA 02467}
\email{john.baldwin@bc.edu}
\thanks{JAB was supported by NSF grant DMS-1406383 and NSF CAREER grant DMS-1454865.}

\author{Adam Simon Levine}
\address{Department of Mathematics, Princeton University, Princeton, NJ 08544}
\email{asl2@math.princeton.edu}
\thanks{ASL was supported by NSF grant DMS-1405378.}

\author{Sucharit Sarkar}
\address{Department of Mathematics, Princeton University, Princeton, NJ 08544}
\email{sucharit@math.princeton.edu}

\thanks{SS was supported by NSF CAREER grant DMS-1350037.}

\date{\today}

\begin{abstract}
A well-known conjecture states that for any $l$-component link $L$ in $S^3$, the rank of the knot Floer homology of $L$ (over any field) is less than or equal to $2^{l-1}$ times the rank of the reduced Khovanov homology of $L$. In this paper, we describe a framework that might be used to prove this conjecture. We construct a modified version of Khovanov homology for links with multiple basepoints and show that it mimics the behavior of knot Floer homology. We also introduce a new spectral sequence converging to knot Floer homology whose $E_1$ page is conjecturally isomorphic to our new version of Khovanov homology; this would prove that the conjecture stated above holds over the field $\Z_2$.
\end{abstract}

\maketitle



\tableofcontents

\section{Introduction} \label{sec: introduction}

This paper concerns the following conjecture, first formulated in the case of knots by Rasmussen \cite{RasmussenHomologies}:
\begin{conjecture} \label{conj: Kh-HFK}
With coefficients in any field $\F$, for any $l$-component link $L \subset S^3$ equipped with a basepoint $p \in L$, we have
\begin{equation} \label{eq: Kh-HFK}
2^{l-1} \rank \rKh(L,p;\F) \ge \rank \HFK(L;\F).
\end{equation}
\end{conjecture}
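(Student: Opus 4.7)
The plan is to follow the strategy suggested by the abstract: interpolate between reduced Khovanov and knot Floer homology via a pointed version of Khovanov homology that mimics the multi-basepoint behavior on the Floer side, then construct a spectral sequence bridging the two.

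First, I would define a pointed Khovanov chain complex $\rKhCx(D,\p)$ for a diagram $D$ of $L$ with a tuple of basepoints $\p$, at least one per component, satisfying two key properties: (i) when $|\p|=1$, the homology $\rKh(L,p)$ recovers the ordinary reduced Khovanov invariant, and (ii) introducing an extra basepoint on any component doubles the total rank of the resulting homology. Property (ii) should follow from a purely local computation reflecting the analogous phenomenon on the Floer side, where adding a free basepoint to a knot Floer diagram tensors the complex with an $\F\oplus\F$ factor.

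Next, I would construct, for a diagram $D$ of $L$ equipped with one basepoint per component, a filtered chain complex $(C,\diff)$ whose associated graded is $\rKhCx(D,\p)$ and whose total homology is $\HFK(L;\F)$. The natural model adapts Ozsv\'ath--Szab\'o's branched-double-cover spectral sequence, arranged so that the resolutions of $D$ give rise to Heegaard diagrams for $(S^3,L)$ rather than for $\Sigma(L)$; edge maps in the cube are induced by saddle cobordisms acting on the knot Floer complexes of the resolved (multi-pointed) unlinks, and the total complex is assembled as an iterated twisted mapping cone. The resulting spectral sequence would then yield, at least over $\F=\Z_2$,
\[
\rank \HFK(L;\Z_2) \le \rank \rKh(L,\p;\Z_2) = 2^{l-1}\rank \rKh(L,p;\Z_2),
\]
using property (ii) applied $l-1$ times to strip the extra basepoints.

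The principal obstacle, as I see it, is identifying the $E_1$ page of the spectral sequence with the pointed Khovanov homology $\rKh(L,\p)$ on the nose. Two things must be verified: (a) the vertex complexes in the cube of resolutions, viewed as knot Floer complexes of planar unlinks with multiple basepoints, agree with the pointed Khovanov generators at each vertex --- essentially a local Floer calculation; and (b) the edge maps, induced by saddle cobordisms at the knot Floer level, match the pointed Khovanov differential exactly. Part (b) is the delicate step, requiring a holomorphic triangle analysis analogous to but more subtle than that in the branched-cover setting, with additional bookkeeping to track how the saddle maps interact with the basepoint structure. Extension from $\Z_2$ to an arbitrary field is a further hurdle, likely demanding either an integral lift of the entire construction or a separate verification in each characteristic, since both the saddle maps and the doubling property could a priori depend on signs.
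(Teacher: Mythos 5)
The statement you are addressing is, in the paper, a \emph{conjecture} (Conjecture \ref{conj: Kh-HFK}), and the paper does not prove it: it sets up a framework reducing the $\Z_2$ case to another conjecture (Conjecture \ref{conj: H(X,D0)-intro}). Your sketch mirrors that framework closely --- a pointed Khovanov complex with doubling behavior, a knot Floer cube of resolutions built from saddle maps on multi-pointed unlinks, a filtered-complex comparison --- but it omits the one genuinely new ingredient and mislocates the open gap. The paper's construction is two-layered. One first builds a cube-filtered complex $(X,D)$, via holomorphic polygon counts in a Heegaard multi-diagram, with $H_*(X,D)\cong\HFKtil(-\hat\LL;\Z_2)$. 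One then introduces a \emph{second} filtration $\GG$ on $X$, coming from the internal Alexander gradings (equation \eqref{eq: G-filt}), whose associated graded differential $D^0$ retains only a subset of the polygons. The central conjecture is $H_*(X,D^0)\cong\Kh(L,\p;\Z_2)$. Your proposal collapses these into a single filtration whose associated graded is supposed to be the pointed Khovanov complex and whose abutment is $\HFK$; with that setup, the inequality $\rank H_*(X,D^0)\ge\rank H_*(X,D)$ (which is what ultimately gives \eqref{eq: Kh-HFK}) has no analogue, because there is no intermediate object.

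You also identify the ``principal obstacle'' as matching the $E_1$ page and edge maps of the spectral sequence with pointed Khovanov homology. That is not where the paper gets stuck: Theorem \ref{thm: E1iso-intro} proves exactly this, by explicitly computing the $\Lambda_\p\otimes\Gamma_L$-module structure of $\HFKtil$ of pointed unlinks and the induced split/merge maps (Propositions \ref{prop: HFK-unlink}, \ref{prop: HFK-edgemaps0}) and matching them with the corresponding Khovanov computations (Propositions \ref{prop: Kh-unlink}, \ref{prop: Kh-edgemaps}). The genuine, open gap is one step further: constructing a filtered chain map $\Phi\co(X,D^0)\to\KhCx(L,\p;\Z_2)$ that induces this $E_1$-page isomorphism. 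That would require controlling the counts of holomorphic polygons corresponding to the higher-dimensional faces of the cube, not just the edges, and these are not computed. Without such a $\Phi$, one only knows the $E_2$ pages agree as $\Z_2$-vector spaces, which gives no control over later pages or the abutment. Your proposal does not supply this step, so it remains exactly as conjectural as the paper's framework. Your observation about passage from $\Z_2$ to a general field is a real additional difficulty, but it is secondary to this main gap.
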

\noindent Here $\rKh(L,p)$ denotes the reduced Khovanov homology of $L$, and $\HFK(L)$ denotes the knot Floer homology of $L$. (We shall frequently suppress $p$ from the notation.) Conjecture \ref{conj: Kh-HFK} would yield a new proof that Khovanov homology detects the unknot (first shown by Kronheimer and Mrowka \cite{KronheimerMrowkaUnknot}); it would also imply that Khovanov homology detects the trefoil knot, which is the only knot for which $\HFK$ has rank $3$ \cite[Corollary 8]{HeddenWatsonGeography}. In this paper, we shall lay out a new framework for studying Conjecture \ref{conj: H(X,D0)-intro} via a modified version of Khovanov homology for links with multiple basepoints.

The starting point for most known inequalities between Khovanov homology and other link invariants is the skein exact sequence. Suppose that $A$ is a functorial link invariant which agrees with reduced Khovanov homology for unlinks and for elementary merge and split cobordisms between them, and which satisfies a skein exact sequence: if $L_0$ and $L_1$ are the resolutions of $L$ at a crossing, as shown in Figure \ref{fig: resolutions}, then the cobordism maps relating $A(L)$, $A(L_0)$, and $A(L_1)$ fit into an exact sequence of the form
\begin{equation} \label{eq: A-skein}
\dots \to A(L) \to A(L_0) \to A(L_1) \to A(L) \to \dots,
\end{equation}
just like the skein sequence satisfied by Khovanov homology. Then for any link $L$, one should expect to obtain a spectral sequence whose $E_2$ page is isomorphic to $\rKh(L)$ and which converges to $A(L)$, yielding an inequality
\[
\rank \rKh(L) \ge \rank A(L).
\]
Starting with the work of Ozsv\'ath and Szab\'o \cite{OSzDouble} (where $A(L)$ is $\HF(-\Sigma(L);\Z_2)$, the Heegaard Floer homology of the branched double cover of $L$ with coefficients in $\Z_2$), this technique has been used to prove rank inequalities between Khovanov homology and a host of other knot invariants, including the monopole and instanton Floer homologies of the branched double cover \cite{BloomMonopole, ScadutoOdd}, instanton knot homology \cite{KronheimerMrowkaUnknot}, and many others (e.g. \cite{DaemiAbelian, SzaboSpectral}).

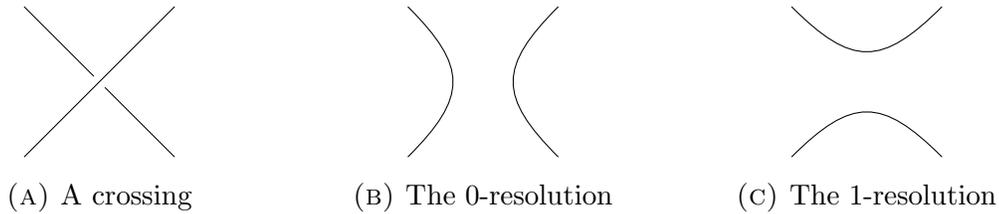
\begin{figure}
  \centering
  \begin{subfigure}[b]{0.3\textwidth}
    \[
    \begin{tikzpicture}[scale=0.2]
      \draw (0,10) -- (10,0);
      \node[crossing] at (5,5) {};
      \draw (0,0) -- (10,10);
    \end{tikzpicture}
    \]
    \caption{A crossing}
  \end{subfigure}
  \hspace{0.1in}
  \begin{subfigure}[b]{0.3\textwidth}
    \[
    \begin{tikzpicture}[scale=0.2]
      \draw (0,0) .. controls (4,4) and (4,6) .. (0,10);
      \draw (10,0) .. controls (6,4) and (6,6) .. (10,10);
    \end{tikzpicture}
    \]
    \caption{The $0$-resolution}
  \end{subfigure}
  \hspace{0.1in}
  \begin{subfigure}[b]{0.3\textwidth}
    \[
    \begin{tikzpicture}[scale=0.2]
      \draw (0,0) .. controls (4,4) and (6,4) .. (10,0);
      \draw (0,10) .. controls (4,6) and (6,6) .. (10,10);
    \end{tikzpicture}
    \]
    \caption{The $1$-resolution}
  \end{subfigure}
\caption{The $0$- and $1$-resolutions of a crossing.}
\label{fig: resolutions}
\end{figure}

The main obstacle to making knot Floer homology fit into this framework is that it behaves quite differently for links than do other invariants. For instance, if $L$ is the Hopf link, then $\HFK(L)$ has rank $4$, whereas $\rKh(L)$ has rank $2$ (as do the other invariants mentioned above). This necessitates the factor of $2^{l-1}$ in \eqref{eq: Kh-HFK}. Moreover, $\HFK$ cannot satisfy a skein sequence as in \eqref{eq: A-skein}: both resolutions at a crossing in a minimal diagram for the Hopf link are unknots, so $\rank \HFK(L_0) = \rank \HFK(L_1)=1$, which would violate the triangle inequality. The explanation for this difficulty is that knot Floer homology is really an invariant of \emph{non-degenerate pointed links}.

\begin{definition}\label{def:pointed-link}
A \emph{pointed link} $\LL = (L,\p)$ is a link $L$ together with a finite set of points $\p$ on $L$. We say that $\LL$ is \emph{non-degenerate} if every component of $L$ contains at least one point of $\p$. 
\end{definition}

Given a non-degenerate pointed link $\LL=(L,\p)$, the knot Floer homology of $\LL$, denoted $\HFKtil(\LL)$ in the notation of \cite{BaldwinLevineSpanning}, can be understood as the sutured Floer homology of the complement of $L$ equipped with a pair of meridional sutures for each point of $\p$. The original invariant $\HFK(L)$ (as defined in \cite{OSzKnot} and reinterpreted in \cite{OSzLink}) is simply $\HFKtil(\LL)$ when $\p$ is chosen to contain exactly one point on each component of $L$. On the other hand, if $p_0 \in \p$ is on the same component of $L$ as some other point of $\p$, then setting $\LL' = (L, \p \minus \{p_0\})$,
\begin{equation} \label{eq: HFK-doubling}
\HFKtil(\LL) \cong \HFKtil(\LL') \otimes V,
\end{equation}
where $V \cong \Z \oplus \Z$ with generators in bigradings $(0,0)$ and $(-1,-1)$. In particular, for an $l$-component pointed link $\LL$,
\begin{equation} \label{eq: HFK-tildehat}
\HFKtil(\LL) \cong \HFK(L) \otimes V^{\otimes \abs{\p}-l}.
\end{equation}

Manolescu \cite{ManolescuSkein} proved that $\HFKtil$ (with coefficients in $\Z_2$) does satisfy a appropriate skein sequence: if $\p$ is a set of points disjoint from the crossing being resolved, such that $\LL = (L,\p)$, $\LL_0 = (L_0,\p)$, and $\LL_1 = (L_1,\p)$ are all non-degenerate, then there is an exact sequence
\begin{equation} \label{eq: HFK-skein}
\cdots \to \HFKtil(-\LL;\Z_2) \to \HFKtil(-\LL_0;\Z_2) \to \HFKtil(-\LL_1;\Z_2) \to \HFKtil(-\LL;\Z_2) \to \cdots,
\end{equation}
where $-\LL$ denotes the mirror of $\LL$.\footnote{See Section \ref{subsec: cube-construction} for a discussion of orientations. Additionally, note that Wong \cite{WongSkein} has extended the skein sequence to $\Z$ coefficients using grid diagrams.} The number of factors of $V$ coming from \eqref{eq: HFK-tildehat} varies in each term in \eqref{eq: HFK-skein}. This eliminates the difficulties discussed above. For the example of the Hopf link, with one marked point on each component, the three groups now have ranks $4$, $2$, and $2$, since the resolutions $L_0$ and $L_1$ are now each unknots with two marked points. More generally, the skein sequence implies that for a quasi-alternating link $L$ with $l$ components,
\begin{equation} \label{eq: HFK-QA}
\rank \HFK(L;\Z_2) = 2^{l-1} \det(L) = 2^{l-1} \rank \rKh(L;\Z_2).
\end{equation}
As a result, \eqref{eq: Kh-HFK} is an equality when $L$ is quasi-alternating. On the other hand, when $L$ is an $l$-component unlink, $\rank \HFK(L;\F) = \rank \rKh(L;\F) = 2^{l-1}$ over any field $\F$, so \eqref{eq: Kh-HFK} is far from sharp.

For a pointed link $\LL = (L,\p)$, let $\hat\LL$ denote the split union of $\LL$ with an unknot containing a single marked point. Note that $\HFKtil(\hat\LL)$ is isomorphic to the direct sum of two copies of $\HFKtil(\LL)$, with Maslov gradings shifted by $\pm\frac12$ (see Lemma \ref{lemma: HFK-unred} below). In particular, over any field $\F$, we have
\begin{equation} \label{eq: HFK-unreduced}
\rank \HFKtil(\hat\LL;\F) = 2^{\abs{\p} -l+1} \rank \HFK(L;\F).
\end{equation}
By analogy to Khovanov homology, we may think of $\HFKtil(\hat\LL)$ as the ``unreduced knot Floer homology'' of $\LL$.

In view of Conjecture \ref{conj: Kh-HFK}, in this paper we introduce a variant of Khovanov homology for pointed links that mimics the properties of knot Floer homology. Given a diagram for a pointed link $(L,\p)$ (which is not required to be non-degenerate), we associate a chain complex $\KhCx(L,\p)$, which as a group is isomorphic to $\Lambda_{\p} \otimes \KhCx(L)$, where $\Lambda_\p$ is the exterior algebra on generators $\{y_p \mid p \in \p\}$. The differential combines the original Khovanov differential with a contribution coming from the basepoint action studied in Section \ref{subsec: Kh-action}; see Section \ref{subsec: mapping-cone-def} for the definition. Denote the homology of $\KhCx(L,\p)$ by $\Kh(L,\p)$.

The following theorem summarizes the main results of Sections \ref{sec: khovanov} and \ref{sec: HFK}:

\begin{theorem} \label{thm: Kh-properties}
Let $\LL = (L,\p)$ be a pointed link in $S^3$, where $\p \ne \emptyset$.
\begin{enumerate}
\item \label{item: Kh-properties: invariance}
The chain homotopy type of $\KhCx(L,\p)$ (as a differential $\Lambda_{\p}$--module) is an invariant of the isotopy type of $(L,\p)$ and does not depend on the choice of diagram; thus, $\Kh(L,\p)$ is a pointed link invariant.

\item \label{item: Kh-properties: doubling}
If $\p$ contains a point $p$ that is on the same component of $L$ as some other point of $\p$, then
\begin{equation} \label{eq: Kh-doubling}
\Kh(L,\p) \cong \Kh(L,\p \minus \{p\}) \otimes V.
\end{equation}

\item \label{item: Kh-properties: reduced}
With coefficients in any field $\F$, and for each point $p \in \p$, we have
\begin{equation} \label{eq: Kh-reduced}
\rank \Kh(L,\p; \F) \le 2^{\abs{\p}} \rank \rKh(L,p;\F).
\end{equation}
When $L$ is a knot this relation is an equality.

\item \label{item: Kh-properties: unlink}
If $L$ is an $l$-component unlink, then $\Kh(L,\p)$ is naturally isomorphic to $\HFKtil(\hat \LL)$, with rank $2^l$. Moreover, under these isomorphisms, the maps on $\Kh$ associated to elementary merges and splits of planar unlinks agree (up to sign) with the corresponding maps on $\HFKtil$. (See Proposition \ref{prop: HFK-edgemaps0} for a precise statement.)
\end{enumerate}
\end{theorem}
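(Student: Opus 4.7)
\emph{Part (1) — invariance.} The plan is to reduce invariance of $\KhCx(L, \p)$ as a differential $\Lambda_\p$-module to three ingredients: invariance of $\KhCx(L)$ under Reidemeister moves as a chain complex, the fact that each basepoint action $\xi_p$ is induced by a ``dotted identity'' cobordism, and the functoriality of Khovanov homology under cobordisms. For a Reidemeister move $D \to D'$ supported away from the basepoints, the standard Khovanov chain homotopy equivalence $\phi \co \KhCx(D) \to \KhCx(D')$ commutes with each $\xi_p$ up to chain homotopy, because pre- and post-composing the Reidemeister cobordism with a dotted tube at $p$ produces isotopic surfaces in $S^3 \times I$, so functoriality gives $\phi \circ \xi_p \simeq \xi_p \circ \phi$. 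Since $\KhCx(L, \p)$ is assembled as an iterated mapping cone of the $\xi_p$'s, a standard twisted-complex argument upgrades these chain homotopies to a genuine $\Lambda_\p$-module chain homotopy equivalence. For basepoint-sliding moves and planar isotopies, one treats the finitely many local pictures directly.

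\emph{Part (2) — doubling.} If $p, q \in \p$ lie on the same component of $L$, the basepoint actions $\xi_p, \xi_q$ are chain homotopic on $\KhCx(L)$ via a ``basepoint sliding'' homotopy $h$ built from an arc of $L$ from $p$ to $q$. Writing $\KhCx(L, \p) = \Lambda_\p \otimes \KhCx(L)$ with differential $d_{\Kh} + \sum_r y_r \cdot \xi_r$, a change of variables $y_q' = y_q - y_p$ (replace $-$ by $+$ in characteristic $2$) combined with a twist by $h$ yields a $\Lambda_\p$-linear isomorphism to $\KhCx(L, \p \minus \{q\}) \otimes \Lambda_{\langle y_q' \rangle}$ in which $y_q'$ acts trivially. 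A bigrading check identifies $\Lambda_{\langle y_q' \rangle}$ with $V$, yielding (\ref{eq: Kh-doubling}) on homology.

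\emph{Part (3) — reduced bound.} For a single basepoint $p$, there is a standard short exact sequence $0 \to \rKhCx(L, p) \to \KhCx(L) \to \rKhCx(L, p) \to 0$ (with appropriate grading shifts) whose connecting map is $\xi_p$. Tensoring with $\Lambda_\p$ and iterating the mapping cone structure of $\KhCx(L, \p)$ produces a filtration whose associated graded is a direct sum of $2^{|\p|}$ copies of $\rKhCx(L, p)$, up to shifts; passing to homology gives (\ref{eq: Kh-reduced}). When $L$ is a knot, the inequality becomes an equality by a direct rank count using the decomposition $\Kh(L, \{p\}) \cong \rKh(L, p) \otimes V$ (which reflects the unreduced-versus-reduced splitting for knots) and the doubling formula from Part (2) applied $|\p|-1$ times.

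\emph{Part (4) — unlinks.} For an $l$-component unlink with one basepoint per component, $\KhCx(L)$ and each $\xi_p$ are completely explicit on a standard trivial diagram: generators are labels $x_1 \otimes \cdots \otimes x_l$ with $x_i \in \{1, X\}$, and $\xi_p$ multiplies the factor for the component containing $p$ by $X$. A direct computation identifies $\Kh(L, \p)$ with $\HFKtil(\hat \LL)$, both of rank $2^l$; the general case follows by invoking Part (2) together with its $\HFKtil$ counterpart (\ref{eq: HFK-doubling}). For the merge and split maps, both sides are elementary: the Khovanov maps come from the Frobenius algebra structure on $V$, while the $\HFKtil$ maps are computable from standard Heegaard diagrams for unlinks; agreement up to sign reduces to a short finite check. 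The main obstacle will be Part (1), specifically upgrading the chain-level homotopies $\phi \circ \xi_p \simeq \xi_p \circ \phi$ to a full $\Lambda_\p$-module chain homotopy equivalence with all higher coherences in place, which requires setting up $\KhCx(L, \p)$ inside a twisted-complex or multicone framework and tracking naturality of these higher structures through each Reidemeister move.
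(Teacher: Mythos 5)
Your plan for Part~(\ref{item: Kh-properties: invariance}) correctly identifies the sticking point---promoting the homotopies $\phi\circ\xi_p\simeq\xi_p\circ\phi$ to a coherent $\Lambda_\p$-equivariant equivalence---but the paper sidesteps this rather than solving it. The order of operations matters: one first proves invariance under sliding a single basepoint (by an explicit isomorphism built from the homotopy of Lemma~\ref{lemma:movepoint}), and this \emph{then} lets one assume the Reidemeister move is supported in a disk disjoint from $\p$. In that situation Lemma~\ref{lemma:dp-action-away-from-basepoints} shows that the standard Reidemeister chain homotopy equivalence commutes with every $\xi_p$ \emph{exactly}, not merely up to homotopy, because the circles that get canceled lie inside the Reidemeister disk and so contain no basepoint. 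Tensoring with $\Lambda_\p$ is then automatic, with no higher coherences to track. Your twisted-complex/multicone route is not wrong, but it is substantially heavier machinery, and you would need to actually produce and propagate the higher homotopies through each move, which you flag as an unresolved obstacle.

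Part~(\ref{item: Kh-properties: reduced}) has a genuine numerical gap. The filtration you describe---tensoring $\Lambda_\p$ against the short exact sequence relating $\KhCx(L)$ to two shifted copies of $\rKhCx(L,p)$---has associated graded consisting of $2^{\abs{\p}+1}$, not $2^{\abs{\p}}$, shifted copies of $\rKhCx(L,p)$ (since $\dim_\F\Lambda_\p=2^{\abs{\p}}$ and $\dim_\F\KhCx(L)=2\dim_\F\rKhCx(L,p)$), so the resulting spectral-sequence bound is off by a factor of $2$ from what the theorem asserts. The paper removes that factor via Lemma~\ref{lemma:Kh-reduced}: for $\abs{\p}=1$, the short exact sequence $0\to\Sigma^{(0,-1)}\rKhCx(L,p)\to\KhCx(L,\p)\to C\to 0$ actually \emph{splits} over any field, and the proof of this splitting invokes Khovanov's classification of $\KhCx(L;\F)$ as an $\F[\xi_p]/\xi_p^2$-module into a direct sum of free and two-step summands, writing down the splitting case by case. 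This structural input is exactly what you do not supply; your ``direct rank count using the decomposition $\Kh(L,\{p\})\cong\rKh(L,p)\otimes V$'' for the knot case presupposes the splitting you are trying to establish.

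Part~(\ref{item: Kh-properties: doubling}) is essentially the paper's argument, differing only in whether one first slides the two points to be adjacent (so $\xi_{p_0}=\xi_{p_1}$ on the nose, the paper's route) or carries the homotopy $h$ along. For Part~(\ref{item: Kh-properties: unlink}), the ``short finite check'' understates the content: the paper's Propositions~\ref{prop: Kh-unlink} and~\ref{prop: HFK-unlink} identify both sides with the explicit module $\Lambda_{\p,L}\otimes\Gamma_L$, and it is this module structure (not just ranks) that makes the agreement of merge and split maps in Propositions~\ref{prop: Kh-edgemaps} and~\ref{prop: HFK-edgemaps0} meaningful and verifiable. On the Floer side in particular, pinning down the $\Psi^p$-action with correct signs requires a nontrivial computation that makes essential use of the $\Omega^p$ maps, which your proposal does not mention.
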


\begin{remark}
Note that Conjecture \ref{conj: Kh-HFK} would follow if we could show that the rank inequality
\begin{equation} \label{eq: Kh-HFK-pointed}
\rank \Kh(L,\p;\F) \ge \rank \HFKtil(\hat\LL;\F).
\end{equation}
holds for any non-degenerate pointed link $\LL = (L,\p)$.
In particular, the power of $2$ in \eqref{eq: Kh-HFK} would be a consequence of \eqref{eq: HFK-unreduced} and \eqref{eq: Kh-reduced}.
\end{remark}

We now review the construction that is used to prove the various spectral sequences discussed above before seeing how it must be modified in the present setting.
\begin{definition}\label{defn:cubical-chain-complex}
Let $(C,\diff)$ be a chain complex equipped with a direct sum
decomposition along the vertices of a cube $\{0,1\}^n$,
\[
C=\bigoplus_{v\in\{0,1\}^n}C_v,
\]
so that the component of the differential from $C_u$ to $C_v$, call it
$\diff^{u,v}$, is non-zero only if $u\leq v$ with respect to the
product partial order on the cube $\{0,1\}^n$.  Then we say $C$ is a
\emph{chain complex with a cubical filtration} (or \emph{cubical complex} for short), where the filtration $\FF$ is the height in the cube; namely, $C_v$ lies in filtration
level $\card{v}$.

Associated to this filtration is a spectral sequence converging to
$H_*(C,\diff)$ in at most $n$ pages. The $(E_0,d_0)$ page is
\[
\bigoplus_{v\in\{0,1\}^n}(C_v,\diff^{v,v}).
\]
Therefore, the $E_1$ page is
\[
\bigoplus_{v\in\{0,1\}^n}H_*(C_v,\diff^{v,v})
\]
and the $d_1$ differential is a sum of terms corresponding to the edges of the cube, i.e. pairs $(u,v)$ with $u \le v$ and $\abs{v} = \abs{u}+1$; we denote this condition by $u \lessdot v$. The component of the $d_1$ differential from $H_*(C_u,\diff^{u,u})$ to $H_*(C_v,\diff^{v,v})$ is the map induced on homology by $\diff^{u,v}$.
\end{definition}

Let $L \subset S^3$ be a link presented by a diagram with $n$ crossings labeled $c_1, \dots, c_n$. For each $v \in \{0,1\}^n$, let $L_v$ denote the planar unlink obtained by taking the $v_i$ resolution of $c_i$, according to the rule in Figure \ref{fig: resolutions}, and let $l_v$ be the number of components in $L_v$. The Khovanov complex $\KhCx(L)$ and the reduced version $\rKhCx(L)$ (defined below in Section \ref{sec: khovanov}) are both cubical complexes in which the only nonzero differentials correspond to pairs $u \lessdot v$.

For a link invariant $A$ as discussed above, the key step for constructing a spectral sequence from $\rKh(L)$ to $A(L)$ is to iterate the construction of the skein sequence \eqref{eq: A-skein}. This produces a cubical complex $C = \bigoplus_{v \in \{0,1\}^n} C_v$ with homology isomorphic to $A(L)$, such that $H_*(C_v) = A(L_v) = \rKh(L_v)$ and the maps on homology induced by $\diff^{u,v}$ for $u \lessdot v$ agree with the edge maps in the Khovanov cube. In the spectral sequence associated to $C$, the $E_1$ page is then isomorphic (as a chain complex) to $\rKhCx(L)$, and therefore the $E_2$ page is isomorphic (as a group) to $\rKh(L)$.

In \cite{BaldwinLevineSpanning}, we (the first and second authors) showed how to iterate Manolescu's skein sequence in the same manner as with other invariants. Let $\LL = (L,\p)$ be an $l$-component pointed link represented by a diagram in which every edge contains a point of $\p$. Adapting the results of \cite{BaldwinLevineSpanning}, we obtain a cubical chain complex $(X,D)$ with
\begin{equation} \label{eq: H(X,D)-intro}
H_*(X,D) \cong \HFKtil(-\hat\LL;\Z_2),
\end{equation}
where $D$ counts holomorphic polygons in a Heegaard multi-diagram that encodes all the resolutions $\LL_v$. The $E_1$ page of the associated spectral sequence is
\[
\bigoplus_{v \in \{0,1\}^n} \HFKtil(-\hat\LL_v;\Z_2),
\]
with a completely explicit $d_1$ differential. While we originally hoped to prove that the $E_2$ page would be isomorphic to an appropriate multiple of $\rKh(L)$, leading to a proof of Conjecture \ref{conj: Kh-HFK}, this turns out to be false; indeed, the $E_2$ page is not even a link invariant. (See \cite[Remark 7.7]{BaldwinLevineSpanning}.) Thus, the analogy with previously known spectral sequences breaks down.

Instead, in this paper we introduce a new filtration on $X$ that makes use of the internal Alexander gradings on the summands of $X$. The differential $D^0$ in the associated graded complex of this filtration counts only some of the holomorphic polygons that contribute to $D$. The induced spectral sequence implies that
\begin{equation} \label{eq: G-spectral}
\rank H_*(X,D^0) \ge \rank H_*(X,D).
\end{equation}
Moreover, $(X,D^0)$ is also a cubical complex whose $E_1$ page agrees with that of $(X,D)$ as a group; however, the $d_1$ differentials differ.

We now state our main (and perhaps surprising) conjecture:
\begin{conjecture} \label{conj: H(X,D0)-intro}
The complex $(X,D^0)$ is quasi-isomorphic to $\KhCx(L,\p)$ via a map that 
respects the cubical filtrations. As a result,
\begin{equation} \label{eq: H(X,D0)-intro}
H_*(X, D^0) \cong \Kh(L,\p;\Z_2).
\end{equation}
\end{conjecture}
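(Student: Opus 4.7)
The approach I would take is to construct a filtered chain map $\Phi \from \KhCx(L,\p) \to (X, D^0)$ respecting the cubical filtrations and show directly that it is a quasi-isomorphism; then \eqref{eq: H(X,D0)-intro} follows. The map $\Phi$ would be assembled by induction along the cube $\{0,1\}^n$, specified by components $\Phi^{u,v} \from \KhCx(L,\p)_u \to X_v$ for $u \le v$, with $\Phi^{v,v}$ a vertex-level quasi-isomorphism and higher components chosen to make $\Phi$ an honest chain map.

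I would begin with the vertex identification. For each $v \in \{0,1\}^n$ the resolution $L_v$ is a planar unlink, so Theorem \ref{thm: Kh-properties}\eqref{item: Kh-properties: unlink} provides a canonical isomorphism $\Kh(L_v,\p) \cong \HFKtil(\hat\LL_v;\Z_2)$. The vertex summand $\KhCx(L,\p)_v = \Lambda_\p \otimes \KhCx(L)_v$, equipped with its internal differential coming from the basepoint action on the Khovanov homology of an unlink, computes $\Kh(L_v,\p)$; meanwhile $(X_v, D^{0,v,v})$ computes $\HFKtil(-\hat\LL_v;\Z_2)$. Choose chain-level quasi-isomorphisms $\Phi^{v,v}$ realizing these homology identifications.

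The core step is matching edge maps. For $u \lessdot v$, the component of $d_{\KhCx}$ is (up to the exterior algebra factor) the Khovanov cobordism map associated to an elementary merge or split of planar unlinks, while $D^{0,u,v}$ is a holomorphic triangle count in the multi-Heegaard diagram encoding the resolutions. Proposition \ref{prop: HFK-edgemaps0} asserts precisely that these edge maps agree under the vertex identifications (signs being irrelevant in characteristic two). Consequently the squares $\Phi^{v,v}\circ d_{\KhCx}^{u,v} + D^{0,u,v}\circ \Phi^{u,u}$ are nullhomotopic, and the nullhomotopies become the length-one components $\Phi^{u,v}$. Higher cube-components $\Phi^{u,v}$ for $|v|-|u| \ge 2$ are then produced inductively by solving the associated $A_\infty$-style coherence equations; the obstruction at each stage lies in a $\Hom$-complex whose cohomology is controlled by the vertex-level quasi-isomorphisms, and vanishes because the lower-order data already define a quasi-isomorphism of the associated graded.

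The main obstacle is twofold. First, one needs a precise description of which holomorphic polygons contribute to $D^0$ (versus the full $D$), and an explanation of why this selection rule reproduces the Khovanov basepoint action on each vertex complex; that is, the restriction defining $D^0$ must perform a job at the vertices that is \emph{geometric} on the Heegaard Floer side and purely \emph{algebraic} on the Khovanov side, and their coincidence is far from automatic. Second, the Heegaard Floer cube carries nontrivial higher components $D^{0,u,v}$ with $|v|-|u| \ge 2$ coming from polygon counts in the multi-diagram, whereas $\KhCx(L,\p)$ is strictly vertex-plus-edge; the inductive construction of $\Phi$ must absorb these higher contributions into its own higher cube-components, which requires controlling the relevant moduli-space degenerations compatibly with the Alexander-grading filtration that defines $D^0$. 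Establishing that this control is strong enough for the inductive obstructions to vanish is, I expect, where any serious attempt to prove the conjecture will stand or fall.
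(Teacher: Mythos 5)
This statement is Conjecture~\ref{conj: H(X,D0)-intro}: the paper does \emph{not} prove it, and explicitly says so. The authors present Theorem~\ref{thm: E1iso-intro} as ``the primary evidence'' for the conjecture, note that to prove it ``it would suffice to produce a filtered chain map $\Phi$'' inducing the $E_1$ isomorphism, and then state that they do not know how to construct such a map, since it ``would presumably need to fully understand $(X,D^0)$ on the chain level, which involve[s] difficult counts of holomorphic polygons.'' Your proposal does not close that gap.

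Your strategy does match the route sketched in the paper: vertex identifications via Propositions~\ref{prop: Kh-unlink} and~\ref{prop: HFK-unlink}, edge-level agreement via Propositions~\ref{prop: Kh-edgemaps} and~\ref{prop: HFK-edgemaps0}, and then an attempt to extend to higher cube components. The direction of $\Phi$ (you use $\KhCx(L,\p) \to (X,D^0)$, the paper uses the reverse) is immaterial over a field.

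The genuine flaw is in the sentence asserting that ``the obstruction at each stage lies in a $\Hom$-complex whose cohomology is controlled by the vertex-level quasi-isomorphisms, and vanishes because the lower-order data already define a quasi-isomorphism of the associated graded.'' This is not a valid inference. Having a quasi-isomorphism on the $E_0$ (or even $E_1$) page does \emph{not} imply that the edge-level nullhomotopies can be chosen coherently, nor that the face-level obstruction classes vanish; those obstructions live in $\Hom$-complexes whose relevant cohomology groups are generally nonzero, and computing them here would require exactly the chain-level control of $(X,D^0)$ --- i.e.\ the polygon counts $f^0_{v^0,\dots,v^k}$ for $k\geq 2$ --- that neither you nor the authors possess. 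Indeed, the paper cites \cite[Remark 7.7]{BaldwinLevineSpanning} to illustrate how delicate these cube comparisons are: the $E_2$ page of the unfiltered complex $(X,D)$ is not even a link invariant. The vanishing you assert is, in effect, the content of the conjecture itself. Your closing paragraph correctly flags this as ``where any serious attempt to prove the conjecture will stand or fall,'' but that honest caveat is inconsistent with the unqualified ``vanishes'' claim earlier; as written the argument presents the missing step as an established fact rather than as the open problem it is.
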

Observe that Conjecture \ref{conj: H(X,D0)-intro}, together with equations \eqref{eq: H(X,D)-intro}, \eqref{eq: G-spectral}, would immediately imply that Conjecture \ref{conj: Kh-HFK} holds over $\Z_2$, via equation \eqref{eq: Kh-HFK-pointed}. (See Conjecture \ref{conj: H(X,D0)} for a more precise version that incorporates gradings.)

The primary evidence for Conjecture \ref{conj: H(X,D0)-intro} is the following:
\begin{theorem} \label{thm: E1iso-intro}
The $E_1$ pages of the spectral sequences associated to the cubical filtrations on $(X, D^0)$ and $\KhCx(L,\p;\Z_2)$ are isomorphic chain complexes. Therefore, the $E_2$ pages are isomorphic as $\Z_2$--vector spaces.
\end{theorem}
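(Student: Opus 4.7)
The plan is to compare the two $E_1$ pages vertex-by-vertex and edge-by-edge, assembling local identifications into a global isomorphism of chain complexes. The key ingredient is Theorem \ref{thm: Kh-properties}(\ref{item: Kh-properties: unlink}), which already identifies $\Kh$ with $\HFKtil$ for planar unlinks and for their elementary merge/split maps.

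For each $v \in \{0,1\}^n$, the resolution $L_v$ is a planar unlink. On the Khovanov side, the $v$-summand of $\KhCx(L,\p;\Z_2)$ is $\Lambda_\p \otimes \KhCx(L_v;\Z_2)$ and, because the standard Khovanov differential strictly increases cube height, the diagonal component $\diff^{v,v}$ consists solely of the basepoint contributions from the mapping cone definition in Section \ref{subsec: mapping-cone-def}. Unpacking these, $(C_v, \diff^{v,v})$ is precisely $\KhCx(L_v,\p;\Z_2)$, so its homology is $\Kh(L_v,\p;\Z_2)$. On the Floer side, the diagonal differential $D^{v,v}$ preserves the Alexander grading when $L_v$ is an unlink (its $E_1$ computes $\HFKtil(-\hat\LL_v;\Z_2)$ to begin with), so the Alexander-filtered associated graded satisfies $D^{0,v,v} = D^{v,v}$ and $H_*(X_v, D^{0,v,v}) = \HFKtil(-\hat\LL_v;\Z_2)$. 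Theorem \ref{thm: Kh-properties}(\ref{item: Kh-properties: unlink}) then supplies a canonical isomorphism between these vertex summands.

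For each edge $u \lessdot v$, changing one resolution corresponds to an elementary merge or split cobordism between the planar unlinks $L_u$ and $L_v$. The component of $d_1$ on the Khovanov side is, by construction, the classical merge/split map on $\Kh$ twisted by the basepoint bookkeeping in $\Lambda_\p$. The component of $d_1$ on the Floer side is induced by $D^{0,u,v}$, i.e.\ the Alexander-grading-preserving part of the holomorphic polygon count $D^{u,v}$. The second half of Theorem \ref{thm: Kh-properties}(\ref{item: Kh-properties: unlink}) --- made precise in Proposition \ref{prop: HFK-edgemaps0} --- is exactly the assertion that these two maps agree under the vertex identifications of the previous step. Assembling the vertex isomorphisms with the edge-wise agreement yields a chain isomorphism of the $E_1$ pages; taking homology gives the $E_2$ identification as $\Z_2$--vector spaces.

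The main obstacle, and indeed the heart of the argument, lies in Proposition \ref{prop: HFK-edgemaps0}: one must verify that the Alexander-preserving part $D^{0,u,v}$ of the polygon map between two unlink diagrams reproduces the standard elementary merge/split map on $\HFKtil$. Concretely, this amounts to identifying the index-zero holomorphic triangles in a model Heegaard triple for a single resolution change that preserve the internal Alexander grading, and checking that their $\Z_2$-count agrees with the classical cobordism map. This is a local computation in a model diagram; everything else in the proof assembles formally once that local model is pinned down.
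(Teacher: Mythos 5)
Your proof is correct and takes essentially the same route as the paper's, which deduces Theorem~\ref{thm: E1iso-intro} directly from Propositions~\ref{prop: Kh-unlink}, \ref{prop: Kh-edgemaps}, \ref{prop: HFK-unlink}, and~\ref{prop: HFK-edgemaps0}: both $E_1$ pages are identified with $\bigoplus_{v} \Lambda_{L_v,\p}\otimes\Gamma_{L_v}$ and the $d_1$ edge maps are matched term by term. One clarification worth flagging: the edge component of $D^0$ along $u\lessdot v$ is the $\GG$-preserving piece $f^0_{u,v}$ (the triangle count with $\Theta^0_{u,v}$ alone), which for a merge \emph{drops} the internal Alexander grading by $1$ rather than preserving it, and which is \emph{not} the full Floer saddle cobordism map $f^0_{u,v}+f^1_{u,v}$; what Proposition~\ref{prop: HFK-edgemaps0} asserts is precisely that $f^0_{u,v}$ matches the Khovanov pointed merge/split map $\tilde\Delta_*$ or $\tilde\mu_*$ from Proposition~\ref{prop: Kh-edgemaps}, and this is what makes the argument work.
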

Roughly, this theorem follows from using item \eqref{item: Kh-properties: unlink} of Theorem \ref{thm: Kh-properties} to identify the edge maps in the two chain complexes. A careful proof is given in Section \ref{sec: cube}.

To prove Conjecture \ref{conj: H(X,D0)-intro}, it would suffice to produce a filtered chain map $\Phi$ from $(X,D^0)$ to $\KhCx(\LL;\Z_2)$ that induces the isomorphism of $E_1$ pages given by Theorem \ref{thm: E1iso-intro}. Standard facts about spectral sequences then imply that $\Phi$ induces isomorphisms on all subsequent pages. However, to write down such a map, one would presumably need to fully understand $(X,D^0)$ on the chain level, which involve difficult counts of holomorphic polygons.

At present, we do not know how to prove that the cube of resolutions for knot Floer homology holds with coefficients in $\Z$, due to technical issues regarding the orientations of moduli spaces of holomorphic polygons. Nevertheless, we work with $\Z$ coefficients for much of the paper --- most notably Propositions \ref{prop: Kh-unlink}, \ref{prop: Kh-edgemaps}, \ref{prop: HFK-unlink}, and \ref{prop: HFK-edgemaps0}, which together provide the identification described in item \eqref{item: Kh-properties: unlink} of Theorem \ref{thm: Kh-properties}. If the cube of resolutions can be proved over $\Z$, we anticipate that these results will easily imply the analogue of Theorem \ref{thm: E1iso-intro} in that setting.

\begin{remark} \label{rmk: odd}
It is instructive to consider a related link invariant, \emph{odd Khovanov homology}, defined by Ozsv\'ath, Rasmussen, and Szab\'o \cite{OSzR-kh-oddkhovanov}. Given a link $L$, one associates reduced and unreduced chain complexes $\KhCx'(L)$ and $\rKhCx{}'(L)$, whose $\Z_2$ reductions are the same as those of $\KhCx(L)$ and $\rKhCx(L)$, respectively. The homologies of these complexes are link invariants, denoted $\Kh'(L)$ and $\rKh{}'(L)$. These invariants can be very different from their ``even'' analogues $\Kh(L)$ and $\rKh(L)$. Most notably, the analogue of Conjecture \ref{conj: Kh-HFK} for odd Khovanov homology is false. For instance, if $K$ is the $(3,4)$ torus knot ($8_{19}$), then $\rank \rKh(K;\Q) = \rank \HFK(K;\Q) = 5$, while $\rank \rKh{}'(K;\Q) = 3$ \cite[Section 5]{OSzR-kh-oddkhovanov}.\footnote{There is no known knot for which $\rank \rKh{}'(K;\Q) > \rank \rKh(K;\Q)$; Shumakovitch has verified this for all knots through 16 crossings. However, many examples are known over finite fields; for instance, if $K = 9_{46}$, then $\rank \rKh(K;\Z_3) = 9$, while $\rank \rKh{}'(K;\Z_3)=11$ \cite{ShumakovitchPatterns}.} Although there are ways to define an odd analogue of $\Kh(L,\p)$ (see Remark \ref{rmk: roberts} below), one should not expect this to be related to an object in knot Floer homology.
\end{remark}

\subsection*{Acknowledgments} We are grateful to Matthew Hedden, Peter Ozsv\'ath, Alexander Shumakovitch, and Zolt\'an Szab\'o for helpful discussions.

\section{Khovanov homology for pointed links} \label{sec: khovanov}

In this section, we will do a brief review of the Khovanov chain
complex, describe the modified version for pointed links, and prove some of its properties. Unless otherwise mentioned, we will follow the following
conventions.
\begin{enumerate}[leftmargin=*]
\item All chain complexes are freely and finitely generated over $\Z$,
  and are \emph{bigraded}; the first grading is called the homological
  grading $\homgr$, and the second grading is called the quantum
  grading $\intgr$. The differential raises the bigrading by $(1,0)$.

\item For any two such bigraded chain complexes $A$ and $B$, the
  \emph{tensor product} $A\otimes B$ is a bigraded chain complex with following
  differential:
  \[
  \diff(a\otimes b)=\diff(a)\otimes b+(-1)^{\homgr(a)}a\otimes\diff(b).
  \]
\item For any such bigraded chain complex $C$, and for any
  $(r,s)\in\Z^2$, the \emph{shifted complex} $\Sigma^{(r,s)}C$ is the
  complex
  \(
  S^{(r,s)}\otimes C
  \)
  where $S^{(r,s)}$ is a single copy of $\Z$ supported in bigrading
  $(r,s)$. The differential on $\Sigma^{(r,s)} C$ is therefore equal to $(-1)^r$ times the differential on $C$.

\item
A homogeneous map $f \co C \to C'$ of bidegree $(r,s)$ should really be understood as a degree-0 map $f \co C \to \Sigma^{(-r,-s)} C'$. Thus, we call $f$ a \emph{chain map} if $f \circ \diff_C = \diff_{\Sigma^{(-r,-s)} C'} \circ f$, i.e., if $f \circ d_C = (-1)^r \diff_{C'} \circ f$. Likewise, a \emph{chain homotopy} $H$ between two such chain maps $f_1,f_2$ must satisfy
\begin{align}
\label{eq: homotopy-sign}
f_1 - f_2 &= H \circ \diff_C + \diff_{\Sigma^{(-r,-s)} C'} \circ H \\
\nonumber &= H \circ \diff_C + (-1)^r \diff_{C'} \circ H.
\end{align}

\end{enumerate}

\subsection{The Khovanov complex}
Let $L$ be an oriented, $l$-component link diagram in $\R^2$ with $n$ crossings $c_1,\dots,c_n$. The \emph{Kauffman cube of resolutions}
\cite{Kau-knot-resolutions} is constructed as follows: To a vertex
$v=(v_1,\dots,v_n)\in\{0,1\}^n$ of the cube, associate the complete
resolution $L_v$ of the link diagram $L$, by resolving the $i\th$
crossing $c_i$
$\vcenter{\hbox{\begin{tikzpicture}[scale=0.04]
\draw (0,10) -- (10,0);
\node[crossing] at (5,5) {};
\draw (0,0) -- (10,10);
\end{tikzpicture}}}$
by the \emph{$0$-resolution}
$\vcenter{\hbox{\begin{tikzpicture}[scale=0.04]
\draw (0,0) .. controls (4,4) and (4,6) .. (0,10);
\draw (10,0) .. controls (6,4) and (6,6) .. (10,10);
\end{tikzpicture}}}$
if $v_i=0$ or by the \emph{$1$-resolution}
$\vcenter{\hbox{\begin{tikzpicture}[scale=0.04]
\draw (0,0) .. controls (4,4) and (6,4) .. (10,0);
\draw (0,10) .. controls (4,6) and (6,6) .. (10,10);
\end{tikzpicture}}}$
otherwise, for all $1\leq i\leq n$. Let $l_v$ denote the number of components of $L_v$.

The \emph{Khovanov complex} $\KhCx(L)$ is obtained by applying a $(1+1)$-dimensional TQFT to this cube of resolutions \cite{Kho-kh-categorification}. Specifically, consider the two dimensional Frobenius algebra $\KhAlg=\Z[x]/x^2$ with comultiplication
$\KhAlg\to\KhAlg\otimes\KhAlg$ given by $1\mapsto 1\otimes x+x\otimes
1$ and $x\mapsto x\otimes x$. The chain group $\KhCx(L)$ is defined as the direct
sum
\[
\KhCx(L)=\bigoplus_{v\in\{0,1\}^n}\KhCx(L_v).
\]
where for each planar unlink $L_v$, the group $\KhCx(L_v)$ is a tensor product
of copies of $\KhAlg$, one copy for each component of the planar
unlink $L_v$. That is,
\[
\KhCx(L_v) = \bigotimes_{\pi_0(L_v)}\KhAlg \cong \Z[x_1,\dots,x_{l_v}]/(x_1^2,\dots,x_{l_v}^2),
\]
where $\pi_0(L_v)$ denotes the components of the planar unlink $L_v$;
the second isomorphism comes from numbering the components
$C_1,\dots,C_{l_v}$ and letting $\Z[x_i]/x_i^2$ denote the copy of
$\KhAlg$ corresponding to $C_i$.

We define several gradings on $\KhCx(L)$. The \emph{homological grading} $\homgr$ of the summand $\KhCx(L_v)$ is $\card{v}-n_-$, where $\card{v}$ denotes the \emph{height} of $v$ in
the cube, namely, its $L^1$-norm $\sum_i v_i$, and $n_-$ is the number
of negative crossings
$\vcenter{\hbox{\begin{tikzpicture}[scale=0.04]
\draw[->] (0,10) -- (10,0);
\node[crossing] at (5,5) {};
\draw[->] (0,0) -- (10,10);
\end{tikzpicture}}}$
in the link diagram $L$. The \emph{quantum grading} $\intgr$ is defined as follows: On $\KhAlg$, define $\intgr(1)=0$ and $\intgr(x)=-2$. On each summand $\KhCx(L_v) = \bigotimes_{\pi_0(L_v)}\KhAlg$, extend the grading
multiplicatively on the tensor product, and add the constant
$(l_v + \abs{v} + n - 3n_-)$. It is easy to verify that the
quantum grading has the same parity as the number of link components of $L$. Finally, the \emph{delta grading} $\deltagr$ is defined by $\deltagr = \homgr - \frac12 \intgr$; it takes values in $\Z + \frac{l}{2}$.

The \emph{differential} $\Khdiff\from\KhCx\to\KhCx$ is more
involved. There is a partial order on $\{0,1\}^n$ by declaring $u \leq
v$ if $u_i\leq v_i$ for all $1\leq i\leq n$. Write $u \iscovered v$ if
$u<v$ and $\card{v}-\card{u}=1$; such pairs correspond to the edges of
the cube. The Khovanov differential decomposes along the edges,
namely, the component $\Khdiff^{u,v}$ of $\Khdiff$ that goes from the
summand $\KhCx(L_u)$ to the summand $\KhCx(L_v)$ is non-zero only if
$u\iscovered v$.  For $u\iscovered v$, let $\bar{\imath} \in \{1,\dots,n\}$ be
the unique value such that $u_{\bar{\imath}}< v_{\bar{\imath}}$;
define the \emph{sign assignment}
\[s_{u,v}=\sum_{i=1}^{\bar{\imath}-1}v_i\pmod 2.\] The planar unlink
$L_v$ is gotten from the planar unlink $L_u$ either by \emph{merging}
two circles into one, or by \emph{splitting} one circle into two. The
component
\[
\Khdiff^{u,v}\from\bigotimes_{\pi_0(L_u)}\KhAlg\to \bigotimes_{\pi_0(L_v)}\KhAlg
\]
is defined to be $(-1)^{s_{u,v}+n_-}$ times the Frobenius
multiplication (respectively, comultiplication) map on the relevant
factors if $L_v$ is obtained from $L_u$ by a merge (respectively, a
split), extended by the identity map $\Id$ on the remaining
factors. It is straightforward to check that $\Khdiff$ is homogeneous of bigrading $(1,0)$ and satisfies $(\Khdiff)^2=0$. Khovanov proved that the (bigraded) chain homotopy type of $(\KhCx(L), \Khdiff)$ is an invariant of the underlying link, and not just the link diagram. Its homology is the \emph{Khovanov homology} $\Kh(L)$. The delta grading gives a decomposition
\[
\Kh(L) = \bigoplus_{\delta \in \Z + \frac{l}{2}} \Kh^\delta(L).
\]

\subsection{The basepoint actions} \label{subsec: Kh-action}
Now let us fix a \emph{checkerboard coloring} of the link-diagram $L$; for
concreteness, unless we explicitly declare otherwise, we will choose
the checkerboard coloring where the unbounded region is colored
white. We define the \emph{parity} of each edge $e$ of $L$: $e$ is
\emph{even} if the orientation of $e$ as the boundary of the adjacent
black region agrees with the orientation that $e$ inherits from that
of $L$, and \emph{odd} otherwise. Note that two edges on opposite
sides of a crossing must have opposite parity. For any point $p$ on
$L$ away from the crossings, let $\epsilon(p) \in \Z/2$ denote the
parity of the edge containing $p$.

For any point $p$ on $L$, away from the crossing, we define the
\emph{basepoint action} $\xi_p$ on $\KhCx(L)$ as follows. Place a
small planar unknot $U$ near $p$, disjoint from $L$. We may merge this
circle with $L$ to get a link diagram that is isotopic to $L$, with
the isotopy supported in a small neighborhood of $p$:
$\vcenter{\hbox{\begin{tikzpicture}[scale=0.04]
\draw (0,0) .. controls (4,4) and (4,6) .. (0,10);
\node[basepoint] (p) at (3.2,5) {};
\node[left=0pt of p] {\tiny $p$};
\draw (12,5) circle (5);
\node at (12,5) {\tiny $U$};
\end{tikzpicture}}}
\to
\vcenter{\hbox{\begin{tikzpicture}[scale=0.04]
\draw (0,0) .. controls (4,5) and (7,0) .. (12,0);
\draw (0,10) .. controls (4,5) and (7,10) .. (12,10);
\draw (12,0) arc (-90:90:5);
\end{tikzpicture}}}$.
Using the Frobenius multiplication, we thus have a merge map
\[
\mu\co \KhCx(L \amalg U) = \KhCx(L) \otimes \KhAlg \to \KhCx(L),
\]
and we define $\xi_p = (-1)^{\epsilon(p)} \mu(\cdot, x)$. Explicitly,
$\xi_p$ acts on each summand $\KhCx(L_v)$ by multiplication by
$(-1)^{\epsilon(p)} x$ in the tensor factor corresponding to the
component of $L_v$ that contains $p$, extended by the identity on the
other factors. Note that $\xi_p$ is a chain map with
$(\homgr,\intgr)$-grading $(0,-2)$, and it satisfies
$\xi_p^2=0$. Moreover, for any points $p,p'$, we have $\xi_p \circ
\xi_{p'} = \xi_{p'} \circ \xi_p$.

It follows that for $m$ basepoints $\p = \{p_1, \dots, p_m\}$ on
$L$, away from the crossings, $\KhCx(L)$ becomes a dg-module over the
ring
\[
D_{\p} = \Z[\xi_{p_1}, \dots, \xi_{p_m}]/(\xi_{p_1}^2, \dots, \xi_{p_m}^2).
\]
\begin{lemma}\label{lem:indep-of-checkerboard-coloring}
The two different checkerboard colorings for $L$ produce isomorphic
complexes over $D_{\p}$.
\end{lemma}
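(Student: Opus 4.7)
The plan is to observe that swapping the checkerboard coloring (so that the unbounded region becomes black and every other region swaps color) flips the parity $\epsilon(e)$ of every edge $e$ of $L$: the two regions adjacent to $e$ induce opposite boundary orientations on $e$, so interchanging which is ``black'' flips the parity. Consequently, if $\xi_p$ denotes the basepoint action for the original coloring and $\xi'_p$ denotes the action for the swapped one, then $\xi'_p = -\xi_p$ for every $p \in \p$, while the differential $\Khdiff$ is unchanged (its sign $(-1)^{s_{u,v}+n_-}$ is defined from the cube data alone). What remains is to exhibit a bigraded chain automorphism $\Phi\co \KhCx(L) \to \KhCx(L)$ satisfying $\Phi \circ \xi_p = -\xi_p \circ \Phi$ for every $p$; such $\Phi$ is then an isomorphism of dg $D_\p$-modules between the two complexes.

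To construct $\Phi$, I would define a $\Z$-linear map that acts by $\pm 1$ on each monomial generator. For $y = y_1 \otimes \cdots \otimes y_{l_v} \in \KhCx(L_v)$ with each $y_i \in \{1,x\}$, set
\[
\Phi(y) = (-1)^{g(v) + |y|_x}\, y,
\]
where $|y|_x = \#\{i : y_i = x\}$ and $g\co \{0,1\}^n \to \Z/2$ is a function chosen to satisfy $g(u)-g(v) \equiv 0\pmod 2$ at merge edges and $g(u)-g(v) \equiv 1\pmod 2$ at split edges. Such a $g$ exists: the quantity $|v|+l_v$ is unchanged across a merge edge and changes by $2$ across a split edge, so $g(v) = \lfloor (|v|+l_v)/2 \rfloor$ gives the required edge behavior. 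Moreover, every $2$-face of the cube contains exactly two merges and two splits (since along both diagonals of the face the total change in $l_v$ must agree), so the recipe is consistent around every $2$-face and $g$ is globally well-defined.

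The required compatibilities are now sign bookkeeping. At a merge edge $u \lessdot v$, Frobenius multiplication preserves $|y|_x$ on nonzero outputs and $g(u)\equiv g(v)$, so $\Phi$ commutes with the merge component of $\Khdiff$. At a split edge, comultiplication sends each nonzero summand to one with $|y|_x$ increased by $1$, while $g(u)\equiv g(v)+1$, so the two sign flips cancel; hence $\Phi \Khdiff = \Khdiff \Phi$. The action $\xi_p$ fixes $v$ and increases $|y|_x$ by $1$ on nonzero outputs, which yields $\Phi \xi_p = -\xi_p \Phi$. Since $\Phi$ is manifestly invertible (indeed $\Phi^2 = \Id$) and preserves the bigrading, it is the desired isomorphism. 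The main conceptual point---and the only genuinely non-trivial step---is the existence of $g$, which rests on the observation that merges and splits make opposite contributions to $l_v$ along an edge, so that $|v| + l_v$ is parity-invariant edgewise even though the two types of edges must be distinguished by $g$.
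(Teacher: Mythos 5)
Your proof is correct and constructs essentially the same automorphism as the paper: the paper defines $\Phi(y) = (-1)^{(\intgr(y)+l)/2}\,y$ and notes that it commutes with $\Khdiff$ (which preserves $\intgr$) and anticommutes with each $\xi_p$ (which drops $\intgr$ by $2$), and unpacking $(\intgr(y)+l)/2 \pmod 2$ for a monomial $y \in \KhCx(L_v)$ gives exactly your $g(v) + |y|_x$ up to a global constant sign. Your explicit construction of $g$ and the edgewise checks are a valid re-derivation of what the quantum grading packages automatically; the only mild inefficiency is verifying consistency of $g$ around $2$-faces, which is moot once you give the closed formula $g(v)=\lfloor(|v|+l_v)/2\rfloor$.
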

\begin{proof}
  The quantum grading $\intgr$ comes to our aid. Define a chain map
  $\KhCx(L)\to\KhCx(L)$ as $x\mapsto (-1)^{(\intgr(x)+|L|)/2}x$. This
  is an automorphism of $\KhCx(L)$, and it intertwines the two
  different module structures of $\KhCx(L)$ over $D_{\p}$ coming from
  the two different checkerboard colorings.
\end{proof}

\begin{lemma}\label{lemma:dp-action-away-from-basepoints}
  Assume two pointed link diagrams $(L,\p)$ and $(L,\p')$ are related
  by a Reidemeister move away from the basepoints $\p$. That is,
  assume there is a disk $\mathbb{D}^2\subset\R^2$ not containing any
  of the basepoints of $\p$, so that $L$ and $L'$ agree outside
  $\mathbb{D}^2$, and differ inside $\mathbb{D}^2$ as one of the
  following:
  \[
  \vcenter{\hbox{\begin{tikzpicture}[scale=0.07]
    \draw[dotted] (0,0) circle (10);
    \clip (0,0) circle (10);
    \draw (-10,-10) -- (-5,0) .. controls (0,10) and (5,5) .. (5,0);
    \node[crossing] at (-5,0) {};
    \draw (-10,10) -- (-5,0) .. controls (0,-10) and (5,-5) .. (5,0);
  \end{tikzpicture}}}
  \leftrightarrow
  \vcenter{\hbox{\begin{tikzpicture}[scale=0.07]
    \draw[dotted] (0,0) circle (10);
    \clip (0,0) circle (10);
    \draw (-10,-10) .. controls (-5,0) .. (-10,10);
  \end{tikzpicture}}}
  \qquad\qquad
  \vcenter{\hbox{\begin{tikzpicture}[scale=0.07]
    \draw[dotted] (0,0) circle (10);
    \clip (0,0) circle (10);
    \draw (-10,-10) -- (-5,0) .. controls (-2,6) and (2,6) .. (5,0) -- (10,-10);
    \node[crossing] at (-5,0) {};
    \node[crossing] at (5,0) {};
    \draw (-10,10) -- (-5,0) .. controls (-2,-6) and (2,-6) .. (5,0) -- (10,10);
  \end{tikzpicture}}}
  \leftrightarrow
  \vcenter{\hbox{\begin{tikzpicture}[scale=0.07]
    \draw[dotted] (0,0) circle (10);
    \clip (0,0) circle (10);
    \draw (-10,-10) .. controls (-5,0) and (5,0) .. (10,-10);
    \draw (-10,10) .. controls (-5,0) and (5,0) .. (10,10);
  \end{tikzpicture}}}
  \qquad\qquad
  \vcenter{\hbox{\begin{tikzpicture}[scale=0.07]
    \draw[dotted] (0,0) circle (10);
    \clip (0,0) circle (10);
    \draw (-10,-10) -- (10,10);
    \node[crossing] at (0,0) {};
    \draw (-10,10) -- (10,-10);
    \node[crossing] at (-4.5,-4.5) {};
    \node[crossing] at (4.5,-4.5) {};
    \draw (-10,0) .. controls (-5,0) and (-7,-2) .. (-4.5,-4.5)
    .. controls (-2,-7) and (2,-7)  ..
    (4.5,-4.5) .. controls (7,-2) and (5,0) .. (10,0);
  \end{tikzpicture}}}
  \leftrightarrow
  \vcenter{\hbox{\begin{tikzpicture}[scale=0.07]
    \draw[dotted] (0,0) circle (10);
    \clip (0,0) circle (10);
    \draw (-10,-10) -- (10,10);
    \node[crossing] at (0,0) {};
    \draw (-10,10) -- (10,-10);
    \node[crossing] at (-4.5,4.5) {};
    \node[crossing] at (4.5,4.5) {};
    \draw (-10,0) .. controls (-5,0) and (-7,2) .. (-4.5,4.5)
    .. controls (-2,7) and (2,7)  ..
    (4.5,4.5) .. controls (7,2) and (5,0) .. (10,0);
  \end{tikzpicture}}}
  \]
  Then Khovanov's chain homotopy equivalence between $\KhCx(L)$ and
  $\KhCx(L')$  (as constructed in~\cite{Kho-kh-categorification}) is a
  chain homotopy equivalence over $D_\p$.
\end{lemma}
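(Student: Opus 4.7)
The plan is to exploit the \emph{locality} of Khovanov's chain homotopy equivalences: every chain map and every chain homotopy appearing in the proof of invariance under the Reidemeister move inside $\mathbb{D}^2$ can be written as a sum of compositions of Frobenius (co)multiplication maps supported on circles contained entirely in $\mathbb{D}^2$, tensored with the identity on the $\KhAlg$--factors of all circles that meet the complement of $\mathbb{D}^2$. On the other hand, $\xi_p$ acts nontrivially only on the $\KhAlg$--factor of the (unique) circle of the resolution that contains $p$, which by hypothesis lies outside $\mathbb{D}^2$; so the two kinds of operations act on disjoint tensor factors and will commute.

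To make this precise, first write $L = T_0 \cup T$ and $L' = T_0 \cup T'$, where $T_0 = L \cap (\R^2 \minus \mathbb{D}^2) = L' \cap (\R^2 \minus \mathbb{D}^2)$. Index a vertex of the Kauffman cube of $L$ (respectively $L'$) by a pair $(v_{\mathrm{out}}, v_{\mathrm{in}})$; Khovanov's chain homotopy equivalence has matrix entries only between vertices with the same $v_{\mathrm{out}}$. For each fixed $v_{\mathrm{out}}$, I would inspect the explicit formulas in \cite{Kho-kh-categorification} for the R1, R2, and R3 chain maps and chain homotopies and verify that every summand is a tensor product of Frobenius maps on circles entirely inside $\mathbb{D}^2$ with the identity on every circle that meets the complement. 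The circle $C_p(v) \subset L_v$ containing $p$ always belongs to the latter class.

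With this locality established, the remainder is formal. The basepoint action $\xi_p$ multiplies by $(-1)^{\epsilon(p)} x$ in the $C_p(v)$--factor of $\KhCx(L_v)$ and acts by the identity on the other tensor factors. Since Khovanov's chain maps and chain homotopies preserve the $C_p(v)$--factor on the nose, and the sign $(-1)^{\epsilon(p)}$ depends only on the orientation of $L$ and the checkerboard coloring, both of which are unchanged outside $\mathbb{D}^2$, $\xi_p$ commutes with all of Khovanov's maps. The same argument applies to each $\xi_{p_i}$, so the Reidemeister maps are $D_\p$--linear and Khovanov's chain homotopy equivalence is therefore a chain homotopy equivalence of dg-modules over $D_\p$.

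The main obstacle is the case-by-case verification of the locality claim for the R3 chain homotopy, which is the most involved of Khovanov's three homotopy equivalences: it is assembled from several saddle-type maps and their compositions, and one must check that none of them introduces a nontrivial action on a circle that exits $\mathbb{D}^2$. This is essentially a bookkeeping exercise, but it is precisely the step at which the hypothesis that $\mathbb{D}^2$ is disjoint from $\p$ is used.
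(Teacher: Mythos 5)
Your conclusion is correct but the key locality claim, as stated, is false, and this is a genuine gap. You assert that every summand of Khovanov's Reidemeister chain maps and homotopies is ``a tensor product of Frobenius maps on circles entirely inside $\mathbb{D}^2$ with the identity on every circle that meets the complement.'' This is not so: the maps for R1, R2, and R3 all contain Frobenius merge/split operations that act nontrivially on circles exiting $\mathbb{D}^2$. For instance, the $\mathrm{R1}$ equivalence involves merging the small new circle with a circle passing through $\mathbb{D}^2$ on its way out; and the correction terms in the $\mathrm{R2}$ and $\mathrm{R3}$ equivalences are built from edge differentials of the Khovanov cube, which are comultiplication/multiplication maps on circles that in general leave $\mathbb{D}^2$ and may even contain basepoints. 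So the ``disjoint tensor factors'' argument does not apply to those summands, and the purported case-by-case verification would fail at the first correction term.

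The correct statement one needs is weaker and of a different flavor: $\xi_p$, being left multiplication by $\pm x$ in the factor of the circle containing $p$, commutes with \emph{every} merge and split (this is the fact that $x$-multiplication is a map of $\mathbf{A}$-modules and $\mathbf{A}$ is commutative), not merely those on circles disjoint from the one through $p$. The paper instead organizes the argument via the cancellation/Gaussian-elimination description of Khovanov's equivalences: each step cancels an acyclic piece corresponding to a circle contained in $\mathbb{D}^2$ being merged with label $1$ or split off with label $x$, and since $\xi_p$ does not alter the label on those circles (as they carry no basepoints), the cancellations are $D_\p$-equivariant. Either of these observations fills the gap, but your proposal as written is missing one of them, and the locality claim that is supposed to replace them does not hold.
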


\begin{proof}
The fact that the maps $\xi_p$ commute with the chain homotopy equivalences associated to Reidemeister moves is stated in \cite[Section 3]{Kho-kh-patterns}. Roughly, the homotopy equivalence between $\KhCx(L)$ and $\KhCx(L')$ is obtained by a sequence of ``cancellations'' of acyclic subcomplexes corresponding to the following local pictures:
  \begin{enumerate}[leftmargin=*]
  \item Merging a circle labeled $1$:
    $\vcenter{\hbox{\begin{tikzpicture}[scale=0.04]
      \draw (0,0) .. controls (4,4) and (4,6) .. (0,10);
      \draw (12,5) circle (5);
      \node at (12,5) {\tiny $1$};
    \end{tikzpicture}}}\to
      \vcenter{\hbox{\begin{tikzpicture}[scale=0.04]
        \draw (0,0) .. controls (4,5) and (7,0) .. (12,0);
        \draw (0,10) .. controls (4,5) and (7,10) .. (12,10);
        \draw (12,0) arc (-90:90:5);
    \end{tikzpicture}}}$.
  \item Splitting off a circle labeled $x$:
    $      \vcenter{\hbox{\begin{tikzpicture}[scale=0.04]
        \draw (0,0) .. controls (4,5) and (7,0) .. (12,0);
        \draw (0,10) .. controls (4,5) and (7,10) .. (12,10);
        \draw (12,0) arc (-90:90:5);
    \end{tikzpicture}}}\to\vcenter{\hbox{\begin{tikzpicture}[scale=0.04]
      \draw (0,0) .. controls (4,4) and (4,6) .. (0,10);
      \draw (12,5) circle (5);
      \node at (12,5) {\tiny $x$};
    \end{tikzpicture}}}$.
  \end{enumerate}
  These circles that are being merged or split off lie entirely within
  the disk $\mathbb{D}^2$; in fact, they are the following four
  circles
 \[
  \vcenter{\hbox{\begin{tikzpicture}[scale=0.07]
    \draw[dotted] (0,0) circle (10);
    \clip (0,0) circle (10);
    \draw[ultra thin] (-10,-10) -- (-5,0) .. controls (0,10) and (5,5) .. (5,0);
    \node[crossing] at (-5,0) {};
    \draw[ultra thin] (-10,10) -- (-5,0) .. controls (0,-10) and (5,-5) .. (5,0);
    \draw[thick] (-4,0) .. controls (-4,1.5) .. (-2.5,4) .. controls (0,8) and (5,5) .. (5,0);
    \draw[thick] (-4,0) .. controls (-4,-1.5) .. (-2.5,-4) .. controls (0,-8) and (5,-5) .. (5,0);
  \end{tikzpicture}}}
  \qquad\qquad
  \vcenter{\hbox{\begin{tikzpicture}[scale=0.07]
    \draw[dotted] (0,0) circle (10);
    \clip (0,0) circle (10);
    \draw[ultra thin] (-10,-10) -- (-5,0) .. controls (-2,6) and (2,6) .. (5,0) -- (10,-10);
    \node[crossing] at (-5,0) {};
    \node[crossing] at (5,0) {};
    \draw[ultra thin] (-10,10) -- (-5,0) .. controls (-2,-6) and (2,-6) .. (5,0) -- (10,10);
    \draw[thick] (-4,0) .. controls (-4,6) and (4,6) .. (4,0);
    \draw[thick] (-4,0) .. controls (-4,-6) and (4,-6) .. (4,0);
  \end{tikzpicture}}}
  \qquad\qquad
  \vcenter{\hbox{\begin{tikzpicture}[scale=0.07]
    \draw[dotted] (0,0) circle (10);
    \clip (0,0) circle (10);
    \draw[ultra thin] (-10,-10) -- (10,10);
    \node[crossing] at (0,0) {};
    \draw[ultra thin] (-10,10) -- (10,-10);
    \node[crossing] at (-4.5,-4.5) {};
    \node[crossing] at (4.5,-4.5) {};
    \draw[ultra thin] (-10,0) .. controls (-5,0) and (-7,-2) .. (-4.5,-4.5)
    .. controls (-2,-7) and (2,-7)  ..
    (4.5,-4.5) .. controls (7,-2) and (5,0) .. (10,0);
    \draw[thick] (-3.5,-5) .. controls (-2,-7) and (2,-7) .. (3.5,-5);
    \draw[thick] (-3,-3) .. controls (0,0) and (0,0) .. (3,-3);
    \draw[thick] (3,-3) .. controls (4,-4) .. (3.5,-5);
    \draw[thick] (-3,-3) .. controls (-4,-4) .. (-3.5,-5);
  \end{tikzpicture}}}
  \qquad\qquad
  \vcenter{\hbox{\begin{tikzpicture}[scale=0.07]
    \draw[dotted] (0,0) circle (10);
    \clip (0,0) circle (10);
    \draw[ultra thin] (-10,-10) -- (10,10);
    \node[crossing] at (0,0) {};
    \draw[ultra thin] (-10,10) -- (10,-10);
    \node[crossing] at (-4.5,4.5) {};
    \node[crossing] at (4.5,4.5) {};
    \draw[ultra thin] (-10,0) .. controls (-5,0) and (-7,2) .. (-4.5,4.5)
    .. controls (-2,7) and (2,7)  ..
    (4.5,4.5) .. controls (7,2) and (5,0) .. (10,0);
    \draw[thick] (-3.5,5) .. controls (-2,7) and (2,7) .. (3.5,5);
    \draw[thick] (-3,3) .. controls (0,0) and (0,0) .. (3,3);
    \draw[thick] (3,3) .. controls (4,4) .. (3.5,5);
    \draw[thick] (-3,3) .. controls (-4,4) .. (-3.5,5);
  \end{tikzpicture}}}
  \]
  appearing in the left pictures for the Reidemeister I and II moves,
  and in both pictures for the Reidemeister III move.  (See, e.g.,  \cite{Bar-kh-khovanovs}.) Consequently,
  they do not contain any of the basepoints from $\p$, and therefore,
  the chain homotopy equivalences work over $D_\p$.
\end{proof}

\begin{lemma} \label{lemma:movepoint} If $p_1$ and $p_2$ are two
  basepoints in $\p$ that lie on the same component of $L$, then
  $\xi_{p_1}$ and $\xi_{p_2}$ are chain homotopic via a
  $(-1,-2)$-graded chain homotopy $H_{\alpha}$, depending only on the
  choice of an arc $\alpha$ connecting $p_1$ and $p_2$ on the
  underlying link; and the homotopy $H_{\alpha}$ commutes with
  $\xi_{p}$ for all $p$ in $\p$.
\end{lemma}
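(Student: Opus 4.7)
My plan is to exhibit $H_\alpha$ as an explicit sum of local contributions, one for each crossing that $\alpha$ traverses, and verify the chain homotopy equation by a crossing-by-crossing calculation. As a base case, if $\alpha$ lies within a single edge of the diagram, then $\epsilon(p_1)=\epsilon(p_2)$, and in every resolution $L_v$ the points $p_1,p_2$ lie on the same component, so $\xi_{p_1}=\xi_{p_2}$ on the nose and we take $H_\alpha=0$. In general, I choose auxiliary points $q_0=p_1, q_1, \dots, q_{k-1}, q_k=p_2$ on $\alpha$ so that each sub-arc $\alpha_j$ from $q_{j-1}$ to $q_j$ passes through exactly one crossing $c_{i_j}$; if I can construct a chain homotopy $H_{\alpha_j}$ between $\xi_{q_{j-1}}$ and $\xi_{q_j}$ for each $j$, then $H_\alpha:=\sum_j H_{\alpha_j}$ is the desired homotopy by telescoping.

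It therefore suffices to treat a sub-arc $\alpha_j$ passing through a single crossing $c$. I propose to make $H_{\alpha_j}$ supported on the edges of the cube in the direction of $c$: for each $v$ with $v_c=1$, letting $u$ be $v$ with $v_c$ changed to $0$, the component $H_{\alpha_j}^{v,u}\co\KhCx(L_v)\to\KhCx(L_u)$ is set to be $(-1)^{\epsilon(q_{j-1})}(-1)^{s_{u,v}+n_-}$ times the \emph{reverse saddle cobordism} at $c$, namely the elementary cobordism inducing a comultiplication $\Delta$ if $\Khdiff^{u,v}$ is a multiplication $\mu$ at $c$, or vice versa, extended by the identity on the other Frobenius factors. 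Since $\epsilon(q_{j-1})$ is constant along each edge of the diagram, $H_{\alpha_j}$ depends only on $\alpha_j$. Comparing the additive grading shifts used to define $\intgr$ shows that $H_{\alpha_j}$ has bigrading $(-1,-2)$.

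To check the homotopy equation on a fixed summand $\KhCx(L_v)$, the diagonal part of $(\Khdiff H_{\alpha_j}+H_{\alpha_j}\Khdiff)|_{v\to v}$ has a single contribution, coming from the cube edge at $c$: either $\mu\circ\Delta$ (when $v_c=1$) or $\Delta\circ\mu$ (when $v_c=0$) on the tensor factors corresponding to the local arcs at $c$. Direct computation gives $\mu\circ\Delta(1)=2x$, $\mu\circ\Delta(x)=0$, and $\Delta\circ\mu(a\otimes b)=ax\otimes b+a\otimes bx$; combined with the sign $(-1)^{\epsilon(q_{j-1})}$ and the parity flip $\epsilon(q_j)=\epsilon(q_{j-1})+1$ across $c$, this reproduces $(\xi_{q_{j-1}}-\xi_{q_j})|_v$ exactly, in both the merge and the split cases. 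Off-diagonal components $v\to v'$ arise only when $v,v'$ differ in exactly two cube coordinates including $c$; the two potentially nonzero contributions $\Khdiff^{u,v'}\circ H_{\alpha_j}^{v,u}$ and $H_{\alpha_j}^{w,v'}\circ\Khdiff^{v,w}$ (with $u$ the meet and $w$ the join of $v,v'$ in the cube) realize the same geometric composition of a saddle at the other crossing with the reverse saddle at $c$, and a direct check of the signs $(-1)^{s_{\cdot,\cdot}+n_-}$ shows they appear with opposite overall sign and cancel, exactly as in the proof that $(\Khdiff)^2=0$.

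Finally, $H_\alpha$ commutes with $\xi_p$ for every $p\in\p$ because each $\xi_p$ acts by (signed) multiplication by $x$ on a single tensor factor, and both $\mu$ and $\Delta$ commute with multiplication by $x$ on every factor, being module maps over the Frobenius algebra $\KhAlg$. The main obstacle in carrying out this plan will be the sign bookkeeping: the signs from $\epsilon(q_j)$, the Khovanov sign assignment $s_{u,v}$, and the additive quantum-grading shifts for merges and splits all have to align correctly for the diagonal terms to reproduce $\xi_{q_{j-1}}-\xi_{q_j}$ and for the off-diagonal terms to cancel.
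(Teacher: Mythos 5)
Your plan is essentially the paper's proof: reduce to a single crossing by a telescoping sum, define the homotopy as a signed copy of the reverse saddle on each cube edge in the $c$-direction (from the $1$-resolution down to the $0$-resolution), check that the diagonal terms of $\Khdiff H_{\alpha_j}+H_{\alpha_j}\Khdiff$ reproduce $\xi_{q_{j-1}}-\xi_{q_j}$, and observe that the off-diagonal terms cancel by the square identity $s_{u,u'}+s_{u',v'}+s_{u,v}+s_{v,v'}\equiv 1$ that also underlies $\Khdiff^2=0$. The paper writes its homotopy $H_c$ with sign $(-1)^{s_{v,u}}$ and separates off the global $(-1)^{\epsilon}$ factor (identifying $\xi_{p_1}-\xi_{p_2}$ with $\pm f$ for an unsigned local map $f$), whereas you absorb $(-1)^{\epsilon(q_{j-1})}$ directly into $H_{\alpha_j}$; this is only a bookkeeping difference. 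The closing observation, that $H_\alpha$ commutes with every $\xi_p$ because $\mu$ and $\Delta$ are $\KhAlg$-bimodule maps, is also the right reason.

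One slip to correct: you assert that the diagonal composition is $\mu\circ\Delta$ when $v_c=1$ and $\Delta\circ\mu$ when $v_c=0$. The type of the composition is not governed by $v_c$; rather, one obtains $\mu\circ\Delta$ precisely when the two local arcs at $c$ lie on the \emph{same} circle of $L_v$, and $\Delta\circ\mu$ when they lie on \emph{different} circles, and either can occur for either value of $v_c$ depending on the rest of the resolution. This is exactly the case split the paper makes in defining the map $f$. Since you do go on to compute both compositions and both give the unsigned version of $\xi_{q_{j-1}}-\xi_{q_j}$ in the corresponding case, the argument is complete once this case split is restated correctly.
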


\begin{proof}
  This is essentially \cite[Proposition 2.2]{HN-kh-unlink-detection},
  but with signs. It is enough to construct the homotopy when $p_1$
  and $p_2$ are on the two sides of a single crossing; in the general
  case, given an arc $\alpha$ from $p_1$ to $p_2$, we may move $p_1$
  to $p_2$ along $\alpha$, one crossing at a time, and add the
  homotopies from the single crossing case.

  So assume $p_1$ and $p_2$ are on the opposite sides of a single
  crossing. There are two cases: they could be on the overpass
$\vcenter{\hbox{\begin{tikzpicture}[xscale=0.04, yscale=0.04]
\draw (0,10) -- (10,0);
\node[crossing] at (5,5) {};
\draw (0,0) -- (10,10);
\node[basepoint] at (0.5,0.5) {};
\node[basepoint] at (9.5,9.5) {};
\end{tikzpicture}}}$
  or the underpass
$\vcenter{\hbox{\begin{tikzpicture}[scale=0.04]
\draw (0,10) -- (10,0);
\node[crossing] at (5,5) {};
\draw (0,0) -- (10,10);
\node[basepoint] at (0.5,9.5) {};
\node[basepoint] at (9.5,0.5) {};
\end{tikzpicture}}}$.
The same homotopy works for both cases, so we will tackle the two
cases simultaneously. Assume we have four basepoints
$p_1,q_1,p_2,q_2$ arranged around a crossing $c$ as follows:
\[
\begin{tikzpicture}[scale=0.08]
\draw (0,10) -- (10,0);
\node[crossing] at (5,5) {};
\draw (0,0) -- (10,10);
\node[basepoint] (p1) at (0.5,0.5) {};
\node[basepoint] (p2) at (9.5,9.5) {};
\node[basepoint] (q1) at (9.5,0.5) {};
\node[basepoint] (q2) at (0.5,9.5) {};
\node[left=0pt of p1] {$p_1$};
\node[right=0pt of q1] {$q_1$.};
\node[right=0pt of p2] {$p_2$};
\node[left=0pt of q2] {$q_2$};
\end{tikzpicture}
\]
Each of the maps $(\xi_{p_1}-\xi_{p_2})$ and $(\xi_{q_1}-\xi_{q_2})$
agree, up to signs, with the following map
$f\from\KhCx(L)\to\KhCx(L)$.
\begin{enumerate}
\item If in the complete resolution, the two arcs near $c$ are in the
  same component,
  \[
  \vcenter{\hbox{
      \begin{tikzpicture}[scale=0.04]
        \draw (0,0) .. controls (4,4) and (4,6) .. (0,10);
        \draw (10,0) .. controls (6,4) and (6,6) .. (10,10);
        \draw[dotted,thick] (0,10) .. controls (-2,16) and (12,16) .. (10,10);
        \draw[dotted,thick] (0,0) .. controls (-2,-6) and (12,-6) .. (10,0);
      \end{tikzpicture}}}
  \text{ or }
  \vcenter{\hbox{
      \begin{tikzpicture}[scale=0.04]
        \draw (0,0) .. controls (4,4) and (6,4) .. (10,0);
        \draw (0,10) .. controls (4,6) and (6,6) .. (10,10);
        \draw[dotted,thick] (0,10) .. controls (-6,12) and (-6,-2) .. (0,0);
        \draw[dotted,thick] (10,10) .. controls (16,12) and (16,-2) .. (10,0);
      \end{tikzpicture}}}
  \]
  then $f$ is the map $\Z[x]/x^2\to\Z[x]/x^2$
  \[
  1\mapsto 2x\qquad x\mapsto 0,
  \]
  extended by identity on the other factors.
\item If in the complete resolution, the two arcs near $c$ are in
  different components,
  \[
  \vcenter{\hbox{
      \begin{tikzpicture}[scale=0.04]
        \draw (0,0) .. controls (4,4) and (4,6) .. (0,10);
        \draw (10,0) .. controls (6,4) and (6,6) .. (10,10);
        \draw[dotted,thick] (0,10) .. controls (-6,12) and (-6,-2) .. (0,0);
        \draw[dotted,thick] (10,10) .. controls (16,12) and (16,-2) .. (10,0);
      \end{tikzpicture}}}
  \text{ or }
  \vcenter{\hbox{
      \begin{tikzpicture}[scale=0.04]
        \draw (0,0) .. controls (4,4) and (6,4) .. (10,0);
        \draw (0,10) .. controls (4,6) and (6,6) .. (10,10);
        \draw[dotted,thick] (0,10) .. controls (-2,16) and (12,16) .. (10,10);
        \draw[dotted,thick] (0,0) .. controls (-2,-6) and (12,-6) .. (10,0);
      \end{tikzpicture}}}
  \]
  then $f$ is the map $\Z[x_1,x_2]/(x_1^2,x_2^2)\to\Z[x_1,x_2]/(x_1^2,x_2^2)$
  \[
  1\mapsto x_1+x_2\qquad x_1\mapsto x_1x_2\qquad x_2\mapsto
  x_1x_2\qquad x_1x_2\mapsto 0,
  \]
  extended by identity on the other factors.
\end{enumerate}

Define the homotopy $H_c\from\KhCx(L)\to\KhCx(L)$ summand-wise. The
component $H_c^{u,v}$ from the summand $\KhCx(L_u)$ to the summand
$\KhCx(L_v)$ is non-zero only if $L_u$ is obtained from $L_v$ by
changing the resolution at $c$ from $0$ to $1$ (therefore, in
particular, $u\covers v$); and in that case, $H_c^{u,v}$ is defined to
be $(-1)^{s_{v,u}}$ times the Frobenius multiplication (respectively,
comultiplication) map on the relevant factors if $L_v$ is obtained
from $L_u$ by a merge (respectively, a split), extended by the
identity map $\Id$ on the remaining factors. It is fairly
straightforward to check that
\[
\Khdiff H_c+H_c\Khdiff=f.\qedhere
\]
\end{proof}

\begin{remark}
  In Lemma~\ref{lemma:movepoint}, if $p_1=p_2$ and the arc $\alpha$
  connecting $p_1$ and $p_2$ on $L$ is chosen to be
  the entire link component that contains $p_1$, then the
  corresponding homotopy satisfies
  \[
  \Khdiff H_\alpha+H_\alpha\Khdiff=\xi_{p_1}-\xi_{p_2}=0.
  \]
  Therefore, the map $(-1)^{\homgr(\cdot)}H_{\alpha}(\cdot)$ is a
  chain map $\KhCx(L)\to\KhCx(L)$. This is closely related to the
  Batson--Seed higher differential on
  $\KhCx(L)$~\cite{batson-seed-2015}.
\end{remark}

\begin{remark}
Note that Lemma \ref{lemma:movepoint} does not imply that the chain homotopy type of $\KhCx(L)$ as a $D_\p$--module is invariant under isotopies of $\p$ along $L$. We do not know whether this stronger form of invariance holds except in the case where $\p$ consists of a single point $p$, where it was proved by Khovanov \cite{Kho-kh-patterns} as follows. Instead of moving a strand past $p$, one may perform a sequence of Reidemeister moves avoiding $p$ together with moving strands around the point at infinity. The latter move changes the checkerboard coloring; however, by
Lemma~\ref{lem:indep-of-checkerboard-coloring}, this does not affect the isomorphism type of $\KhCx(L)$ over $D_{\p}$. However, this proof does not generalize to the case where $\abs{\p}>1$.
\end{remark}



Given a point $p_0 \in L$, the \emph{reduced Khovanov complex}
$\rKhCx(L,p_0)$ is defined to be the kernel of $\xi_{p_0}$, with the
$(\homgr,\intgr)$-grading shifted by $(0,1)$:
\[
\rKhCx(L,p_0)=\Sigma^{(0,1)}\ker(\xi_{p_0}).
\]
This complex is (canonically) isomorphic to the cokernel of $\xi_{p_0}$, with the bigrading shifted by $(0,-1)$. The chain homotopy type of $\rKhCx(L,p_0)$ is an invariant of the underlying pointed link, and its homology is denoted
$\rKh(L,p_0)$, or simply $\rKh(L)$ if $p_0$ is understood from
context. Just like the unreduced homology, the reduced homology decomposes according to $\delta$ gradings:
\[
\rKh(L) = \bigoplus_{\delta \in \Z + \frac{l+1}{2}} \rKh{}^\delta(L).
\]
Note that for any collection $\p$ of basepoints on $L$, the
basepoint actions restrict to give an action of $D_\p$ on
$\rKhCx(L,p_0)$. If $p\in\p$ is on the same component of $L$ as $p_0$,
then by Lemma~\ref{lemma:movepoint}, the restriction of $\xi_p$ to
$\rKhCx(L,p_0)$ is nulhomotopic.

\subsection{The Khovanov complex for a pointed link}\label{subsec: mapping-cone-def}


Given a pointed link $(L,\p)$ with $n$ crossings and $m$ basepoints,
let $\Lambda_\p=\Lambda^*(y_p \mid p \in \p)$ be the exterior algebra
on generators $\{y_p \mid p \in \p\}$. Define the
$(\homgr,\intgr)$-bigrading of each $y_p$ to be $(1,2)$; define the
bigrading on $\Lambda_\p$ by extending this multiplicatively. (It follows that $\Lambda_\p$ is supported in delta grading $0$.) We define
\[
\KhCx(L,\p) = \Lambda_\p\otimes\KhCx(L),
\]
equipped with the differential
\begin{equation} \label{eq: Kh-diff} \Khdiff(a\otimes
  b)=(-1)^{\homgr(a)}a \otimes \Khdiff(b) +\sum_{p \in \p} (y_p\wedge a)\otimes\xi_p(b),
\end{equation}
which has bigrading $(1,0)$, and hence shifts the delta grading by $1$.
We call $\KhCx(L,\p)$ the \emph{Khovanov complex} of the pointed link
$(L,\p)$. It is the Koszul complex (iterated mapping cone) of the maps
$\{\xi_p \mid p \in \p\}$. When $\p=\emptyset$, the complex is simply
the Khovanov complex of $L$.

Note that $\KhCx(L,\p)$ carries a left action by $\Lambda_\p$, given
by $y_p(a\otimes b)=(y_p\wedge a)\otimes b$, since
\begin{align*}
&\phantom{==}\Khdiff(y_p(a\otimes b))+y_p(\Khdiff(a\otimes b))\\
&=\big((-1)^{1+\homgr(a)}(y_p\wedge a)\otimes\Khdiff(b)+\sum_{q \in
  \p} (y_q\wedge y_p\wedge a)\otimes\xi_q(b)\big)\\
&\qquad\qquad+\big((-1)^{\homgr(a)}(y_p\wedge a) \otimes \Khdiff(b) +\sum_{q \in \p} (y_p\wedge
  y_q\wedge a)\otimes\xi_q(b)\big)\\
&=0.
\end{align*}


In more explicit terms, the complex $\KhCx(L,\p)$ can be viewed in
relation to the two cubes $\{0,1\}^m$ and $\{0,1\}^n$ as
follows. Assume the points of $\p$ are labeled $\{p_1, \dots,
p_m\}$. For $u\in\{0,1\}^m$, let $1\leq i_1 <\dots < i_{\card{u}} \leq
m$ be the indices so that $u_{i_j}=1$ for all $1\leq j\leq \card{u}$;
and let $y_u$ denote the monomial
$y_{p_{i_{1}}}\wedge\dots\wedge y_{p_{i_{\card{u}}}} \in \Lambda_\p$. As
a group, $\KhCx(L,\p)$ is a direct sum of groups along the vertices
$(u,v)\in\{0,1\}^m\times\{0,1\}^n$: the chain group over $(u,v)$ is
$y_u\otimes\KhCx(L_v)$. The differential is along the edges of the
cube $\{0,1\}^m\times\{0,1\}^n$: if $v'\covers v$, the component for
the differential from $(u,v)$ to $(u,v')$ is
$(-1)^{\card{u}}\Khdiff^{v,v'}$, namely, $(-1)^{\card{u}}$ times the
component of the Khovanov differential from $\KhCx(L_{v})$ to
$\KhCx(L_{v'})$; and if $u'\covers u$, the component for the
differential from $(u,v)$ to $(u',v)$ is the map
\[
y_u\otimes b \mapsto
(-1)^{s_{u,u'}}y_{u'}\otimes\xi_{p_{\bar\imath}}(b),
\]
where $\bar\imath\in\{1,\dots,m\}$ is the unique value so that $u'_{\bar\imath}>u_{\bar\imath}$.

Note, $\KhCx(L,\p)$ restricted to the sub-cube $\{u\}\times\{0,1\}^n$
is simply the Khovanov complex $\KhCx(L)$, up to a shift of the
bigradings. On the other hand, $\KhCx(L,\p)$ restricted to the
sub-cube $\{0,1\}^m\times\{v\}$ is the Khovanov complex
$\KhCx(L_v,\p)$ of the planar unlink $L_v$ marked with basepoints
$\p$, up to a shift of bigradings and changes in the signs of the
differentials: There is a global change
$(-1)^{\card{v}+n_-}$ in the sign coming from the shift, and there are
local sign changes in the $\xi_p$--actions since (irrespective of how
one orients $L_v$) the parity of the edge in $L$ containing $p$ could
be different from the parity of the edge in $L_v$ containing $p$. We
will discuss this more in Section~\ref{subsec: kh-cube-filtration}.

\begin{example}
Let $(K,\p)$ denote the single-crossing unknot
\[
\begin{tikzpicture}[scale=0.08]
\draw (0,0) .. controls (3,0) and (7,10) .. (10,10);
\node[crossing] at (5,5) {};
\draw[-open triangle 60] (0,10) .. controls (3,10) and (7,0) .. (10,0);
\draw (0,10) arc (90:270:5);
\node[basepoint] (p1) at (-5,5) {};
\node[basepoint] (p2) at (15,5) {};
\node[left=0pt of p1] {$p_1$};
\node[right=0pt of p2]  {$p_2$};
\draw (10,0) arc (-90:90:5);
\end{tikzpicture}
\]
with two marked points $p_1, p_2$, oriented as shown (so that $p_1$ is
on an odd edge, and $p_2$ is on an even edge). Let $C$ denote the
single circle in the $0$-resolution of $K$; and let $C_1,C_2$ denote
the two circles in the $1$-resolution of $K$ with $p_i\in C_i$ for
$i\in\{1,2\}$. Write $\KhCx(C)=\Z[x]/x^2$, and
$\KhCx(C_i)=\Z[x_i]/x_i^2$ for $i\in\{1,2\}.$ The Khovanov complex
$\KhCx(K)$ is
\[
\xymatrix@C=20ex{
\Z[x]/x^2\ar[r]^-{1\mapsto -(x_1+x_2),\,\, x\mapsto -x_1x_2}&\Z[x_1,x_2]/(x_1^2,x_2^2);
}
\]
and the Khovanov complex $\KhCx(K,\p)$ is

\def\transxa{-5}\def\transya{-7}
\def\transxb{5}\def\transyb{-3.5}
\def\transxc{15}
\[
\scriptsize
\begin{tikzpicture}[scale=0.5]

\node (y00x0) at (0,0) {$y_{00}\Z[x]/x^2$};
\node (y10x0) at (\transxa,\transya) {$y_{10}\Z[x]/x^2$};
\node (y01x0) at (\transxb,\transyb) {$y_{01}\Z[x]/x^2$};
\node (y11x0) at (\transxa+\transxb,\transya+\transyb) {$y_{11}\Z[x]/x^2$};
\node (y00x1) at (\transxc,0) {$y_{00}\Z[x_1,x_2]/(x_1^2,x_2^2)$};
\node (y10x1) at (\transxa+\transxc,\transya) {$y_{10}\Z[x_1,x_2]/(x_1^2,x_2^2)$};
\node (y01x1) at (\transxb+\transxc,\transyb) {$y_{01}\Z[x_1,x_2]/(x_1^2,x_2^2)$};
\node (y11x1) at (\transxa+\transxb+\transxc,\transya+\transyb) {$y_{11}\Z[x_1,x_2]/(x_1^2,x_2^2)$};

\draw[->] (y00x0) -- (y00x1) node[midway,anchor=south] {\tiny $y_{00}\mapsto
  -y_{00}(x_1+x_2)$}
node[midway,anchor=north] {\tiny $y_{00}x\mapsto -y_{00}x_1x_2$};
\draw[->] (y00x0) -- (y10x0) node[midway,anchor=east] {\tiny
  $y_{00}\mapsto -y_{10}x $};
\draw[->] (y00x0) -- (y01x0) node[midway,anchor=west] {\tiny
  $y_{00}\mapsto y_{01}x $};
\draw[->] (y01x0) -- (y01x1) node[pos=0.3,anchor=south] {\tiny $y_{01}\mapsto
  y_{01}(x_1+x_2)$}
node[pos=0.3,anchor=north] {\tiny $y_{01}x\mapsto y_{01}x_1x_2$};
\draw[->] (y01x0) -- (y11x0) node[pos=0.2,anchor=east] {\tiny
  $y_{01}\mapsto -y_{11}x $};
\draw[->] (y00x1) -- (y01x1) node[pos=0.6,anchor=south
west,align=left,inner sep=0pt] {\tiny
  $y_{00}\mapsto y_{01}x_2$\\$y_{00}x_1\mapsto y_{01}x_1x_2$};
\draw[->] (y01x1) -- (y11x1) node[midway,anchor=north
west,align=left,inner sep=0pt] {\tiny
  $y_{01}\mapsto -y_{11}x_1$\\$y_{01}x_2\mapsto -y_{11}x_1x_2$};
\draw[->] (y10x1) -- (y11x1) node[pos=0.1,anchor=north east,align=right,inner sep=0pt] {\tiny
  $y_{10}\mapsto -y_{11}x_2$\\$y_{10}x_1\mapsto -y_{11}x_1x_2$};
\draw[->] (y10x0) -- (y11x0) node[midway,anchor=east] {\tiny
  $y_{10}\mapsto -y_{11}x $};
\draw[->] (y11x0) -- (y11x1) node[pos=0.3,anchor=south] {\tiny $y_{11}\mapsto
  -y_{11}(x_1+x_2)$}
node[pos=0.3,anchor=north] {\tiny $y_{11}x\mapsto -y_{11}x_1x_2$};

\draw[line width=8pt, white] (y10x0) -- (y10x1);
\draw[->] (y10x0) -- (y10x1) node[midway,anchor=south,fill=white,outer
sep=1pt] {\tiny $y_{10}\mapsto
  y_{10}(x_1+x_2)$}
node[midway,anchor=north,fill=white,outer
sep=1pt] {\tiny $y_{10}x\mapsto y_{10}x_1x_2$};
\draw[line width=8pt, white] (y00x1) -- (y10x1);
\draw[->] (y00x1) -- (y10x1) node[pos=0.6,anchor=north
west,align=left, inner sep=0pt] {\tiny
  $y_{00}\mapsto -y_{10}x_1$\\$y_{00}x_2\mapsto -y_{10}x_1x_2$};

\end{tikzpicture}
\]
 (Here, we have suppressed the tensor product notation, and written
 $ab$ instead of $a\otimes b$.)  The homology is four-dimensional,
 generated by
 \begin{align*}
   y_{00}x_2&\in\Lambda_\p\Z[x_1,x_2]/(x_1^2,x_2^2)&\text{in grading $(0,-1)$,}\\
   y_{10}x_2&\in\Lambda_\p\Z[x_1,x_2]/(x_1^2,x_2^2)&\text{in grading $(1,1)$,}\\
   -y_{11}+(y_{10}+y_{01})&\in\Lambda_\p\Z[x]/x^2\oplus
   \Lambda_\p\Z[x_1,x_2]/(x_1^2,x_2^2)&\text{in grading $(1,3)$,}\\
   y_{11}&\in\Lambda_\p\Z[x_1,x_2]/(x_1^2,x_2^2)&\text{in grading $(2,5)$.}
 \end{align*}
\end{example}

\begin{remark}
  To recover $\KhCx(L)$ from $\KhCx(L,\p)$, we can simply take the
  kernel of all the $y_p$ actions, and shift gradings by
  $(-m,-2m)$. That is,
  \[
  \KhCx(L)\cong \Sigma^{(-m,-2m)}\big(\bigcap_{p\in\p}\ker(y_p)\big).
  \]
  Alternatively, it is cokernel of all the $y_p$ actions:
  \[
  \KhCx(L)\cong \KhCx(L,\p)/\big(\sum_{p\in\p}\im(y_p)\big).
  \]
\end{remark}

\begin{remark} \label{rmk: roberts}
The Roberts--Jaeger twisted Khovanov complex \cite{RobertsTwisted, JaegerTwisted} is closely related to the above mapping cone construction. Roberts and Jaeger work with the reduced complex with coefficients in $\Z_2$. If we carry out their construction in the unreduced case with formal variables, then (up to a bigrading shift) their complex would be
\[
\Z_2[Y_{p_1},\dots,Y_{p_m}] \otimes \KhCx(L;\Z_2)
\]
with differential given by
\[
a\otimes b\mapsto a\otimes \Khdiff(b)+\sum_{p\in\p}Y_pa\otimes\xi_p(b).
\]
In particular, our $\KhCx(L,\p;\Z_2)$ is obtained from the Roberts--Jaeger construction by setting $Y_{p_i}^2=0$ for each $i$.

In~\cite{ManionSign}, Manion gave a sign-refinement of the Roberts--Jaeger complex to work over $\Z$; the underlying group is
\[
\Z[Y_{p_1},\dots,Y_{p_m}] \otimes \KhCx'(L;\Z).
\]
Manion's construction requires using the odd Khovanov complex; his arguments do not go through for the even version. One could again set $Y_{p_i}^2= 0$ to recover an odd analogue $\Kh'(L,\p)$ of our construction. However, as noted in Remark \ref{rmk: odd}, one should not expect this complex to be related to knot Floer homology. Indeed, as we shall see, the fact that $\Kh(L,\p)$ is a module over the exterior algebra $\Lambda_\p$ rather than $\Z[Y_{p_1},\dots,Y_{p_m}]/(Y_{p_1}^2,\dots,Y_{p_m}^2)$ is essential for seeing the similarity with knot Floer homology, specifically statement  \ref{item: Kh-properties: unlink} of Theorem \ref{thm: Kh-properties}.
\end{remark}

\subsection{Invariance and basic properties} \label{subsec: kh-cube-invariance}
In this section, we will prove the invariance of $\KhCx(L,\p)$, and
describe a few of its properties. Together, these results prove statements \ref{item: Kh-properties: invariance}, \ref{item: Kh-properties: doubling}, and \ref{item: Kh-properties: reduced} of Theorem \ref{thm: Kh-properties}.

First, we show that $\Kh(L,\p)$ is indeed a pointed link invariant.

\begin{proposition}\label{prop:main-invariance}
  The chain homotopy type of $\KhCx(L,\p)$ over $\Lambda_\p$ is an
  invariant of the isotopy type of $(L,\p)$.
\end{proposition}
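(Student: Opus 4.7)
The plan is to decompose any ambient isotopy of the pair $(L,\p)$ into a sequence of elementary moves of two types: (a) a Reidemeister move on $L$ performed inside a disk $\mathbb{D}^2 \subset \R^2$ disjoint from $\p$, and (b) sliding a single basepoint $p \in \p$ to a nearby point $p'$ along an arc $\alpha \subset L$, which may cross under or over a crossing of $L$. By general position we may arrange that basepoints avoid crossings, and any isotopy of $(L,\p)$ is a composition of such moves. Hence it suffices to produce, for each elementary move, a chain homotopy equivalence of $\Lambda_\p$--modules between the associated Khovanov complexes.

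For type (a), Lemma~\ref{lemma:dp-action-away-from-basepoints} already supplies a chain homotopy equivalence $\phi \from \KhCx(L) \to \KhCx(L')$ in the category of $D_\p$--modules, meaning that $\phi$, a chain homotopy inverse $\psi$, and the relevant homotopies all commute on the nose with each of the endomorphisms $\xi_p$. The key observation is that the Koszul construction $C \mapsto \Lambda_\p \otimes C$ with the twisted differential \eqref{eq: Kh-diff} is a functor from the category of chain complexes equipped with $m$ commuting square-zero endomorphisms (morphisms being chain maps commuting strictly with all $\xi_p$) to the category of dg $\Lambda_\p$--modules, and it carries homotopies of morphisms to homotopies. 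Applying this functor to $\phi$, $\psi$, and the homotopies yields the required equivalence $\KhCx(L,\p) \simeq \KhCx(L',\p)$ over $\Lambda_\p$.

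For type (b), I write $\p = \p_0 \sqcup \{p\}$ and $\p' = \p_0 \sqcup \{p'\}$, identify $\Lambda_\p \cong \Lambda_{\p'}$ via $y_p \leftrightarrow y_{p'}$, and use Lemma~\ref{lemma:movepoint} to obtain a $(-1,-2)$--graded map $H_\alpha \from \KhCx(L) \to \KhCx(L)$ satisfying $\Khdiff H_\alpha + H_\alpha \Khdiff = \xi_p - \xi_{p'}$ and commuting with each $\xi_q$ for $q \in \p_0$. I then define a $\Lambda_\p$--linear map
\[
\Psi \from \KhCx(L,\p) \too \KhCx(L,\p'), \qquad \Psi(a \otimes b) = a \otimes b + (-1)^{\card{a}} (y_p \wedge a) \otimes H_\alpha(b),
\]
which is manifestly an isomorphism of $\Lambda_\p$--modules (its inverse is obtained by flipping the sign of the second summand, since $y_p^2 = 0$). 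A direct computation, using the homotopy relation above together with $H_\alpha \xi_q = \xi_q H_\alpha$ and the antisymmetry $y_p \wedge y_q = - y_q \wedge y_p$, will show that $\Psi$ intertwines the Koszul differentials of $\KhCx(L,\p)$ and $\KhCx(L,\p')$. Distinct choices of $\alpha$ may give distinct $\Psi$, but the chain homotopy type of the output is independent of these choices.

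The main obstacle will be sign bookkeeping: choosing signs consistent with the conventions of \eqref{eq: Kh-diff} and \eqref{eq: homotopy-sign} so that $\Psi D = D' \Psi$ holds on the nose. Once the conventions are pinned down, the verification reduces to exactly the two identities supplied by Lemma~\ref{lemma:movepoint}, with all other contributions cancelling by the antisymmetry of the wedge product.
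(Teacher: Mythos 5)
Your proposal follows essentially the same two-step decomposition as the paper's proof: Reidemeister moves away from the basepoints handled via Lemma~\ref{lemma:dp-action-away-from-basepoints} together with the (functorial) passage from $D_\p$--equivariant equivalences to $\Lambda_\p$--module equivalences of the Koszul complexes, and basepoint-sliding moves handled via Lemma~\ref{lemma:movepoint} with an explicit chain isomorphism of the form $a\otimes b \mapsto a\otimes b + (-1)^{\card{a}}(y_p\wedge a)\otimes H_\alpha(b)$ --- the same formula the paper uses (written there as $a'\otimes b + (a'\wedge y_{p'})\otimes H(b)$), up to the sign conventions you flag but defer. The paper carries out the sign verification explicitly, which is the only substantive step you leave to be checked.
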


\begin{proof}
  We have to check invariance under the three Reidemeister moves, and
  an additional move for moving a marked point past a crossing.\footnote{In light of Thomas Jaeger's clever handling of marked points in \cite{JaegerTwisted}, the first author refers to the four moves together as the ``Jaegermeister moves.''} Let us start with the basepoint move.

  Assume $(L,\p)$ and $(L,\p')$ are two pointed link diagrams
  differing only at a single basepoint on some link component; specifically, suppose $\p = \p'' \cup \{p\}$ and $\p' = \p'' \cup \{p'\}$. Since $p$ and $p'$ lie on the
  same component, by Lemma~\ref{lemma:movepoint} the actions $\xi_p$
  and $\xi_{p'}$ are chain homotopic via a homotopy that commutes with
  $\xi_q$ for all $q$; specifically, assume
  $H\from\KhCx(L)\to\KhCx(L)$ is the homotopy satisfying
  \[
  \Khdiff H+H\Khdiff=\xi_{p'}-\xi_{p}.
  \]

  For $a\in\Lambda_\p$, let $a'\in\Lambda_{\p'}$ be the corresponding
  element obtained by changing all occurrences of $y_p$ to $y_{p'}$.
  Then define the chain isomorphism $f\from\KhCx(L,\p)\to\KhCx(L,\p')$ to be
  \[
  f(a\otimes b)=a'\otimes b+(a'\wedge y_{p'})\otimes H(b)
  \]
  It is easy to see that $f$ commutes
  with the action of $\Lambda_{\p''}$, and intertwines the actions by
  $y_p$ and $y_{p'}$. It is also easy to check that $f$ is indeed a
  chain map:
  \begin{align*}
  &\phantom{==}f\Khdiff(a\otimes b)-\Khdiff f(a\otimes b)\\
  &=f\big((-1)^{\homgr(a)}a\otimes\Khdiff(b)+\sum_{q\in\p}(y_q\wedge
  a)\otimes \xi_q(b)\big)-\Khdiff\big(a'\otimes b+(a'\wedge y_{p'})\otimes H(b)\big)\\
  &=\big((-1)^{\homgr(a)}a'\otimes\Khdiff(b)+(-1)^{\homgr(a)}(a'\wedge
  y_{p'})\otimes H\Khdiff(b)+\sum_{q\in\p}(y'_q\wedge
  a')\otimes\xi_{q}(b)\\
  &\qquad{}+\sum_{q\in\p}(y'_q\wedge a'\wedge y_{p'})\otimes
  H\xi_q(b)\big)-\big(
  (-1)^{\homgr(a')}a'\otimes\Khdiff(b)+\sum_{q\in\p'}(y_q\wedge
  a')\otimes\xi_{q}(b)\\
&\qquad{}+(-1)^{\homgr(a')+1}(a'\wedge y_{p'})\otimes\Khdiff
H(b)+\sum_{q\in\p'}(y_q\wedge a'\wedge y_{p'})\otimes\xi_q H(b)
  \big)\\
  &=(y_{p'}\wedge a')\otimes (H\Khdiff+\Khdiff H)(b)+(y_{p'}\wedge
  a')\otimes\xi_{p}(b)-(y_{p'}\wedge a')\otimes\xi_{p'}(b)=0.
  \end{align*}



  Now we only have to check invariance under the Reidemeister
  moves. Thanks to the basepoint move, we may assume that the
  Reidemeister moves happen away from the basepoints; that is, we may
  assume that the two pointed link diagrams $(L,\p)$ and $(L',\p)$
  agree outside some disk $\mathbb{D}^2\subset\R^2$, differ by some
  Reidemeister moves inside $\mathbb{D}^2$, and the disk contains none
  of the basepoints of $\p$.

By Lemma~\ref{lemma:dp-action-away-from-basepoints}, Khovanov's chain
homotopy equivalence between $\KhCx(L)$ and $\KhCx(L')$ respects the
$D_{\p}$--action, and hence induces a chain homotopy equivalence between
$\KhCx(L,\p)$ and $\KhCx(L',\p)$ over $\Lambda_{\p}$.
\end{proof}

Next we study the dependence of $\KhCx(L,\p)$ on the basepoints $\p$.
\begin{lemma} \label{lemma: Kh-samecomponent} Let $(L,\p)$ be a
  pointed link diagram, and suppose that $\p$ contains two points
  $p_0, p_1$ that are on the same component of $L$. Let \[\p' = \p
  \minus \{p_0\}\,\,\textrm{ and }\,\,\p'' = \p
  \minus \{p_0,p_1\}.\] Then $\KhCx(L,\p)$ is isomorphic to
  $V\otimes\KhCx(L,\p')$, where $V$ is a direct sum of two copies of
  $\Z$, in bigradings $(0,0)$ and $(1,2)$. Furthermore, the
  isomorphism holds over $\Lambda_{\p'}$: it acts by restriction of
  scalars on $\KhCx(L,\p)$; and on $V$, $\Lambda_{\p''}$ acts
  trivially, and $y_{p_1}$ sends the copy of $\Z$ in grading $(0,0)$ to
  the copy of $\Z$ in grading $(1,2)$ by the identity map. In
  particular, over any field $\F$,
  \[
  \rank\Kh^\delta(L,\p;\F)=2\rank\Kh^\delta(L,\p';\F).
  \]
\end{lemma}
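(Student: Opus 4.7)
The plan is to identify $\KhCx(L,\p)$ with the mapping cone of $\tilde\xi_{p_0}\co\KhCx(L,\p')\to\Sigma^{(1,2)}\KhCx(L,\p')$, where $\tilde\xi_{p_0}$ extends $\xi_{p_0}$ by acting only on the $\KhCx(L)$ tensor factor; this identification is immediate from the cube description of the differential~\eqref{eq: Kh-diff}, separating out the $y_{p_0}$ direction in $\Lambda_\p = \Lambda_{\p'}\otimes\Lambda_{\{p_0\}}$. To prove the asserted splitting, the goal is to produce a $\Lambda_{\p''}$-equivariant chain nulhomotopy $K_0$ of $\tilde\xi_{p_0}$ on $\KhCx(L,\p')$, since trivializing the structure map of a cone splits it as a direct sum of shifted copies.

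The construction of $K_0$ combines a geometric and an algebraic homotopy. On the geometric side, since $p_0$ and $p_1$ lie on a common component of $L$, Lemma~\ref{lemma:movepoint} supplies a homotopy $H=H_\alpha$ with $\Khdiff H + H\Khdiff = \xi_{p_1}-\xi_{p_0}$ that commutes with every $\xi_q$ for $q\in\p$. Its extension $\tilde H(a\otimes b) = (-1)^{\homgr(a)} a\otimes H(b)$ satisfies $d'\tilde H + \tilde H d' = \tilde\xi_{p_1}-\tilde\xi_{p_0}$ on $\KhCx(L,\p')$; the sign $(-1)^{\homgr(a)}$ is chosen precisely so the cross terms involving the other $\xi_q$'s cancel. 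On the algebraic side, the cube decomposition
\[
\KhCx(L,\p') = \KhCx(L,\p'') \oplus y_{p_1}\wedge\KhCx(L,\p''),
\]
which displays $\KhCx(L,\p')$ as a mapping cone of $\tilde\xi_{p_1}$, yields a canonical ``$y_{p_1}$-projection'' operator $K(\gamma_0 + y_{p_1}\wedge\gamma_1) = \gamma_1$ of bidegree $(-1,-2)$, and a direct matrix computation gives $d'K + Kd' = \tilde\xi_{p_1}$. Setting $K_0 := K - \tilde H$ then produces the desired $\Lambda_{\p''}$-equivariant nulhomotopy $d' K_0 + K_0 d' = \tilde\xi_{p_0}$.

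The isomorphism itself is then
\[
\Phi(v_+ \otimes c) = c + y_{p_0} \wedge K_0(c), \qquad \Phi(v_- \otimes c) = -y_{p_0}\wedge c,
\]
which is visibly triangular with unit diagonal entries, hence an isomorphism of abelian groups. The chain-map condition is equivalent to the nulhomotopy equation, and $\Lambda_{\p''}$-equivariance follows from the graded commutation of $K$ and $\tilde H$ with wedging by elements of $\Lambda_{\p''}$. The only subtle point is $y_{p_1}$-equivariance (where $y_{p_1}$ acts diagonally on the tensor product, sending $v_+$ to $v_-$ and acting by wedging on $\KhCx(L,\p')$), which reduces to the identity $K_0(y_{p_1}\wedge c) + y_{p_1}\wedge K_0(c) = c$; this is straightforward to check, since $K$ contributes $c$ on the right-hand side while $\tilde H$ contributes $0$ thanks to the sign $(-1)^{\homgr(a)}$ in its definition. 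The main obstacle is the meticulous sign-bookkeeping among the various bigradings, the tilded extensions, and the mapping-cone differentials; no single step is hard, but the signs must all line up simultaneously. Once the chain-level isomorphism is in place, the rank statement is immediate: $V$ is supported in $\deltagr = 0$, so tensoring with $V$ doubles the rank in each $\deltagr$-level over any field $\F$.
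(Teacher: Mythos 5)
Your proposal is correct, and it takes a genuinely different route from the paper's. The paper first invokes Proposition~\ref{prop:main-invariance} to move $p_0$ next to $p_1$, so that $\xi_{p_0}=\xi_{p_1}$ on the nose; then $\KhCx(L,\p)$ is a double mapping cone over $\KhCx(L,\p'')$ whose vertical structure map $\pm\xi_{p_0}$ can be replaced by $\pm(\xi_{p_0}-\xi_{p_1})=0$ via a simple shear $(a,b,c,d)\mapsto(a,b,c-b,d)$, from which the splitting drops out. You instead stay in the original diagram and split the cone of $\tilde\xi_{p_0}\colon\KhCx(L,\p')\to\Sigma^{(1,2)}\KhCx(L,\p')$ by producing a nulhomotopy $K_0=K-\tilde H$, combining the geometric homotopy $H_\alpha$ of Lemma~\ref{lemma:movepoint} with the purely algebraic ``$y_{p_1}$-contraction'' $K$ built into the Koszul structure of $\KhCx(L,\p')$. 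Both steps of your construction check out: the sign $(-1)^{\homgr(a)}$ in $\tilde H$ does kill the cross-terms with the $\xi_q$'s because $H_\alpha$ commutes with them, and the verification $d'K+Kd'=\tilde\xi_{p_1}$ and the identity $K_0(y_{p_1}\wedge c)+y_{p_1}\wedge K_0(c)=c$ hold. What your approach buys is independence from Proposition~\ref{prop:main-invariance} (and hence from the full Reidemeister-invariance machinery) and a completely explicit isomorphism; what the paper's approach buys is brevity and the absence of any sign bookkeeping beyond the four-term diagram. One small thing to flag: your $\Phi$ realizes $V\otimes\KhCx(L,\p')$ with the \emph{diagonal} $y_{p_1}$-action, whereas the lemma states the action in which $y_{p_1}$ acts only on the $V$ factor; these are isomorphic via the shear $v_-\otimes c\mapsto v_-\otimes c - v_+\otimes(y_{p_1}\wedge c)$ (which commutes with the differential and the $\Lambda_{\p''}$-action), so you should either postcompose with that change of basis or simply remark that the two module structures on $V\otimes\KhCx(L,\p')$ agree up to isomorphism.
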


\begin{proof}
  By Proposition~\ref{prop:main-invariance}, we can assume that $p_0$
  and $p_1$ are adjacent to each other on the link diagram $L$. We have $\xi_{p_0}=\xi_{p_1}$ on
  $\KhCx(L)$, and therefore, they induce the same map, still denoted
  $\xi_{p_0}$ and $\xi_{p_1}$, on $\KhCx(L,\p'')$.  Then (up to some
  signs and bigrading shifts that we will suppress), $\KhCx(L,\p)$ is
  represented by the following mapping cone:
  \[
  \xymatrix{
    \KhCx(L,\p'')\ar[d]_-{\xi_{p_0}}\ar[r]^-{\xi_{p_1}}&    \KhCx(L,\p'')\ar[d]^-{-\xi_{p_0}}\\
    \KhCx(L,\p'')\ar[r]_-{\xi_{p_1}}&    \KhCx(L,\p'').
  }
  \]
  This is isomorphic to
  \[
  \left\{\vcenter{\hbox{\xymatrix{
    \KhCx(L,\p'')\ar[d]_-{\xi_{p_0}-\xi_{p_1}}\ar[r]^-{\xi_{p_1}}&    \KhCx(L,\p'')\ar[d]^-{\xi_{p_1}-\xi_{p_0}}\\
    \KhCx(L,\p'')\ar[r]_-{\xi_{p_1}}&    \KhCx(L,\p'')
  }}}\right\}
  =
  \left\{\vcenter{\hbox{\xymatrix{
    \KhCx(L,\p'')\ar[r]^-{\xi_{p_1}}&    \KhCx(L,\p'')\\
    \KhCx(L,\p'')\ar[r]_-{\xi_{p_1}}&    \KhCx(L,\p'')
  }}}\right\},
  \]
  with the isomorphism given by
  \[
  \left(\vcenter{\hbox{\xymatrix@C=0ex@R=0ex@M=0.2ex{
        a&\oplus&b\\
        \oplus&&\oplus\\
        c&\oplus&d
        }}}\right)
  \mapsto
  \left(\vcenter{\hbox{\xymatrix@C=0ex@R=0ex@M=0.2ex{
        a&\oplus&b\\
        \oplus&&\oplus\\
        (c-b)&\oplus&d
        }}}\right).
  \]
  It is immediate that the isomorphisms preserves the action by
  $\Lambda_{\p''}$, and it is not much harder to check that it
  respects the action by $y_{p_1}$ as well.
\end{proof}

When $p_0\in\p$ does not necessarily lie on the same component as some
other basepoint, we can only say the following:
\begin{lemma}\label{lemma:remove-one-basepoint}
  Let $(L,\p)$ be a pointed link diagram, and suppose that $\p$
  contains some point $p_0$, and $\p' = \p \minus \{p_0\}$.
  Then there is a short exact sequence
  \[
  0\to\Sigma^{(1,2)}\KhCx(L,\p')\to\KhCx(L,\p)\to\KhCx(L,\p')\to 0
  \]
  over $\Lambda_{\p'}$, which acts by restriction of scalars on
  $\KhCx(L,\p)$.  In particular, over any field $\F$,
  \[
  \rank\Kh^{\delta} (L,\p;\F)\leq 2\rank\Kh^{\delta}(L,\p';\F).
  \]
\end{lemma}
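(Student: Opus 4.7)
The plan is to realize $\KhCx(L,\p)$ as the mapping cone of $\xi_{p_0}$ acting on $\KhCx(L,\p')$. I would begin by splitting the exterior algebra as a $\Lambda_{\p'}$-module,
\[
\Lambda_\p \;=\; \Lambda_{\p'} \,\oplus\, (y_{p_0} \wedge \Lambda_{\p'}),
\]
which yields a direct sum decomposition of the underlying abelian group $\KhCx(L,\p) = \KhCx(L,\p') \oplus S$, where $S = (y_{p_0} \wedge \Lambda_{\p'}) \otimes \KhCx(L)$; the bigrading on $S$ is shifted by $(1,2)$ relative to $\KhCx(L,\p')$ because of the bigrading of $y_{p_0}$.

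Next I would verify that $S$ is a subcomplex by unpacking \eqref{eq: Kh-diff}: the only term in the differential that could take an element of $S$ outside of $S$ is the $\xi_{p_0}$-contribution $(y_{p_0}\wedge y_{p_0}\wedge a')\otimes\xi_{p_0}(b)$, which vanishes because $y_{p_0}\wedge y_{p_0}=0$. The restricted differential on $S$ then involves only $\Khdiff$ on $\KhCx(L)$ and the maps $\xi_q$ for $q\in\p'$; modulo the overall sign inherited from $\homgr(y_{p_0})=1$, this agrees with the differential of $\KhCx(L,\p')$. Hence the map $i\co a'\otimes b\mapsto (y_{p_0}\wedge a')\otimes b$ is a chain isomorphism $\Sigma^{(1,2)}\KhCx(L,\p')\to S$, and the quotient $\KhCx(L,\p)/S$ is canonically $\KhCx(L,\p')$ since modding out kills every differential term carrying a factor of $y_{p_0}$.

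To check $\Lambda_{\p'}$-equivariance of $i$, one notes that moving $y_{p_0}$ past a generator $y_q$ of $\Lambda_{\p'}$ inside $\Lambda_\p$ produces a Koszul sign $(-1)^{\homgr(y_q)\homgr(y_{p_0})}=-1$, and this is exactly absorbed by the Koszul sign appearing in the $\Lambda_{\p'}$-action on the shifted complex $\Sigma^{(1,2)}\KhCx(L,\p')$. Equivariance of the quotient map is immediate. Since $y_{p_0}$ has delta grading $1-\tfrac{1}{2}\cdot 2=0$, the short exact sequence respects $\deltagr$, and the associated long exact sequence in homology,
\[
\cdots\to\Kh^\delta(L,\p';\F)\to\Kh^\delta(L,\p;\F)\to\Kh^\delta(L,\p';\F)\to\Kh^{\delta+1}(L,\p';\F)\to\cdots,
\]
immediately gives $\rank\Kh^\delta(L,\p;\F)\leq 2\rank\Kh^\delta(L,\p';\F)$ over any field $\F$.

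The main source of delicacy is tracking the Koszul signs to confirm that $i$ is $\Lambda_{\p'}$-equivariant on the nose (and that it is a chain map with respect to the $-\Khdiff$ on the shifted complex); this is purely bookkeeping, but it is the only step that genuinely requires care.
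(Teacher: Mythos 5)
Your proof is correct and takes essentially the same route as the paper: you identify $\KhCx(L,\p)$ as the mapping cone of $\xi_{p_0}$ on $\KhCx(L,\p')$ and extract the short exact sequence by splitting $\Lambda_\p=\Lambda_{\p'}\oplus(y_{p_0}\wedge\Lambda_{\p'})$, which is exactly what the paper does (stated more tersely as ``immediate from the mapping cone formulation, keeping track of bigradings''). Your extra care with Koszul signs on the shifted complex and the $\Lambda_{\p'}$-equivariance of the inclusion is a welcome elaboration but does not change the argument.
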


\begin{proof}
This is immediate from the mapping cone formulation. As before, if
$\xi_{p_0}$ denote the induced map on $\KhCx(L,\p')$, then
$\KhCx(L,\p)$ is given by the mapping cone
\[
\xymatrix{
\KhCx(L,\p')\ar[r]^-{\xi_{p_0}}&\KhCx(L,\p').
}
\]
Keeping track of the bigradings, the statement follows.
\end{proof}

Finally, we get some relation with the reduced Khovanov complex.
\begin{lemma} \label{lemma:Kh-reduced} Let $(L,\p)$ be a pointed link
  diagram, where $\p$ consists of a single point $p$. Then there is
  a short exact sequence
  \[
  0\to\Sigma^{(0,-1)}\rKhCx(L,p)\to\KhCx(L,\p)\to C \to 0,
  \]
  where $C$ is a chain complex that is chain homotopy equivalent to $\Sigma^{(1,3)}\rKhCx(L,p)$. Furthermore, over any field $\F$, the
  short exact sequence splits. Therefore, we have
  \[
  \rank\Kh(L,\p;\F) = 2\rank\rKh(L,p;\F).
  \]
  Keeping track of delta gradings,
  \[
  \rank\Kh^\delta(L,\p;\F) = \rank\rKh{}^{\delta+1/2} (L,p;\F) + \rank\rKh{}^{\delta-1/2} (L,p;\F).
  \]
\end{lemma}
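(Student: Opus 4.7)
The plan is to exploit the concrete action of $\xi_p$ on $\KhCx(L)$: at each vertex $v$, the complex $\KhCx(L_v)=\bigotimes_{\pi_0(L_v)}\KhAlg$ is acted on by $\xi_p$ as $\pm$ multiplication by $x$ in the tensor factor containing $p$, and since $x^2=0$ in $\KhAlg=\Z[x]/x^2$ we have the on-the-nose equality $\ker(\xi_p)=\im(\xi_p)$. This is also what underlies the canonical identification $\Sigma^{(0,1)}\ker(\xi_p)\cong\Sigma^{(0,-1)}\mathrm{coker}(\xi_p)$ noted in Section~\ref{subsec: Kh-action}: writing $\xi_p$ as the composition of the quotient $\KhCx(L)\to\KhCx(L)/\ker(\xi_p)$, an induced map $\bar\xi_p$, and the inclusion $\ker(\xi_p)\hookrightarrow\KhCx(L)$, the equality $\ker=\im$ forces $\bar\xi_p\colon\KhCx(L)/\ker(\xi_p)\xrightarrow{\cong}\ker(\xi_p)$ to be a chain isomorphism of bidegree $(0,-2)$. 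For the short exact sequence, observe that the cross term $y_p\otimes\xi_p(a)$ in \eqref{eq: Kh-diff} vanishes for $a\in\ker(\xi_p)$, and $\Khdiff$ preserves $\ker(\xi_p)$ because $\xi_p$ is a chain map; hence $1\otimes\ker(\xi_p)$ is a subcomplex of $\KhCx(L,\{p\})$, and setting $C=\KhCx(L,\{p\})/(1\otimes\ker(\xi_p))$ together with the identification $\ker(\xi_p)=\Sigma^{(0,-1)}\rKhCx(L,p)$ produces the desired short exact sequence.

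To prove $C\simeq\Sigma^{(1,3)}\rKhCx(L,p)$, I would write $C=(1\otimes\KhCx(L)/\ker(\xi_p))\oplus(y_p\otimes\KhCx(L))$ as bigraded groups and define
\[
\pi\colon C\too\Sigma^{(1,2)}(\KhCx(L)/\ker(\xi_p))=\Sigma^{(1,3)}\rKhCx(L,p)
\]
by $[1\otimes a]\mapsto 0$ and $y_p\otimes b\mapsto y_p\otimes[b]$; it is a chain map because $\xi_p(a)\in\ker(\xi_p)$ kills the would-be cross term. The kernel $\ker(\pi)=(1\otimes\KhCx(L)/\ker(\xi_p))\oplus(y_p\otimes\ker(\xi_p))$ admits the chain contraction
\[
H(1\otimes[a])=0,\qquad H(y_p\otimes b)=1\otimes\bar\xi_p^{-1}(b)\quad\text{for }b\in\ker(\xi_p)=\im(\xi_p),
\]
and the identity $dH+Hd=\Id$ on $\ker(\pi)$ follows from a direct calculation using that $\bar\xi_p$ intertwines the Khovanov differentials. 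Hence $\pi$ is a chain homotopy equivalence.

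For the splitting and rank formulas over a field $\F$, I would filter $\KhCx(L,\{p\};\F)$ by exterior algebra degree. The $d_0=y_p\otimes\xi_p$ differential of the associated spectral sequence has homology
\[
E_1=\Sigma^{(0,-1)}\rKhCx(L,p;\F)\oplus\Sigma^{(1,3)}\rKhCx(L,p;\F),
\]
again using $\ker\xi_p=\im\xi_p$, and $d_1$ is the direct sum of the two reduced Khovanov differentials with no cross term, because $\Khdiff$ preserves exterior degree. Since the filtration takes only two values, all higher differentials vanish and $E_\infty=E_2\cong\Sigma^{(0,-1)}\rKh(L,p;\F)\oplus\Sigma^{(1,3)}\rKh(L,p;\F)$; the $\deltagr$ shifts $r-s/2$ evaluate to $+1/2$ and $-1/2$, giving the delta-graded rank formula. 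Comparing this total rank with the long exact sequence of the short exact sequence forces the connecting homomorphism to vanish, so that long exact sequence collapses to a split short exact sequence on homology. The step most prone to error will be verifying $dH+Hd=\Id$ on $\ker(\pi)$, which requires careful bookkeeping of signs from the $\Sigma^{(r,s)}$ conventions of Section~\ref{sec: khovanov} and the sign on the differential on $y_p\otimes\KhCx(L)$.
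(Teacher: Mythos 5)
Your construction of the short exact sequence and the contraction $H$ exhibiting $C\simeq\Sigma^{(1,3)}\rKhCx(L,p)$ is correct and genuinely different from the paper's route: the paper reformulates $\KhCx(L,\{p\})$ as a mapping cone over $C_1$ and $C_x$ (the pieces of $\KhCx(L)$ where the pointed circle carries label $1$ or $x$) and observes that the diagonal identification $X\from C_1\to C_x$ lets one cancel the cone, whereas you work directly with $\ker(\xi_p)=\im(\xi_p)$ and the induced isomorphism $\bar\xi_p$, producing an explicit contracting homotopy. Both are short; yours makes the homotopy equivalence concrete at the expense of a sign check, which does go through with the conventions of Section~\ref{sec: khovanov}.

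However, the splitting argument has a genuine gap. You write that you ``filter by exterior algebra degree'' and then that ``$d_0=y_p\otimes\xi_p$,'' but these two statements describe different filtrations of the bicomplex $\Lambda_\p\otimes\KhCx(L;\F)$. Filtering by exterior degree (which is the filtration with only two values) gives $d_0=1\otimes\Khdiff$, $E_1\cong\Kh(L;\F)\oplus\Sigma^{(1,2)}\Kh(L;\F)$ (\emph{unreduced}, not reduced), and $d_1=(\xi_p)_*$; this spectral sequence does collapse at $E_2$, but $E_2 \cong \ker((\xi_p)_*)\oplus\coker((\xi_p)_*)$ and identifying these with $\rKh(L,p;\F)$ is precisely the nontrivial input, which is what Khovanov's decomposition of $\KhCx(L;\F)$ as an $\F[\xi_p]/\xi_p^2$--module supplies in the paper's proof. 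The filtration that actually yields your $d_0=y_p\otimes\xi_p$ is instead the filtration by the internal homological grading $\homgr$ of the $\KhCx(L)$ factor (since $\xi_p$ preserves $\homgr$ and $\Khdiff$ raises it). That filtration has $n+1$ levels, not two, so you cannot conclude $E_\infty=E_2$ on filtration length alone. Indeed, for that filtration a potential $d_2\from\Sigma^{(1,3)}\rKh(L,p;\F)\to\Sigma^{(0,-1)}\rKh(L,p;\F)$ survives the column-constraint (the exterior degree bound only kills $d_r$ for $r\ge 3$), and nothing in your argument rules it out; if $d_2\ne 0$ the rank equality would fail. So your final rank computation, and the conclusion that the connecting homomorphism vanishes, do not follow. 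The paper's appeal to Khovanov's module structure theorem (or an equivalent statement) appears to be essential, not merely a convenience.
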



\begin{proof}
  Let $C_1$, respectively $C_x$, denote the subgroup of $\KhCx(L)$
  where the pointed circle is labeled by $1$, respectively $x$. Then
  the Khovanov differential $\Khdiff$ restricts to a differential
  $d_x$ on $C_x$, making it a subcomplex of $\KhCx(L)$; and $C_1$ can
  be identified with the quotient complex $\KhCx(L)/C_x$, thus
  inheriting a quotient differential $d_1$. Furthermore, each of
  $\Sigma^{(0,1)}C_x$ and $\Sigma^{(0,-1)}C_1$ can be identified with
  the reduced complex $\rKhCx(L,p)$, and the Khovanov complex is the
  mapping cone of some chain map $f\from C_1\to C_x$, where $f$ is (up
  to some signs) the part of the Khovanov differential $\Khdiff$ that
  goes from $C_1$ to $C_x$.

  Up to some grading shifts and some global sign changes, the
  complex $\KhCx(L,\p)$ is given by the mapping cone
  \[
  \xymatrix{
    C_1\ar[d]_-f\ar[dr]|-X&C_1\ar[d]^-f\\
    C_x&C_x,
  }
  \]
  where the diagonal arrow $X$ is the identification between $C_1$ and
  $C_x$ coming from relabeling the marked circle from $1$ to $x$.

  Consider the inclusion $C_x\into\KhCx(L,\p)$ as the copy on the
  lower-left corner. The corresponding quotient complex is isomorphic
  to the mapping cone
  \[
  \xymatrix{
    C_1\ar[dr]|-X&C_1\ar[d]^-f\\
    &C_x,
  }
  \]
  which is chain homotopy equivalent to $C_1$ because $X\from C_1\to
  C_x$ is an isomorphism. Since, each of $C_x$ and $C_1$ are
  identified with the reduced complex $\rKhCx(L,p)$, the first
  statement follows (provided one kept track of the bigrading shifts).


  Over any field $\F$, Khovanov \cite[p.~189]{Kho-kh-Frobenius} showed that $\KhCx(L;\F)$ is isomorphic as a module over
  $D_{\p}\otimes\F=\F[\xi_p]/\xi_p^2$ to a direct sum of copies (with some grading shifts) of the   following three pieces:
  \[
   \left(
   \vcenter{\hbox{\xymatrix{
    a\\
    aX}}}
  \right),
  \qquad
  \left(
   \vcenter{\hbox{\xymatrix{
    a\ar[r]&b\\
    aX\ar[r]&bX}}}
  \right),
  \qquad
   \left(
   \vcenter{\hbox{\xymatrix{
    a\ar[dr]&b\\
    aX&bX}}}
  \right).
  \]
  (Actually, Khovanov only states this over $\Q$, but the argument works over any field $\F$.) Then, one can easily show that in each
  of these three cases, the inclusion $(C_x;\F)\into\KhCx(L,\p;\F)$
  splits. For instance, in the last case the the splitting map $\xymatrix{
  (C_x;\F)&\KhCx(L,\p;\F)\ar@{-->}[l]}$ is given by
  \[
  \vcenter{\hbox{\xymatrix{
    &&&&a\ar[dr]\ar[drr]&b\ar[drr]&a\ar[dr]&b\\
    aX&bX&&&aX\ar@{-->}@/_2ex/[llll]&bX\ar@{-->}@/_2ex/[llll]&aX\ar@{-->}@/^2ex/[lllll]^-{-1}&bX
  }}}.
  \]
  The other two cases are easier.
\end{proof}


Combining Lemmas~\ref{lemma:remove-one-basepoint}
and~\ref{lemma:Kh-reduced}, we obtain:
\begin{proposition} \label{prop: Kh-reduced2}
If $(L,\p)$ is a pointed link diagram, then over any field $\F$, and for any point $p\in\p$,
\[
\rank\Kh^\delta(L,\p;\F) \leq 2^{\abs{\p}-1} \left( \rank\rKh{}^{\delta+1/2} (L,p;\F) + \rank\rKh{}^{\delta-1/2} (L,p;\F)\right)
\]
When $L$ is a knot, this relation is an equality.
\end{proposition}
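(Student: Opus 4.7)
The strategy is to iterate Lemma~\ref{lemma:remove-one-basepoint} to reduce the number of basepoints down to one, and then invoke Lemma~\ref{lemma:Kh-reduced} to cash in the last basepoint for reduced Khovanov homology. The key preliminary observation is that the bigrading shift $\Sigma^{(1,2)}$ appearing in Lemma~\ref{lemma:remove-one-basepoint} is trivial on the delta grading, since $\deltagr = \homgr - \tfrac12 \intgr$ and $1 - \tfrac12 \cdot 2 = 0$. In particular, the short exact sequence in Lemma~\ref{lemma:remove-one-basepoint} splits into short exact sequences in each fixed delta grading, and passing to the long exact sequence in homology yields
\[
\rank \Kh^\delta(L,\p;\F) \le 2 \rank \Kh^\delta(L,\p';\F)
\]
for any $\p' = \p \minus \{p_0\}$.

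First I would iterate this inequality $\abs{\p}-1$ times, removing the basepoints of $\p$ one at a time (always keeping the distinguished point $p$) until only $p$ remains. This gives
\[
\rank \Kh^\delta(L,\p;\F) \le 2^{\abs{\p}-1} \rank \Kh^\delta(L,\{p\};\F).
\]
Then I would apply Lemma~\ref{lemma:Kh-reduced} to the single-basepoint complex $\KhCx(L,\{p\})$, which identifies
\[
\rank \Kh^\delta(L,\{p\};\F) = \rank \rKh{}^{\delta+1/2}(L,p;\F) + \rank \rKh{}^{\delta-1/2}(L,p;\F).
\]
Substituting this into the previous inequality produces the claimed bound.

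For the equality when $L$ is a knot, every pair of points in $\p$ lies on the same component, so at each stage of the reduction Lemma~\ref{lemma: Kh-samecomponent} applies in place of Lemma~\ref{lemma:remove-one-basepoint}. That lemma gives a genuine isomorphism $\Kh(L,\p) \cong V \otimes \Kh(L,\p')$, where $V$ has generators in bigradings $(0,0)$ and $(1,2)$, both of which lie in delta grading $0$; hence tensoring with $V$ exactly doubles the rank in every delta grading. Iterating $\abs{\p}-1$ times turns each inequality into an equality, and Lemma~\ref{lemma:Kh-reduced} supplies the final equality at the single-basepoint stage. I do not expect any serious obstacle: the only point that requires any care is the bookkeeping of the delta grading through the shift $\Sigma^{(1,2)}$ and through the splitting of $V$, both of which are immediate.
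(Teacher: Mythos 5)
Your proposal is correct and matches the paper's intended argument: the paper states the proposition simply as "Combining Lemmas~\ref{lemma:remove-one-basepoint} and~\ref{lemma:Kh-reduced}, we obtain," and you have supplied exactly the right details, iterating the rank-doubling inequality of Lemma~\ref{lemma:remove-one-basepoint} to reduce to a single basepoint and then applying Lemma~\ref{lemma:Kh-reduced}. Your observation that the equality case for a knot requires Lemma~\ref{lemma: Kh-samecomponent} (since all basepoints lie on the same component, each removal step is a genuine isomorphism $\Kh(L,\p)\cong V\otimes\Kh(L,\p')$) correctly fills in a point that the paper's one-line citation glosses over, and the bookkeeping of the delta grading through $\Sigma^{(1,2)}$ and through $V$ is exactly right.
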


\subsection{Additional algebraic structure} \label{subsec: additional}

We now describe some additional algebraic structure on
$\KhCx(L,\p)$. Let $\Gamma_\p=\Lambda^*(\zeta_p\mid p\in\p)$ be the
exterior algebra on generators $\{\zeta_p\mid p\in\p\}$. The
$(\homgr,\intgr)$ of each $\zeta_p$ is $(-1,-4)$; define the bigrading
on $\Gamma_\p$ by extending it multiplicatively. The complex
$\KhCx(L,\p)$ carries a left $\Gamma_\p$ action by defining
\begin{equation} \label{eq: zetap}
\zeta_p(a \otimes b) = y_p^*(a) \otimes \xi_p(b),
\end{equation}
where $y_p^*$ denotes contraction by the dual element to $y_p$. To see that this is a valid action, observe:
\begin{align*}
&\phantom{==}\zeta_p(\Khdiff(a \otimes b))+\Khdiff(\zeta_p(a
\otimes b))\\
 &= \big((-1)^{\homgr(a)} y_p^*(a) \otimes
\xi_p(\Khdiff(b)) +\sum_{q \in \p} y_p^*(y_q\wedge a)
\otimes \xi_p(\xi_q(b))) \\
&\qquad\qquad +\big((-1)^{\homgr(a)-1} y_p^*(a) \otimes
\Khdiff(\xi_p(b)) + \sum_{q \in \p} (y_q\wedge y_p^*(a)) \otimes \xi_q(\xi_p(b))\big)\\
&=\sum_{q\in\p}(y_p^*(y_q\wedge a)+y_q\wedge
y_p^*(a))\otimes \xi_q(\xi_p(b))\\
&=a\otimes\xi_p^2(b)=0.
\end{align*}

To see the interaction of the left $\Gamma_\p$--module structure and
the left $\Lambda_\p$--module structure of $\KhCx(L,\p)$, we have:
\begin{align*}
&\phantom{==}\zeta_p(y_q(a \otimes b))+y_q(\zeta_p(a \otimes b))\\
 &= y_p^*((y_q\wedge a)) \otimes \xi_p(b)+(y_q\wedge y_p^*(a)) \otimes \xi_p(b) \\
&= \delta_{p,q} a \otimes \xi_p(b),
\end{align*}
where $\delta_{p,q}$ denotes Kronecker delta. Thus, the actions by $\zeta_p$ and $y_q$ anticommute on the nose when $p \ne q$, and up to
chain homotopy when $p=q$; the homotopy in the latter case is given by
$H_p(a \otimes b) = y_p^*(a)\otimes b$ since:
\begin{align*}
&\phantom{==} H_p(\Khdiff(a \otimes b)) +\Khdiff(H_p(a\otimes b))\\
 &= \big((-1)^{\homgr(a)} y_p^*(a) \otimes \Khdiff(b) + \sum_{q \in
   \p} y_p^*(y_q \wedge a)\otimes\xi_q(b)\big)\\
 &\qquad\qquad +\big((-1)^{\homgr(a)-1} y_p^*(a) \otimes \Khdiff(b) + \sum_{q \in \p} (y_q \wedge y_p^*(a))\otimes\xi_q(a)\big)\\
&= \delta_{p,q}a\otimes\xi_q(b).
\end{align*}

\begin{remark}
  This computation suggests that there is a left action on
  $\KhCx(L,\p)$ by some non-commutative dg-algebra generated by $y_p$,
  $\zeta_p$, and $H_p$, containing $\Lambda_\p$ and $\Gamma_\p$ as
  sub-algebras, and with $y_p$ and $\zeta_q$ (graded) commuting for
  all distinct $p,q$, and $\bdy H_p=y_p\zeta_p+\zeta_py_p$.  We will
  not concern ourselves with questions about invariance of these new
  maps $\zeta_p$, and the possible existence of higher
  $A_\infty$-structures involving $y_p$ and $\zeta_p$, although the
  relevance of these maps will soon become apparent.
\end{remark}

\subsection{Planar unlinks} \label{sec: Kh-unlink}

In this section, we consider the special case of a diagram with zero crossings. Let $(L,\p)$ be a non-degenerate, $k$-component planar unlink, oriented as the boundary of the black region.  Let $[L]=\{L_1, \dots, L_k\}$ denote the set of components of $L$, and let $\pi \co \p \to [L]$ be the natural surjection induced by the inclusion of $\p$.

For each $i\in\{1, \dots, k\}$, define $\alpha_{i} = \sum_{p \in \p_i} y_p \in \Lambda_{\p}$. Let $\Lambda_{\p,L}$ denote the quotient of $\Lambda_\p$ by the ideal generated by $\alpha_1, \dots, \alpha_k$.

Let $\Gamma_L$ denote the exterior algebra on generators $\gamma_1, \dots, \gamma_k$ corresponding to the components of $L$, with each generator in bigrading $(-1,-4)$. Given a section $s$ of $\pi$ (i.e. a choice of $p_{s(i)} \in \p \cap L_i$ for each $i$), the inclusion $\gamma_i \mapsto \zeta_{p_{s(i)}}$ makes $\Gamma_L$ a subring of $\Gamma_\p$.

By definition, as groups, $\KhCx(L,\p) \cong \Lambda_\p\otimes
\KhCx(L)$. After shifting bigradings by $(0,k)$, we may identify this
with the algebra generated by
\[
\{y_p \mid p \in \p\} \cup \{x_i\mid L_i\in[L]\},
\]
with each $y_p$ in bigrading $(1,2)$ and each $x_i$ in bigrading $(0,-2)$, with the following relations:
\begin{equation} \label{eq: Kh-unlink-relations}
x_i^2 = y_p^2 = 0, \qquad  x_i x_j = x_j x_i, \qquad y_p x_i = x_i y_p, \qquad y_p y_{q} = -y_{q} y_p.
\end{equation}
The differential is then left multiplication by the element
$\sum_{i=1}^k \alpha_{i} x_i$. The left action by $y_p$ is given by
left multiplication by $y_p$, and the left action by $\zeta_p$ is
given by \( x_{\pi(p)} y_p^*(\cdot), \) where $L_{\pi(p)}\in[L]$
is the component containing $p$.

\begin{lemma}\label{lem:action-on-homology}
  The left actions by $\Lambda_\p$ and $\Gamma_\p$ on $\KhCx(L,\p)$
  descend to a well-defined (that is, independent of the choice of
  section $s$ of $\pi$) left action by the graded tensor product
  $\Lambda_{\p,L} \otimes \Gamma_L$ on homology.
  $\Kh(L,\p)$. Specifically:
\begin{enumerate}[leftmargin=*]
\item\label{lem:action-on-homology:alpha} For any $i\in\{1,\dots,k\}$,
  the action $(-1)^{\beta_i(\cdot)}\alpha_{i}(\cdot)$ on
  $\KhCx(L,\p)$ is nullhomotopic, where for any $w\in\KhCx(L,\p)$,
  $\beta_i(w)$ is defined to be
  \[
  \beta_i(w)=\begin{cases}
    0&\text{if $x_i$ does not appear in $w$,}\\
    1&\text{if $x_i$ appears in $w$.}
  \end{cases}
  \]
\item\label{lem:action-on-homology:y-zeta} For any $p,q\in\p$, the
  actions by $y_p\zeta_q$ and $-\zeta_q y_p$ on $\KhCx(L,\p)$ are
  chain homotopic.
\item\label{lem:action-on-homology:zeta} For any $i\in\{1,\dots,k\}$
  and for any $p,q\in\p_i$, the actions by $\zeta_p$ and $\zeta_q$ on
  $\KhCx(L,\p)$ are chain homotopic.
\end{enumerate}
\end{lemma}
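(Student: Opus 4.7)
The plan is to exhibit explicit chain homotopies establishing each of the three claimed relations on the chain level; once these are in hand, the well-definedness of the $\Lambda_{\p,L}\otimes\Gamma_L$-action on $\Kh(L,\p)$ will follow formally. Throughout, I will use the algebra identification of $\KhCx(L,\p)$ described just above, under which the Khovanov differential $\Khdiff$ acts as left multiplication by $D=\sum_{i=1}^k\alpha_ix_i$, since the original Khovanov differential on $\KhCx(L)$ vanishes for a crossing-free diagram. The principal auxiliary operator will be $\xi_i^*\colon\KhCx(L)\to\KhCx(L)$, the linear map sending $1\mapsto 0$ and $x_i\mapsto 1$ on the factor $V_i=\KhAlg$ associated to $L_i$, extended by the identity on the other Frobenius factors and on $\Lambda_\p$. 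It commutes with every $\alpha_j$ and with $\xi_j$ for $j\ne i$, and on $V_i$ satisfies the Clifford-type relation $\xi_i^*\xi_i+\xi_i\xi_i^*=\id$; from the latter one reads off $[\xi_i^*,\xi_i](b)=(-1)^{\beta_i(b)}b$ by a $2\times 2$ check on the basis $\{1,x_i\}$.

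For Part~(\ref{lem:action-on-homology:alpha}), I will take the nullhomotopy of $\tilde{\alpha}_i:=(-1)^{\beta_i(\cdot)}\alpha_i(\cdot)$ to be $H_i=\xi_i^*$. Since $\Khdiff(a\otimes b)=\sum_{p\in\p}(y_p\wedge a)\otimes\xi_p(b)$ and $\xi_i^*$ commutes with each $y_p$ (acting on distinct tensor factors) and with each $\xi_p$ for $\pi(p)\ne i$, only the summands with $p\in\p_i$ contribute to the commutator, and a direct computation gives
\[
(\xi_i^*\Khdiff-\Khdiff\xi_i^*)(a\otimes b)=\sum_{p\in\p_i}(y_p\wedge a)\otimes[\xi_i^*,\xi_i](b)=\tilde{\alpha}_i(a\otimes b).
\]
This is precisely the nullhomotopy relation demanded by the sign conventions at the start of Section~\ref{sec: khovanov} for a map of bidegree $(1,2)$.

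For Part~(\ref{lem:action-on-homology:y-zeta}), the computation already carried out in Section~\ref{subsec: additional} yields $y_q\zeta_p+\zeta_py_q=\delta_{p,q}\xi_p$ on the nose: when $p\ne q$ there is nothing to prove, and when $p=q$ the operator $H_p(a\otimes b)=y_p^*(a)\otimes b$ from Section~\ref{subsec: additional} already satisfies $\Khdiff H_p+H_p\Khdiff=\xi_p$, providing the required homotopy. For Part~(\ref{lem:action-on-homology:zeta}), I will propose the homotopy $K_{p,q}=y_p^*\circ y_q^*$ of bidegree $(-2,-4)$; applying the graded Leibniz rule together with $\{\Khdiff,y_p^*\}=\xi_p=\xi_i$ and $\{\Khdiff,y_q^*\}=\xi_q=\xi_i$ (both derived in Section~\ref{subsec: additional}) yields
\[
\Khdiff K_{p,q}-K_{p,q}\Khdiff=\xi_iy_q^*-y_p^*\xi_i=\zeta_q-\zeta_p,
\]
so that $K_{p,q}\Khdiff-\Khdiff K_{p,q}=\zeta_p-\zeta_q$, which is the desired homotopy relation for a map of bidegree $(-1,-4)$.

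The main obstacle I anticipate is bookkeeping the various Koszul signs coming from the graded tensor-product conventions and the specific chain-homotopy sign conventions in item~(4) at the start of Section~\ref{sec: khovanov} --- in particular, verifying that the sign $(-1)^{\beta_i(b)}$ read off from $[\xi_i^*,\xi_i]$ is exactly the one appearing in Part~(\ref{lem:action-on-homology:alpha}), rather than its negative. Once the three chain-level relations are established, the concluding ``well-defined action on homology'' statement of the lemma follows formally: Part~(\ref{lem:action-on-homology:alpha}) forces the $\Lambda_\p$-action to descend to one by $\Lambda_{\p,L}=\Lambda_\p/(\alpha_1,\ldots,\alpha_k)$; Part~(\ref{lem:action-on-homology:y-zeta}) makes $y_p$ and $\zeta_q$ graded-anticommute in homology, so the combined action assembles into one of the graded tensor product; and Part~(\ref{lem:action-on-homology:zeta}) identifies the homology-level actions of $\zeta_p$ and $\zeta_q$ whenever $p,q\in\p_i$, proving that the action of the subring $\Gamma_L\subset\Gamma_\p$ defined via any section $s$ of $\pi$ is independent of $s$.
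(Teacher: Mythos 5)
Your proof is correct and follows the paper's proof essentially line by line: the homotopies $H=x_i^*$ (your $\xi_i^*$) for Part~(\ref{lem:action-on-homology:alpha}), the Section~\ref{subsec: additional} computation for Part~(\ref{lem:action-on-homology:y-zeta}), and $H=y_p^*y_q^*$ for Part~(\ref{lem:action-on-homology:zeta}) are exactly the ones used there. The only superficial difference is that in Part~(\ref{lem:action-on-homology:zeta}) you reorganize the verification using the anticommutator identity $\{\Khdiff,y_p^*\}=\xi_p$ rather than expanding $\Khdiff=\sum_j\alpha_jx_j$ directly, but this is the same computation.
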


\begin{proof}
For Part~(\ref{lem:action-on-homology:alpha}), define the homotopy to be \(H(w)=x^*_i(w).\) Following the sign convention in \eqref{eq: homotopy-sign}, we have:
\begin{align*}
&\phantom{==}H\Khdiff(w)-\Khdiff H(w)\\
&=x^*_i(\sum_j \alpha_{j} x_jw)-\sum_j \alpha_{j} x_jx_i^*(w)\\
&=\alpha_{i}(x_i^*x_i-x_ix_i^*)(w)=(-1)^{\beta_i(w)}\alpha_{i}(w).
\end{align*}

Part~(\ref{lem:action-on-homology:y-zeta}) is proved in
Section~\ref{subsec: additional}.

For Part~(\ref{lem:action-on-homology:zeta}), define the (graded) null-homotopy to be $H(w)=y^*_p y^*_q(w)$. Then
\begin{align*}
&\phantom{==}H\Khdiff(w)-\Khdiff H(w)\\
&=y^*_py^*_q(\sum_j\alpha_{j}x_jw)-\sum_j\alpha_{j}x_jy^*_py^*_q(w)\\
&=\sum_j
x_j\big(y^*_p(y^*_q\alpha_{j}+\alpha_{j}y^*_q)-(y^*_p\alpha_{j}+\alpha_{j}
y^*_p)y^*_q\big)(w)\\
&=x_i(y_p^*(w)-y_q^*(w))\\
&=\zeta_p(w)-\zeta_q(w).\qedhere
\end{align*}
\end{proof}

\begin{proposition} \label{prop: Kh-unlink} If $(L, \p)$ is a
  non-degenerate, $k$-component pointed unlink (see
  Definition~\ref{def:pointed-link}), the homology $\Kh(L,\p)$ is
  isomorphic as a
  $\Lambda_{\p,L}\otimes\Gamma_L$--module to
  $\Lambda_{\p,L}\otimes\Gamma_L$, generated by the
  homology class of $\alpha_{1} \cdots \alpha_{k}$. (Note that
  the choice of this generator depends up to sign on a choice of ordering
  of the components of $L$, modulo alternating permutations.)
\end{proposition}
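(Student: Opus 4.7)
The plan is to decompose $\KhCx(L,\p)$ as a tensor product over the components of $L$, reduce to a single unknot with multiple basepoints via the Künneth theorem, and then compute directly. Because $L_1,\dots,L_k$ are disjoint planar circles and $\p=\bigsqcup_i\p_i$, there are natural identifications $\Lambda_\p\cong\bigotimes_i\Lambda_{\p_i}$ and $\KhCx(L)\cong\bigotimes_i\Z[x_i]/x_i^2$, giving $\KhCx(L,\p)\cong\bigotimes_{i=1}^k\KhCx(L_i,\p_i)$ as abelian groups. Using anti-commutation among the $y_p$'s and commutation of each $y_p$ with each $x_j$, I would check that the global differential $\sum_i\alpha_ix_i\,\cdot$ agrees with the Künneth tensor-product differential under the sign convention of Section~2 (the relevant sign comes from moving $\alpha_j$ past an element of homological degree $h$ in another factor, which contributes $(-1)^h$, matching the homological degree $1$ of $\alpha_j$). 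Since everything is free over $\Z$, Künneth then gives $\Kh(L,\p)\cong\bigotimes_i\Kh(L_i,\p_i)$ and reduces the statement to the case $k=1$.

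For a single unknot $L_i$ with $\p_i=\{p_1,\dots,p_m\}$, I would change exterior-algebra generators by replacing $y_{p_1}$ with $\alpha_i$, so that $\Lambda_{\p_i}\cong\Lambda[\alpha_i]\otimes\Lambda_{\p_i,L_i}$, where $\Lambda_{\p_i,L_i}$ is visibly the quotient $\Lambda_{\p_i}/(\alpha_i)$ and is the free exterior algebra on $y_{p_2},\dots,y_{p_m}$. Then
\[
\KhCx(L_i,\p_i)\cong\Lambda_{\p_i,L_i}\otimes\bigl(\Lambda[\alpha_i]\otimes\Z[x_i]/x_i^2\bigr),
\]
with the differential $\alpha_ix_i\,\cdot$ acting only on the second tensor factor. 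That second factor is the $4$-dimensional complex $\Z\xrightarrow{\alpha_ix_i}\Z\alpha_ix_i$ whose homology is free of rank two, generated by $[\alpha_i]$ and $[x_i]$. Hence $\Kh(L_i,\p_i)$ is free abelian of rank $2^m$, and I would verify that as a module it is $\Lambda_{\p_i,L_i}\otimes\Gamma_{L_i}$ generated by $[\alpha_i]$: the $\Lambda_{\p_i,L_i}$-action is left multiplication, and for $\gamma_i$ one computes
\[
\zeta_{p_1}(\alpha_i)=y_{p_1}^*(\alpha_i)\otimes x_i=1\otimes x_i=[x_i].
\]
Lemma~\ref{lem:action-on-homology} ensures both that $\Lambda_{\p_i,L_i}$ really acts (part~\ref{lem:action-on-homology:alpha}) and that the class of $\zeta_{p_1}$ on homology is independent of which $p_1\in\p_i$ was chosen (part~\ref{lem:action-on-homology:zeta}).

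Combining the Künneth decomposition with the $k=1$ result gives
\[
\Kh(L,\p)\cong\bigotimes_i\bigl(\Lambda_{\p_i,L_i}\otimes\Gamma_{L_i}\bigr)\cong\Lambda_{\p,L}\otimes\Gamma_L,
\]
generated by the tensor product of the $[\alpha_i]$'s, which represents the class of $\alpha_1\cdots\alpha_k$; this element is a cycle because $\alpha_j\cdot(\alpha_1\cdots\alpha_k)=0$ in the exterior algebra for every $j$, and reordering the components changes it by the sign of the permutation because each $\alpha_i$ has odd homological degree. The main technical obstacle is sign bookkeeping: one must confirm both that the Künneth tensor-product differential really equals $\sum_i\alpha_ix_i\,\cdot$, and that the global $\Gamma_L$-action (defined via the contraction operators $y_p^*$) splits correctly across the Künneth decomposition into the individual $\Gamma_{L_i}$-actions. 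Both are direct but delicate computations in the exterior algebra.
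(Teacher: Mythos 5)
Your proposal is correct and follows essentially the same route as the paper: a Künneth decomposition $\KhCx(L,\p)\cong\bigotimes_i\KhCx(L_i,\p_i)$ reducing to the case of a single pointed unknot, followed by a direct computation of the homology of the two-step mapping cone $\Lambda_{\p_i}\xrightarrow{\alpha_ix_i}\Lambda_{\p_i}x_i$ and the identification $\zeta_{p_{s(i)}}[\alpha_i]=[x_i]$. Your change of variables $y_{p_1}\mapsto\alpha_i$ is a slightly more explicit way of organizing the single-component computation, but it yields the same identification of the homology with $\Lambda_{\p_i,L_i}\otimes\Gamma_{L_i}$.
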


\begin{proof}
There is a natural isomorphism of chain complexes
\[
\KhCx(L,\p) \cong \KhCx(L_1, \p_1) \otimes \cdots \otimes \KhCx(L_k,\p_k),
\]
and the actions of $\Lambda_{\p,L}$ and $\Gamma_{L}$
respect this decomposition. By the K\"unneth formula, it thus suffices
to consider the components separately. Each complex $\KhCx(L_i,\p_i)$
is a mapping cone
\[
\Lambda_{\p_i} \xrightarrow{\alpha_{i}x_i} \Lambda_{\p_i}x_i,
\]
whose homology is evidently $\alpha_{i} \Lambda_{\p_i} \oplus
\left(\Lambda_{\p_i} /\alpha_{i}\right)x_i$; each summand is
isomorphic to $ \Lambda_{\p_i,L_i}$. Moreover, we have
$\zeta_{p_{s(i)}}(\alpha_{i}) = x_i$, where $p_{s(i)}\in\p_i$ is the
chosen basepoint in the component $L_i$. Thus,
\[
\Kh(L,\p) \cong \Kh(L_1, \p_1) \otimes \cdots \otimes \Kh(L_k,\p_k),
\]
and it is generated as a
$\Lambda_{\p,L} \otimes \Gamma_{L}$--module by
the homology class of $\alpha_{1} \otimes \cdots \otimes
\alpha_{k}$.
\end{proof}

\subsection{Elementary cobordisms for planar unlinks} \label{subsec: Kh-cobordism}
In this subsection, we will study planar unlinks that are related by
elementary saddle cobordisms. More specifically, assume $L$ is a $k$-component
planar unlink as before, and a new planar unlink $L'$ is obtained from
$L$ by splitting a component into two components. The original Khovanov split and merge maps $\Delta$ and $\mu$ (coming from the Frobenius comultiplication and multiplication, respectively) extend to maps
\begin{align*}
\tilde\Delta &= \id_{\Lambda_\p}\otimes \Delta\co\KhCx(L,\p) \to \KhCx(L',\p) \\
\tilde\mu &= \id_{\Lambda_\p} \otimes\mu \co \KhCx(L',\p) \to \KhCx(L,\p).
\end{align*}
Since the $\xi_p$--actions commute with the split and merge maps, we
see that $\tilde\Delta$ and $\tilde\mu$ are both chain maps. Our goal is to determine the induced maps on homology,
\begin{align*}
\tilde\Delta_* \co & \Kh(L,\p) \to \Kh(L',\p) \\
\tilde\mu_* \co & \Kh(L',\p) \to \Kh(L,\p),
\end{align*}
in terms of the module structures given by Proposition \ref{prop: Kh-unlink}.

Assume the components of $L$ and $L'$ are each totally ordered; we do not require the orderings to be compatible in any way. To begin, we will define maps
\[
\xymatrix{\Lambda_{L,\p}\ar@/^1ex/[r]^-{\Delta_{\Lambda}}&\Lambda_{L',\p}\ar@/^1ex/[l]^-{\mu_{\Lambda}}}\qquad\xymatrix{\Gamma_{L}\ar@/^1ex/[r]^-{\Delta_{\Gamma}}&\Gamma_{L'}\ar@/^1ex/[l]^-{\mu_{\Gamma}}}.
\]
that depend (up to sign) on the choices of ordering.

First let us assume that the ordered components of $L$ are
$L_1,L_2\dots,L_k$, and the ordered components of $L'$ are
$L'_0,L'_1,L_2,\dots,L_k$, so that $L_1$ splits into components $L'_0$
and $L'_{1}$, where the split occurs away from $\p$.  Define
$\alpha'_i = \sum_{p \in L'_i} y_p$ for $i=0,1$, so that $\alpha_1 =
\alpha'_0 + \alpha'_1$. For convenience, write $L'_i = L_i$ for $i=2, \dots, k$ and $\p'_i = L'_i \cap \p$ for $i=0, \dots, k$.

Let us denote the algebra generators of $\Gamma_{L}$
by $\gamma_1, \dots, \gamma_k$, and those of $\Gamma_{L'}$ by
$\gamma'_0, \dots, \gamma'_{k}$; that is, $\gamma_i$ acts on
$\KhCx(L,\p)$ as $\zeta_{p_{s(i)}}$ for some $p_{s(i)}\in\p_i$ and
$\gamma'_i$ acts on $\KhCx(L',\p)$ as $\zeta_{p'_{s'(i)}}$ for some
$p'_{s'(i)}\in\p'_i$. We may naturally identify $\Gamma_{L}$ as
the quotient $\Gamma_{L'} / (\gamma'_0 - \gamma'_{1})$; let
\[
\mu_\Gamma \co \Gamma_{L'} \to \Gamma_{L}
\]
be the quotient map. There is then an inclusion
\[
\Delta_\Gamma \co \Gamma_{L} \to \Gamma_{L'}
\]
taking $1 \mapsto \gamma'_{0} - \gamma'_{1}$.

Likewise, since
\[
 \Lambda_{\p,L} = \Lambda_\p/(\alpha'_0 + \alpha'_1, \alpha_2,\dots,\alpha_k)
\]
and
\[
 \Lambda_{\p,L'} = \Lambda_\p/(\alpha'_0,\alpha'_1,\alpha_2,\dots,\alpha_k),
\]
let
\[
\Delta_\Lambda \co  \Lambda_{\p,L} \to  \Lambda_{\p,L'}
\]
denote the obvious quotient map. There is then an inclusion
\[
\mu_\Lambda \co  \Lambda_{\p,L'} \to \Lambda_{\p,L}
\]
taking $1 \in  \Lambda_{\p,L'}$ to $\alpha'_0 \in
\Lambda_{\p,L}$, which equals $-\alpha'_1$.


Note that $\mu_\Gamma$ and $\Delta_\Lambda$ do not depend on the
ordering of the components of $L$ and $L'$, while $\Delta_\Gamma$ and
$\mu_\Lambda$ have only been defined for the above specific
ordering. Now instead, if we start with some arbitrary ordering of the
components of $L$ and $L'$, we may change the orderings to the above
specific ordering by some permutations $\sigma\in S_k$ and $\tau\in
S_{k+1}$, and then we modify the maps $\Delta_\Gamma$ and $\mu_\Lambda$ by
multiplying by $\mathrm{sgn}(\sigma)\mathrm{sgn}(\tau)$, where
$\mathrm{sgn}$ is the permutation signature.

We are now prepared to describe $\tilde \Delta_*$ and $\tilde \mu_*$.

\begin{lemma}\label{lem:split-merge-module}
  The maps $\tilde\Delta_*$ and $\tilde{\mu}_*$ are
  $\Lambda_{\p,L}\otimes\Gamma_{L'}$--module maps, where
  $\Lambda_{\p,L}$ acts on $\Kh(L',\p)$ by extension of scalars by
  $\Delta_{\Lambda}$, and $\Gamma_{L'}$ acts on $\Kh(L,\p)$
  by extension of scalars by $\mu_{\Gamma}$. That is, for any
  $\lambda\in\Lambda_{\p,L}$, $\gamma\in\Gamma_{L'}$,
  $x\in\Kh(L,\p)$, and $y\in\Kh(L',\p)$,
  \begin{align*}
    \tilde\Delta_*((\lambda\otimes\mu_{\Gamma}(\gamma))x)&=(\Delta_{\Lambda}(\lambda)\otimes\gamma)(\tilde\Delta_*(x))\\
    \tilde\mu_*((\Delta_{\Lambda}(\lambda)\otimes\gamma)y)&=(\lambda\otimes\mu_{\Gamma}(\gamma))(\tilde\mu_*(y)).
  \end{align*}
\end{lemma}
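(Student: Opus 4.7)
The plan is to verify the $\Lambda$- and $\Gamma$-equivariance of $\tilde\Delta$ and $\tilde\mu$ at the chain level and then interpret these as the extension-of-scalars identities in the statement.

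For $\Lambda$-equivariance, observe that by construction $\tilde\Delta = \id_{\Lambda_\p}\otimes\Delta$ and $\tilde\mu = \id_{\Lambda_\p}\otimes\mu$, so both are strictly left $\Lambda_\p$-linear on the chain level. By Lemma~\ref{lem:action-on-homology}, the induced $\Lambda_\p$-actions on $\Kh(L,\p)$ and $\Kh(L',\p)$ descend to actions of the quotients $\Lambda_{\p,L}$ and $\Lambda_{\p,L'}$ respectively. Since $\Delta_\Lambda$ was defined to be the natural quotient map between these rings, this yields $\tilde\Delta_*(\lambda x) = \Delta_\Lambda(\lambda)\,\tilde\Delta_*(x)$ on homology, and a symmetric argument handles $\tilde\mu_*$.

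Next, I would verify the chain-level commutation $\tilde\Delta\circ\zeta_p = \zeta_p\circ\tilde\Delta$ for each $p\in\p$, where $\zeta_p$ denotes the relevant action on each side. Using $\zeta_p(a\otimes b) = y_p^*(a)\otimes\xi_p(b)$ from~\eqref{eq: zetap} and the fact that $\tilde\Delta$ is the identity on the $\Lambda_\p$-factor, this reduces to $\Delta\circ\xi_p^L = \xi_p^{L'}\circ\Delta$ on $\KhCx(L)$. For $p$ on a component $L_i$ with $i\geq 2$, this is trivial since the two maps act on different tensor factors of $\KhCx(L)$. For $p\in L_1$ sitting in $L'_j$ ($j\in\{0,1\}$) after the split, the identity becomes $\Delta(x_1 b) = x'_j\Delta(b)$; this is the standard Frobenius identity for comultiplication on $\KhAlg=\Z[x]/x^2$, readily verified on $b\in\{1,x\}$. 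The same argument with $\mu$ in place of $\Delta$ yields $\tilde\mu\circ\zeta_p = \zeta_p\circ\tilde\mu$.

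To convert these into the $\Gamma_{L'}$-module statement, I would use Lemma~\ref{lem:action-on-homology} to pick convenient representatives of the $\gamma$-classes. By the non-degeneracy hypothesis, each $\p'_j$ is nonempty, so we may pick $p\in\p'_j\subseteq\p_1$; then $[\zeta_p]$ represents both $\gamma'_j\in\Gamma_{L'}$ acting on $\Kh(L',\p)$ and $\mu_\Gamma(\gamma'_j) = \gamma_1\in\Gamma_L$ acting on $\Kh(L,\p)$. The chain-level identity above therefore descends to $\tilde\Delta_*(\gamma_1 x) = \gamma'_j\tilde\Delta_*(x)$, and the case $i\geq 2$ is handled identically using a single $p\in\p_i$. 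Combining with the $\Lambda$-equivariance gives the full identity for $\tilde\Delta_*$; the argument for $\tilde\mu_*$ is parallel, invoking the Frobenius identity $\mu(x'_j\otimes b')=x_1\mu(b')$.

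The main obstacle I anticipate is sign bookkeeping: the graded signs in~\eqref{eq: Kh-diff}, the $(-1)^{\epsilon(p)}$ in $\xi_p$, and the permutation signs absorbed into the definitions of $\Delta_\Gamma$ and $\mu_\Lambda$. These should be manageable because the edge parity $\epsilon(p)$ is unchanged between $L$ and $L'$ (the checkerboard colorings agree outside the split), but they require careful tracking through the Frobenius computation and the identification of $[\zeta_p]$ with $\gamma_1$ on the source and with $\gamma'_j$ on the target.
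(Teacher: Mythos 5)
Your proposal is correct and takes the same route as the paper: the paper's proof is a one-liner stating that it is not hard to check that the actions of $y_p$ and $\zeta_p$ commute with $\tilde\Delta$ and $\tilde\mu$ on the chain level, from which the module-map statement follows. Your write-up fills in exactly those verifications (the strict $\Lambda_\p$-linearity of $\id\otimes\Delta$ and $\id\otimes\mu$, the Frobenius identities $\Delta(x_1 b)=x'_j\Delta(b)$ and $\mu(x'_j b')=x_1\mu(b')$, and the passage to $\Lambda_{\p,L}\otimes\Gamma_{L'}$ via Lemma~\ref{lem:action-on-homology}); the sign worry at the end is moot because in Section~\ref{sec: Kh-unlink} planar unlinks are oriented as boundaries of the black regions, so $\epsilon(p)=0$ for all $p$.
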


\begin{proof}
  It is not hard to check that the actions of $y_p$ and $\zeta_p$ on
  $\KhCx(L,\p)$ and $\KhCx(L',\p)$ commute with $\tilde \Delta$ and
  $\tilde \mu$; the result follows.
\end{proof}

\begin{proposition} \label{prop: Kh-edgemaps} Let $(L,\p)$ and
  $(L',\p)$ be pointed planar unlinks as described above, and assume
  that both are non-degenerate. Choose module identifications
  $\Kh(L,\p) \cong \Lambda_{\p,L}\otimes\Gamma_{L}$ and
  $\Kh(L',\p) \cong \Lambda_{\p,L'}\otimes\Gamma_{L'}$ as
  in Proposition~\ref{prop: Kh-unlink}.  Then $\tilde \Delta_*$ and
  $\tilde \mu_*$ are given by
\begin{align}
\label{eq: Kh-split}
\tilde\Delta_* &= \Delta_\Lambda\otimes\Delta_\Gamma  \\
\label{eq: Kh-merge}
\tilde\mu_* &= \mu_\Lambda\otimes\mu_\Gamma.
\end{align}
\end{proposition}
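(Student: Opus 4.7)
The strategy will be to appeal to Lemma~\ref{lem:split-merge-module} to reduce both identities to an evaluation on the cyclic module generator provided by Proposition~\ref{prop: Kh-unlink}, and then to compute the image of the generator directly in the polynomial model of Section~\ref{sec: Kh-unlink}.

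First I would observe that $\Delta_\Lambda$ and $\mu_\Gamma$ are both surjective (as quotient maps), so Proposition~\ref{prop: Kh-unlink} makes $\Kh(L,\p)$ and $\Kh(L',\p)$ cyclic as $\Lambda_{\p,L}\otimes\Gamma_{L'}$--modules, with generators $[\alpha_1\cdots\alpha_k]$ and $[\alpha'_0\alpha'_1\alpha_2\cdots\alpha_k]$ respectively. One then checks that $\Delta_\Lambda\otimes\Delta_\Gamma$ and $\mu_\Lambda\otimes\mu_\Gamma$ are themselves $\Lambda_{\p,L}\otimes\Gamma_{L'}$--module maps with the same twisted actions; this is where the ordering-dependent sign in the definition of $\Delta_\Gamma$ and $\mu_\Lambda$ is pinned down. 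Granted this, Lemma~\ref{lem:split-merge-module} reduces each identity to checking it on the cyclic generator.

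For $\tilde\mu_*$, since the Frobenius multiplication sends $1\otimes 1\mapsto 1$, the cochain $\tilde\mu(\alpha'_0\alpha'_1\alpha_2\cdots\alpha_k)$ is simply the element $\alpha'_0\alpha'_1\alpha_2\cdots\alpha_k$, reinterpreted in $\Lambda_\p\otimes\KhCx(L)$. Using $\alpha'_1=\alpha_1-\alpha'_0$ and $(\alpha'_0)^2=0$ I rewrite $\alpha'_0\alpha'_1=\alpha'_0\alpha_1$, so the image equals $\alpha'_0\cdot(\alpha_1\cdots\alpha_k)$, which is visibly $(\mu_\Lambda\otimes\mu_\Gamma)(1\otimes 1)$ acting on $[\alpha_1\cdots\alpha_k]$. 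For $\tilde\Delta_*$, the comultiplication on the splitting factor sends $1\mapsto x'_0+x'_1$, giving
\[
\tilde\Delta(\alpha_1\cdots\alpha_k)=(\alpha'_0+\alpha'_1)\alpha_2\cdots\alpha_k(x'_0+x'_1).
\]
The key observation is that two of the four resulting terms, namely $\alpha'_0\alpha_2\cdots\alpha_k\,x'_0+\alpha'_1\alpha_2\cdots\alpha_k\,x'_1$, assemble to $d(\alpha_2\cdots\alpha_k)$ in $\KhCx(L',\p)$, because the other potential contributions $\alpha_j x_j\alpha_2\cdots\alpha_k$ with $j\geq 2$ vanish thanks to $\alpha_j^2=0$. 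Modulo this boundary I am left with $[\alpha'_0\alpha_2\cdots\alpha_k\,x'_1+\alpha'_1\alpha_2\cdots\alpha_k\,x'_0]$, which, using the formula $\zeta_p(a\otimes b)=y_p^*(a)\otimes\xi_p(b)$ from Section~\ref{subsec: additional}, I recognise as $(\gamma'_0-\gamma'_1)\cdot[\alpha'_0\alpha'_1\alpha_2\cdots\alpha_k]$, matching $(\Delta_\Lambda\otimes\Delta_\Gamma)(1\otimes 1)$.

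The main obstacle I anticipate is the sign bookkeeping. The actions of $\xi_p$ and $\zeta_p$ each carry signs from $(-1)^{\epsilon(p)}$ and from the Koszul contraction $y_p^*$; the maps $\Delta_\Gamma$ and $\mu_\Lambda$ depend on component orderings through an overall $\mathrm{sgn}(\sigma)\mathrm{sgn}(\tau)$; and the odd bigrading of $y_p$ and $\zeta_p$ forces graded commutativity signs to appear in the left-module compatibilities that underlie Lemma~\ref{lem:split-merge-module}. A careful choice of orderings and of sections $s$ of $\pi$, combined with a systematic application of the Leibniz rule for $y_p^*$ to the monomial $\alpha'_0\alpha'_1\alpha_2\cdots\alpha_k$, should make all of these contributions cancel in pairs; pinning this down explicitly is the one step where the argument is most prone to sign errors.
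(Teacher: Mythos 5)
Your proposal follows essentially the same route as the paper's proof: reduce via Lemma~\ref{lem:split-merge-module} to evaluating $\tilde\mu$ and $\tilde\Delta$ on the cyclic generator $\alpha_1\cdots\alpha_k$ (resp.\ $\alpha'_0\alpha'_1\cdots\alpha'_k$) in the polynomial model, use $\alpha'_0\alpha'_1 = \alpha'_0\alpha_1$ for the merge, and for the split recognize that $\tilde\Delta(\alpha_1\cdots\alpha_k) = (\alpha'_0+\alpha'_1)\alpha_2\cdots\alpha_k(x'_0+x'_1)$ decomposes as $\Khdiff(\alpha'_2\cdots\alpha'_k)$ plus $(\zeta_{p'_{s'(0)}}-\zeta_{p'_{s'(1)}})(\alpha'_0\alpha'_1\cdots\alpha'_k)$. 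Your computation is correct and your decomposition of the split image into a boundary plus a $\Gamma$-action term matches the paper's identity exactly.
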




\begin{proof}
  We are free to choose the ordering of the components of $L$ and $L'$
  since if we modify the ordering by permutations $\sigma\in S_k$ and
  $\tau\in S_{k+1}$, then both sides of the equation get multiplied by
  $\mathrm{sgn}(\sigma)\mathrm{sgn}(\tau)$. So order the components of
  $L$ and $L'$ as before, so that $1$ denotes the homology class of
  $\alpha_1 \cdots \alpha_k$ in $\Kh(L,\p)$ and the homology
  class of $ \alpha'_0 \alpha'_1 \cdots \alpha'_k$ in
  $\Kh(L',\p)$. We have:
  \begin{align*}
\tilde \Delta(\alpha_1 \cdots \alpha_{k})
    &= \alpha_1 \cdots \alpha_{k}(x'_0 + x'_1)  \\
    &=  (\alpha'_0 + \alpha'_1) \alpha'_{2} \cdots \alpha'_{k}(x'_0 + x'_1)   \\
    &= (\zeta_{p'_{s'(0)}} - \zeta_{p'_{s'(1)}}) (\alpha'_0 \alpha'_1 \alpha'_{2} \cdots
    \alpha'_{k}) +  \Khdiff (\alpha'_2 \cdots \alpha'_k).
  \end{align*}
  Hence, on the level of homology,
  \[
  \tilde\Delta_*(1) = \big(1\otimes (\gamma'_0 - \gamma'_1)\big)(1) = (\Delta_\Lambda\otimes\Delta_\Gamma)(1),
\]
and since $\mu_{\Gamma}$ is surjective, by
Lemma~\ref{lem:split-merge-module} the general case follows.

Similarly, we have:
\begin{align*}
\tilde \mu(\alpha'_0\alpha'_1 \cdots\alpha'_{{k}})
 &= \alpha'_0\alpha'_1 \cdots\alpha'_{{k}}\\
&= \alpha'_0(\alpha_1 - \alpha'_0)\alpha_{2} \cdots \alpha_{{k}}   \\
&= \alpha'_0(\alpha_1 \cdots \alpha_{k}).
\end{align*}
Thus, on homology,
\[
\tilde\mu_*(1) = (\alpha'_0\otimes 1)(1) = (\mu_\Lambda \otimes \mu_{\Gamma})(1);
\]
and since $\Delta_{\Lambda}$ is surjective, by
Lemma~\ref{lem:split-merge-module} we are done.
\end{proof}


\subsection{The cubical filtration}\label{subsec: kh-cube-filtration}

We return to the case of a general pointed link diagram $(L,\p)$. Let
$\FF$ be the filtration on $\KhCx(L,\p)$ defined so that the summand
$\KhCx(L_v,\p)$ (where $v \in \{0,1\}^n$) is in filtration level
$\abs{v}$.

The associated graded complex can be identified with \( \bigoplus_{v
  \in \{0,1\}^n} \KhCx(L_v,\p) \). The two complexes agree up to signs
of the differentials. To wit, if $p\in\p$ lies on an edge of $L$ of
parity $\epsilon(p)$, the $\xi_p$--action on $\KhCx(L)$ comes with a
sign of $(-1)^{\epsilon(p)}$; on the other hand, we have oriented the
planar unlinks $L_v$ as the boundary of the black regions, so the
$\xi_p$--actions on $\KhCx(L_v)$ always come with a positive
sign. Nevertheless, we still have an isomorphism
\[
E_0(\KhCx(L,\p), \Khdiff, \FF) \cong \bigoplus_{v \in \{0,1\}^n} \KhCx(L_v,\p),
\]
given by
\[
a\otimes b\mapsto (-1)^{\epsilon(p_{i_1})+\dots+\epsilon(p_{i_k})}a\otimes b\qquad\text{ for $a=y_{p_{i_1}}\wedge\cdots\wedge y_{p_{i_k}}\in\Lambda_\p$, $b\in\KhCx(L)$.}
\]
By a slight abuse of notation, in this subsection, whenever we talk
about $\KhCx(L,\p)$, we mean the complex obtained by post-composing
with the above isomorphism.

The $d_0$ differential on each summand is the standard differential on
$\KhCx(L_v,\p)$. Therefore if each $L_v$ is non-degenerate, the
$E_1$ page is
\[
E_1(\KhCx(L,\p), \Khdiff, \FF) = \bigoplus_{v \in \{0,1\}^n} \Kh(L_v,\p)\cong \bigoplus_{v \in \{0,1\}^n} (\Lambda_{\p,L_v}\otimes\Gamma_{L_v}),
\]
where the second identification is via Proposition~\ref{prop:
  Kh-unlink}, and depends only the ordering of the components of $L_v$
for each $v$.

Let $\tilde{d}_1$ be the sum, taken over all pairs $u \iscovered v$,
of the maps
\[
\id_{\Lambda_{\p}} \otimes\Khdiff^{u,v} \co \KhCx(L_u,\p) \to \KhCx(L_{v},\p).
\]
Then the $d_1$ differential is the map induced on the homology of
$(E_0,d_0)$ by the map
\[
a\otimes b \mapsto (-1)^{\homgr(a)}\tilde{d}_1(a\otimes b)\qquad\text{ for $a\in\Lambda_\p$, $b\in\KhCx(L)$.}
\]
Since $\id_{\Lambda_{\p}} \otimes\Khdiff^{u,v}$ is either
$(-1)^{s_{u,v}+n_-}\tilde \Delta$ or $(-1)^{s_{u,v}+n_-}\tilde \mu$
according to whether the edge $u \iscovered v$ is a split or a merge,
the induced map on homology is given by Proposition~\ref{prop:
  Kh-edgemaps}. This completely determines the $d_1$
differential. Specifically, the component of the $d_1$ differential
from $(\Lambda_{\p,L_u}\otimes\Gamma_{L_u,\p})$ to
$(\Lambda_{\p,L_v}\otimes\Gamma_{L_v,\p})$ is given by:
\[
\lambda\otimes\gamma\mapsto(-1)^{s_{u,v}+n_-+\homgr(\lambda)+\homgr(\gamma)+\card{L_u}}\begin{cases}
\Delta_{\Lambda}(\lambda)\otimes\Delta_{\Gamma}(\tilde\gamma)\text{ for
  some $\tilde\gamma$ with $\mu_{\Gamma}(\tilde\gamma)=\gamma$}&\\
&\text{\llap{if $u\iscovered v$ is a split,}}\\
\mu_{\Lambda}(\tilde\lambda)\otimes\mu_{\Gamma}(\gamma)\text{ for
  some $\tilde\lambda$ with $\Delta_{\Lambda}(\tilde\lambda)=\lambda$}&\\
&\text{\llap{if $u\iscovered v$ is a merge.}}
\end{cases}
\]
(Note that we can represent the homology element
$\lambda\otimes\gamma\in\Lambda_{\p,L_u}\otimes\Gamma_{L_u,\p}$
by some cycle $\sum_ia_i\otimes b_i\in\Lambda_\p\otimes\KhCx(L_u)$
with $\homgr(a_i)=\homgr(\lambda)+\homgr(\gamma)+\card{L_u}$ for all $i$.)

To summarize:
\begin{proposition}
  Let $(L,\p)$ be a $k$-component pointed link diagram with $n$
  (ordered) crossings and $m$ (ordered) basepoints, so that for each
  $v\in\{0,1\}^n$, the complete resolution $L_v$ is non-degenerate (see
  Definition~\ref{def:pointed-link}). Then the
  $(\homgr,\intgr)$-graded complex $\KhCx(L,\p)$ (see
  Section~\ref{subsec: mapping-cone-def}) admits a cubical filtration
  by $\{0,1\}^n$ (see Definition~\ref{defn:cubical-chain-complex}).

  For the corresponding (bigraded) spectral sequence converging to
  $\Kh(L,\p)$, the $E_1$-page summand at a vertex $v\in\{0,1\}^n$ is
  isomorphic to
  \[
  \Lambda_{\p,L_v}\otimes\Gamma_{L_v}=\Lambda_{\p,L_v}\otimes\Gamma_{L_v},
  \]
  with the isomorphism only depending on the ordering of the
  components of $L_v$ (see Proposition~\ref{prop: Kh-unlink}). The
  $d_1$-differential along the edge $u\iscovered v$ agrees (up to some
  signs that depend on $u$, $v$, the bigradings, and the ordering of
  the components of $L_u$ and $L_v$) with the following map: For
  $\lambda\in\Lambda_{\p,L_u},\gamma\in\Gamma_{L_u}$,
  \[
  \lambda\otimes\gamma\mapsto\begin{cases}
\Delta_{\Lambda}(\lambda)\otimes\Delta_{\Gamma}(\tilde\gamma)&\text{for
  some $\tilde\gamma\in\Gamma_{L_v}$ with
  $\mu_{\Gamma}(\tilde\gamma)=\gamma$ if $u\iscovered v$ is a split,}\\
\mu_{\Lambda}(\tilde\lambda)\otimes\mu_{\Gamma}(\gamma)&\text{for
  some $\tilde\lambda\in\Lambda_{\p,L_v}$ with
  $\Delta_{\Lambda}(\tilde\lambda)=\lambda$ if $u\iscovered v$ is a merge,}
\end{cases}
\]
(see Proposition~\ref{prop: Kh-edgemaps}).
\end{proposition}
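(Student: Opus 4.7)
The proposition is essentially a summary of what has been established in the preceding discussion of Section~\ref{subsec: kh-cube-filtration}, so my plan is to assemble those ingredients into a coherent proof, being careful about signs.

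First, I would verify that $\FF$ is indeed a cubical filtration in the sense of Definition~\ref{defn:cubical-chain-complex}. By the explicit cubical description of $\KhCx(L,\p)$ given in Section~\ref{subsec: mapping-cone-def}, the chain group decomposes as $\bigoplus_{(u,v) \in \{0,1\}^m \times \{0,1\}^n} y_u \otimes \KhCx(L_v)$. Grouping over $v$, the differential either preserves $v$ (the $\xi_p$-contributions) or increases $v$ to some $v'$ with $v \iscovered v'$ (the Khovanov contributions). This exhibits the required compatibility with the height filtration, so $\FF$ is a genuine cubical filtration.

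Next, I would identify $(E_0, d_0)$. The associated graded at level $\abs{v}$ is the internal complex $\KhCx(L_v, \p)$, once we apply the sign-renormalization $a \otimes b \mapsto (-1)^{\epsilon(p_{i_1}) + \dots + \epsilon(p_{i_k})} a \otimes b$ discussed at the beginning of Section~\ref{subsec: kh-cube-filtration} to reconcile the checkerboard-induced signs on $\xi_p$ in $L$ with the positive signs on $\xi_p$ in $L_v$ (viewed as boundary of the black region). Since each $L_v$ is non-degenerate, Proposition~\ref{prop: Kh-unlink} applies, and after choosing an ordering of the components of $L_v$, we get an isomorphism $H_*(\KhCx(L_v, \p)) \cong \Lambda_{\p, L_v} \otimes \Gamma_{L_v}$, depending only on the ordering (up to alternating permutations).

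For the $d_1$ differential, I would invoke the fact that the component of the Khovanov differential along an edge $u \iscovered v$ is, up to the sign $(-1)^{s_{u,v} + n_-}$, either a split map $\tilde\Delta$ or a merge map $\tilde\mu$. Combining the additional sign $(-1)^{\homgr(a)}$ from the total differential on $\KhCx(L,\p)$ (see equation~\eqref{eq: Kh-diff}) together with the sign-renormalization above (which contributes a sign depending only on $u$ and the parities of the edges through $\p$), Proposition~\ref{prop: Kh-edgemaps} gives the induced map on $E_1$ as $\Delta_\Lambda \otimes \Delta_\Gamma$ or $\mu_\Lambda \otimes \mu_\Gamma$, multiplied by the appropriate sign. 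A representative lift argument is needed here: an $E_1$-class $\lambda \otimes \gamma$ must be represented by a cycle $\sum_i a_i \otimes b_i$ with $\homgr(a_i) = \homgr(\lambda) + \homgr(\gamma) + \abs{L_u}$, which accounts for the extra $\homgr(\lambda) + \homgr(\gamma) + \abs{L_u}$ appearing in the sign formula.

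The main subtlety, and the step requiring the most care, is bookkeeping for all the signs: the cubical sign $(-1)^{s_{u,v}}$, the global $(-1)^{n_-}$, the $(-1)^{\homgr(a)}$ from~\eqref{eq: Kh-diff}, the checkerboard parities $\epsilon(p)$ that get absorbed into the $E_0$-identification, and the sign ambiguity in the module identification of Proposition~\ref{prop: Kh-unlink} depending on component orderings. Because the proposition only asserts that the $d_1$ differential agrees ``up to some signs that depend on $u$, $v$, the bigradings, and the ordering of the components,'' we are free to absorb all these contributions into the permitted ambiguity, so no further explicit sign computation is required beyond showing that the unsigned form of the map is correct.
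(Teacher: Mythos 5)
Your proposal is correct and mirrors the paper's approach: the proposition is stated as a summary (prefaced by ``To summarize:'') of the preceding paragraphs in Section~\ref{subsec: kh-cube-filtration}, which establish exactly the steps you outline --- the cubical filtration from the decomposition of $\KhCx(L,\p)$, the sign-renormalization identifying $E_0$ with $\bigoplus_v \KhCx(L_v,\p)$, the $E_1$-page identification via Proposition~\ref{prop: Kh-unlink}, the explicit $d_1$ via Proposition~\ref{prop: Kh-edgemaps} with the $(-1)^{s_{u,v}+n_-}$ and $(-1)^{\homgr(a)}$ factors, and the representative-lift remark about $\homgr(a_i)=\homgr(\lambda)+\homgr(\gamma)+\card{L_u}$. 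Your observation that the stated sign ambiguity permits absorbing all the bookkeeping is likewise consistent with the paper's framing.
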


\section{Background on knot Floer homology} \label{sec: HFK}

In this section, we review a few facts about knot Floer homology. Unlike in many other papers, we work with coefficients in $\Z$ in order to illustrate certain connections with the Khovanov homology for pointed links discussed above.

\subsection{Orientation systems and integer coefficients}

Given a non-degenerate, oriented, pointed link $\LL = (L,\p)$ in $S^3$, consider the sutured $3$-manifold $Y(\LL) = S^3 \minus \nbd(L)$, with the sutures $\gamma_\p \subset \partial Y(\LL)$ defined to be a collection of $2\abs{\p}$ meridians of $L$, arranged such that $R_-(\gamma_\p)$ consists of a meridional annulus for each point of $\p$ and $R_+(\gamma_\p)$ consists of a meridional annulus for each component of $L \minus \p$. Let $\nextpt \co \p \to \p$ be the bijection that sends each point of $\p$ to the point that follows it as one traverses $L$ according to the orientation. Let us assume that $L$ has $l$ components $L_1, \dots, L_l$, and that $\p = \{p_1, \dots, p_m\}$.

Let $\HH = (\Sigma, \bm\alpha, \bm\beta, \w, \z)$ be a multi-pointed Heegaard diagram for the pointed link $\LL$. (We follow the notation in \cite[Section 3]{BaldwinLevineSpanning}, except that we use $\w$ and $\z$ in place of $\mathbb{O}$ and $\mathbb{X}$, respectively.) We may label the basepoints $\w = \{w_p \mid p \in \p\}$ and $\z = \{z_p \mid p \in \p\}$, so that $w_p$ and $z_p$ are contained in the same component of $\Sigma \minus \bm\alpha$, and $z_p$ and $w_{\nextpt(p)}$ are contained in the same component of $\Sigma \minus \bm\beta$. Let us label these components $A_1, \dots, A_m$ and $B_1, \dots, B_m$, such that $w_{p_i} \in A_i \cap B_i$. Observe that each $A_i$ or $B_i$ has genus $0$.

Let $\Pi_\HH$ denote the group of periodic domains, and let $\Pi_\HH^0$ denote the subgroup of periodic domains that avoid all the basepoints. It is not hard to see that $\Pi_\HH$ is generated by $A_1, \dots, A_m, B_1, \dots, B_m$, with the single relation that $A_1 + \dots + A_m = B_1 + \dots + B_m$. Moreover, $\Pi_\HH^0$ is $(l-1)$--dimensional, generated by periodic domains $P_i = \sum_{j \mid p_j \in \p \cap L_i} (A_j-B_j)$, which satisfy the relation $P_1 + \cdots + P_l = 0$.

As a notational convenience, we may record the basepoint multiplicities of each Whitney disk $\phi$ using formal linear combinations of the marked points $p$. That is, we define
\[
\mults_\w(\phi) = \sum_{p \in \p} n_{w_p}(\phi) \cdot p \quad \text{and} \quad
\mults_\z(\phi) = \sum_{p \in \p} n_{z_p}(\phi) \cdot p.
\]
We also write
\[
n_\w(\phi) = \sum_{p \in \p} n_{w_p}(\phi) \quad \text{and} \quad
n_\z(\phi) = \sum_{p \in \p} n_{z_p}(\phi) .
\]

Let $\CFKtil(\HH)$ denote the $\Z$--module generated freely by points of $\T_\alpha \cap \T_\beta$. This complex comes equipped with absolute Maslov and Alexander gradings, which are characterized up to overall shifts by, for any $\x,\y \in \T_\alpha \cap \T_\beta$ and any $\phi \in \pi_2(\x,\y)$, the formulas
\[
M(\x) - M(\y) = \mu(\phi) - 2n_\w(\phi) \quad \text{and} \quad A(\x) - A(\y) = n_\z(\phi) - n_\w(\phi)
\]
where $\mu(\phi)$ denotes the Maslov index of $\phi$. Throughout this section, the bigradings of elements and of maps will be given as (Maslov, Alexander). The delta grading is defined by
\[
\delta(\x) = A(\x) - M(\x).
\]

In order to work with coefficients in $\Z$, we must specify a \emph{system of orientations}: a choice, for each $\x,\y \in \T_\alpha \cap \T_\beta$ and each $\phi \in \pi_2(\x,\y)$, of a trivialization of the determinant line bundle $\det(\phi)$ associated to $\phi$, subject to suitable compatibility conditions. (We refer to \cite{Sar-signs} for definitions and basic results.) Such a choice determines orientations on all the moduli spaces $\hat\MM(\phi)$. Two orientation systems are called \emph{strongly equivalent} if they agree on all elements of $\Pi_\HH$, and \emph{weakly equivalent} if they agree on all elements of $\Pi_\HH^0$. Let $\OO_\HH$ (resp.~$\hat\OO_\HH$) denote the set of strong (resp.~weak) equivalence classes of orientation systems, which has order $2^{2m-1}$ (resp.~$2^{l-1}$).

The differential $\partial$ is defined by
\begin{equation} \label{eq: HFK-diff}
\partial(\x) = \sum_{\y \in \T_\alpha \cap \T_\beta} \sum_{ \substack{\phi \in \pi_2(\x, \y) \\ \mu(\phi)=1 \\ \mults_\w(\phi) = \mults_\z(\phi) = 0 }} \#\widehat{\MM}(\phi) \, \x,
\end{equation}
and satisfies $\partial^2=0$; moreover, up to isomorphism, this chain complex depends only on the weak equivalence class of the orientation system. Thus, there are $2^{l-1}$ (potentially) different homology groups, which we denote $\HFKtil(\HH;\Z,\ort)$, for $\ort \in \hat\OO_\HH$. The third author showed that the collection of these homology groups is an invariant of the pointed link $\LL$. To be precise, if $\HH$ and $\HH'$ are related by a sequence of isotopies, handleslides, and stabilizations, there is a bijection $f \co \hat\OO_\HH \to \hat\OO_{\HH'}$ such that
\[
\HFKtil(\HH; \Z,\ort) \cong \HFKtil(\HH';\Z,f(\ort))
\]
for each $\ort \in \hat\OO_\HH$ \cite[Theorem 2.4]{Sar-signs}.

For any $\x \in \T_\alpha \cap \T_\beta$, let $\pi_2^\alpha(\x)$ and $\pi_2^\beta(\x)$ denote the groups of $\alpha$-- and $\beta$--boundary degenerations based at $\x$. Each $\phi \in \pi_2^\alpha(\x)$ or $\pi_2^\beta(\x)$ has a moduli space of holomorphic representatives $\NN(\phi)$ with a codimension-$2$ quotient $\hat\NN(\phi)$. Alishahi and Eftekhary \cite[Lemma 5.2]{AlishahiEftekharySutured} show that there is an orientation system with the following property: For any $\phi \in \pi_2^\alpha(\x)$ (resp.~$\pi_2^\beta(\x)$), the \emph{signed} count $\#\hat\NN(\phi)$ is equal to $1$ if the domain of $\phi$ is equal to $A_i$ (resp.~$B_i$) for some $i$, and $0$ otherwise. (The analogous statement without signs was proven by Ozsv\'ath and Szab\'o \cite[Theorem 5.5] {OSzLink}.) Any two orientation systems having this property must have the same values on all periodic domains; in particular, they are strongly equivalent. We refer to this equivalence class of orientation systems as $\ort_{\can}(\HH)$. By tracing through the arguments from \cite{Sar-signs}, it is not hard to see that the bijection $f$ above takes $\ort_{\can}(\HH)$ to $\ort_{\can}(\HH')$. Thus, $\HFKtil(\HH;\Z,\ort_{\can}(\HH))$ gives a well defined invariant of $\LL$, which we will henceforth just denote by $\HFKtil(\LL;\Z)$ or simply $\HFKtil(\LL)$. When $\p$ contains exactly one point on each component of $L$, we write $\HFK(L;\Z)$ for $\HFKtil(\LL;\Z)$. We write the decomposition according to delta gradings as
\[
\HFK(L) = \bigoplus_{\delta \in \Z + \frac{l}{2}} \HFK{}^\delta(L) \quad \text{and} \quad
\HFKtil(\LL) = \bigoplus_{\delta \in \Z + \frac{l}{2}} \HFKtil{}^\delta(\LL). \]

A key property of $\ort_{\can}(\HH)$, proved by Alishahi and Eftekhary in the proof of \cite[Theorem 5.7]{AlishahiEftekharySutured}, is the following:

\begin{lemma} \label{lemma: boundary-degen}
Let $\x \in \T_\alpha \cap \T_\beta$, and let $\phi \in \pi_2^\alpha(\x) \cup \phi_2^\beta(\x)$ be a boundary degeneration whose domain is either $A_i$ or $B_i$. Then
\begin{equation} \label{eq: boundary-degen}
\sum_{\y \in \T_\alpha \cap \T_\beta} \sum_{\substack{\phi_1 \in \pi_2(\x,\y) \\ \phi_2 \in \pi_2(\y,\x) \\ \phi_1 * \phi_2 = \phi}} \#\hat\MM(\phi_1) \, \#\hat\MM(\phi_2) =
\begin{cases}
  -1 & \phi \in \pi_2^\alpha(\x) \\
  1 & \phi \in \pi_2^\beta(\x). \qedhere
\end{cases}
\end{equation}
\end{lemma}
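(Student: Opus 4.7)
My plan is to realize the sum as the algebraic count of the ends of a one-dimensional moduli space and then invoke the sign data from Alishahi--Eftekhary's Theorem~5.7 that defines $\ort_{\can}(\HH)$.

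First, I would promote the boundary degeneration $\phi$ to a Whitney disk class $\tilde\phi \in \pi_2(\x,\x)$ whose domain is the same region $A_i$ (resp.~$B_i$). Since this region has Maslov index $2$, the reduced moduli space $\hat\MM(\tilde\phi) = \MM(\tilde\phi)/\R$ is a one-dimensional manifold, and its Gromov compactification $\overline{\hat\MM(\tilde\phi)}$ is a compact one-manifold whose signed boundary count vanishes.

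Next, I would identify the two types of ends of $\overline{\hat\MM(\tilde\phi)}$. One family of ends consists of broken trajectories $\tilde\phi = \phi_1 \ast \phi_2$ factoring through an intermediate generator $\y$; these contribute exactly the left-hand side of \eqref{eq: boundary-degen}, since only pairs with $\mu(\phi_1) = \mu(\phi_2) = 1$ survive in $\hat\MM$. The remaining ends are the codimension-one boundary degenerations in which the disk collapses onto either $\T_\alpha$ or $\T_\beta$. Because the total domain of $\tilde\phi$ is $A_i$ (resp.~$B_i$), and any $\alpha$-- or $\beta$--boundary degeneration has domain in the span of the $A_j$'s (resp.~$B_j$'s), the only degeneration that can occur is the original $\phi$ itself, paired with a constant on the opposite torus. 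The signed count of this end is, up to a sign coming from the gluing orientation, the quantity $\#\hat\NN(\phi)$, which equals $+1$ for both $\alpha$-- and $\beta$--degenerations with domain $A_i$ or $B_i$ by the Alishahi--Eftekhary computation that \emph{defines} $\ort_{\can}(\HH)$.

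The main obstacle is sign bookkeeping: showing that the boundary-degeneration end contributes with sign $+1$ when $\phi \in \pi_2^\alpha(\x)$ and $-1$ when $\phi \in \pi_2^\beta(\x)$, so that the vanishing of the total boundary count produces the asserted $-1$ and $+1$ on the right-hand side of \eqref{eq: boundary-degen}. This comes from a gluing analysis for the determinant-line bundle along the boundary-degeneration stratum of $\overline{\hat\MM(\tilde\phi)}$; the asymmetry between the $\alpha$-- and $\beta$--cases reflects the fact that the two halves of $\partial D^2$ enter the orientation of $\MM(\tilde\phi)$ with opposite conventions, and is precisely the sign asymmetry that forces cancellation between the $\alpha$-- and $\beta$--boundary degenerations in the proof of $\partial^2 = 0$. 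Unwinding those conventions against the definition of $\ort_{\can}(\HH)$ via the canonical trivialization of $\det(\phi)$ for $\phi$ a boundary degeneration yields the stated signs.
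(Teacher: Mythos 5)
Your argument reconstructs what the paper delegates to Alishahi--Eftekhary: the paper offers no proof of its own, only the citation to the proof of their Theorem~5.7, and the Gromov-compactification-of-an-index-$2$-moduli-space argument you describe is exactly the mechanism behind that result. The key steps --- promoting $\phi$ to $\tilde\phi \in \pi_2(\x,\x)$ with domain $A_i$ (or $B_i$) of Maslov index $2$ (using genus zero), identifying the two-stage breakings through intermediate generators with the left-hand sum, and observing that since the $A_j$ (resp.~$B_j$) have pairwise disjoint interiors the only boundary degeneration that can fit inside the domain $A_i$ is $A_i$ itself, appearing together with the constant disk at $\x$ --- are all correct, and the appeal to $\#\hat\NN(\phi)=1$ is precisely the defining feature of $\ort_{\can}(\HH)$. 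The one place you are explicit about hand-waving, the gluing-sign asymmetry between the $\alpha$ and $\beta$ boundary-degeneration strata, is genuinely the heart of the matter and the reason the paper outsources this lemma to the reference; making it precise requires unwinding the coherent-orientation conventions for boundary degenerations from Sarkar's and Alishahi--Eftekhary's papers, which your sketch gestures toward but does not carry out. So your proposal is correct as an outline and takes the same route as the cited source, but stops short of the sign verification that constitutes the only nontrivial content of the lemma.
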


We also mention the behavior of $\HFKtil$ under adding a marked point. Let $V = \Z \oplus \Z$, with generators in bigradings $(0,0)$ and $(-1,-1)$. We have:

\begin{lemma} \label{lemma: HFK-samecomponent}
Let $\LL = (L,\p)$ be a pointed link, and suppose $\p$ contains two points $p_0,p_1$ that are on the same component of $L$. Let $\p' = \p \minus \{p_0\}$, and set $\LL' =(L, \p')$. Then
\[
\HFKtil(\LL;\Z) \cong \HFKtil(\LL';\Z) \otimes V.
\]
In particular,
\[
\HFKtil(\LL;\Z) \cong \HFK(L;\Z) \otimes V^{\otimes \abs{\p}-l},
\]
and for any field $\F$,
\[
\rank \HFKtil{}^\delta(\LL;\F) = 2^{\abs{\p}-l} \rank \HFK{}^\delta(L;\F).
\]
\end{lemma}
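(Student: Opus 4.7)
The plan is to construct a multi-pointed Heegaard diagram for $\LL$ in a specific local form so that the chain complex splits as a tensor product with $V$. First, by the basepoint-moving invariance of $\HFKtil$ as a sutured Floer invariant, we may isotope $p_0$ along $L$ until it is adjacent to $p_1$, so that the arc between them on $L$ contains no other point of $\p$. Then there is a Heegaard diagram $\HH = (\Sigma, \bm\alpha, \bm\beta, \w, \z)$ for $\LL$ obtained from some Heegaard diagram $\HH' = (\Sigma', \bm\alpha', \bm\beta', \w', \z')$ for $\LL'$ by attaching a handle on which lie a new $\alpha$-curve $\alpha_0$ and a new $\beta$-curve $\beta_0$ meeting transversely in exactly two points $\theta^+, \theta^-$, with the new basepoints $w_{p_0}, z_{p_0}$ placed in the two bigons bounded by $\alpha_0 \cup \beta_0$. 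Generators of $\CFKtil(\HH)$ are then in natural bijection with pairs $(\x, \theta^\pm)$, where $\x$ is a generator of $\CFKtil(\HH')$.

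For the chain-level splitting, any Whitney disk $\phi$ in $\HH$ with $\mu(\phi) = 1$ counted in $\partial$ satisfies $\mults_\w(\phi) = \mults_\z(\phi) = 0$, which applied to the new basepoints forces the restriction of the domain of $\phi$ to the two bigons to vanish. Since $\alpha_0 \cap \beta_0 = \{\theta^+, \theta^-\}$, this pins the $\theta$-coordinate and exhibits $\phi$ as arising from a Whitney disk $\phi' \in \pi_2(\x,\y)$ in $\HH'$ with $\mults_{\w'}(\phi') = \mults_{\z'}(\phi') = 0$. A direct computation using the Maslov and Alexander grading formulas places $\theta^+$ in relative bigrading $(0,0)$ and $\theta^-$ in relative bigrading $(-1,-1)$, both in delta grading $0$, matching the bigraded structure of $V$.

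The main technical point is the sign comparison. We must verify that the orientation system on $\HH$ obtained by extending $\ort_{\can}(\HH')$ via a fixed choice of trivializations on the two bigon disks lies in the class $\ort_{\can}(\HH)$. By Lemma \ref{lemma: boundary-degen}, this reduces to checking that every $\alpha$-boundary degeneration on $\HH$ with domain $A_j$ contributes $-1$ to the signed count and every $\beta$-boundary degeneration with domain $B_j$ contributes $+1$. Boundary degenerations fall into two classes: those pulled back from $\HH'$ (which inherit the correct signs from canonicity of $\ort_{\can}(\HH')$) and those including pieces of either new bigon on the stabilizing handle (which can be computed explicitly from the local picture, with the freedom to independently choose signs for the two bigon trivializations ensuring both local contributions can be matched to their required values simultaneously). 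This orientation bookkeeping is the main obstacle; the rest of the argument is formal.

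Granted the chain-level isomorphism $\CFKtil(\HH;\Z,\ort_{\can}(\HH)) \cong \CFKtil(\HH';\Z,\ort_{\can}(\HH')) \otimes V$, passing to homology yields the first displayed isomorphism. Iterating $\abs{\p} - l$ times, removing extra basepoints one at a time until every component of $L$ carries exactly one point of $\p$, produces the second isomorphism. The final rank statement follows by tensoring with $\F$, since $V \otimes \F$ has rank $2$ and is supported in a single delta grading.
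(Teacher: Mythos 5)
Your proposal is essentially a self-contained re-derivation of \cite[Theorem~2.5]{Sar-signs}, which is exactly what the paper cites: the paper's proof is a one-line appeal to that theorem plus the observation that the orientation bijection preserves canonical systems, whereas you spell out the local (free) stabilization argument that underlies it. The approach is the same; you are just not hiding the mechanism behind a citation.

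Two technical points in your write-up are slightly off, though neither is fatal. First, adding a basepoint pair on a link component is a \emph{free} stabilization, not a handle attachment: the genus of $\Sigma$ does not change. One adds $\alpha_0,\beta_0$ (and $w_{p_0},z_{p_0}$) in a local region of $\Sigma'$, increasing the number of attaching curves by one each while keeping $g$ fixed. Calling it "attaching a handle" is a standard slip, but worth correcting. Second, in the orientation check you conflate the \emph{defining} property of $\ort_{\can}$ (that $\#\hat\NN(\phi) = 1$ for boundary degenerations with domain $A_i$ or $B_i$, following Alishahi--Eftekhary) with its \emph{consequence} recorded in Lemma~\ref{lemma: boundary-degen} (the $\mp 1$ counts of two-level broken flowlines). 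To verify that the extended orientation system lies in $\ort_{\can}(\HH)$ one should check the former directly. Your freedom count is correct --- the two new periodic domain generators $A_{\mathrm{new}}$ and $B_{\mathrm{new}}$ give exactly the two sign choices needed --- but one also has to account for the fact that the pre-existing $A_i$ and $B_i$ regions adjacent to the stabilization locus are modified, so "pulled back from $\HH'$" needs a word of justification. With those repairs the argument is solid and matches the cited proof.
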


\begin{proof}
This is just \cite[Theorem 2.5]{Sar-signs}, together with the observation that the bijection of weak equivalence classes of orientation systems discussed there respects the canonical systems.
\end{proof}

\subsection{The homology and basepoint actions} \label{subsec: HFK-action}

We now describe two additional algebraic structures on
$\HFKtil(\LL;\Z)$: the $H_1$ action defined by Ni
\cite{NiActions}, and the \emph{basepoint
  action}, a generalization of \cite[Section
3.3]{BaldwinLevineSpanning} and \cite[Section 4]{Sar-action}. Let
$\HH$ be a Heegaard diagram as in the previous section.

Note that we observe the sign conventions for chain maps coming from Section \ref{sec: khovanov}, using the Maslov grading in place of $\homgr$. Specifically, a chain map $f$ on $\CFKtil(\HH)$ whose Maslov grading shift is odd is required to satisfy $f \circ \partial + \partial \circ f = 0$. For two such maps $f,g$, let $[f,g]$ denote their anti-commutator $[f,g] = f \circ g+g \circ f$.

For any immersed $1$-chain $\zeta$ in $\Sigma$ that is in general position with respect to $\bm\alpha \cap \bm\beta$ and has endpoints in $\w \cup \z$, and any Whitney disk $\phi \in \pi_2(\x,\y)$, let $\zeta(\phi)$ denote the intersection number $\zeta \cdot \partial_\alpha \phi$. Adapting the work of Ozsv\'ath and Szab\'o \cite[Section 4.2.5]{OSz3Manifold} for closed $3$-manifolds, Ni defined a linear map $A^\zeta \co \CFKtil(\HH) \to \CFKtil(\HH)$ by
\begin{equation} \label{eq: HFK-Azeta}
A^\zeta(\x) = \sum_{\y \in \T_\alpha \cap \T_\beta} \sum_{ \substack{\phi \in \pi_2(\x, \y) \\ \mu(\phi)=1 \\ \mults_\w(\phi) = \mults_\z(\phi) = 0 }} \#\widehat{\MM}(\phi) \, \zeta(\phi) \, \x.
\end{equation}
Ni proved that $A^\zeta$ is an chain map whose square is zero. Moreover, up to chain homotopy, $A^\zeta$ depends only on the homology class of $\zeta$ in $H_1(Y(L), \partial Y(L))$, which by excision is isomorphic to $H_1(S^3, L)$. Thus, there is an action of $\Lambda^*(H_1(S^3,L))$ on $\HFKtil(\LL)$.

For each $p \in \p$, let $\Psi^p$ be the map that counts disks that go once over $z_p$ and zero times over the other basepoints:
\begin{align}
\label{eq: HFK-Psi}
\Psi^p(\x) = \sum_{\y \in \T_\alpha \cap \T_\beta} \sum_{ \substack{\phi \in \pi_2(\x, \y) \\ \mu(\phi)=1 \\ \mults_\z(\phi) = p \\ \mults_\w(\phi)=0}} \#\widehat{\MM}(\phi) \, \x.
\end{align}

Analogously, let $\Omega^p$ count disks that go once over $w_p$ and zero times over the other basepoints. Observe that the bigrading shifts of $A^\zeta$, $\Psi^p$, and $\Omega^p$ are $(-1,0)$, $(-1,-1)$ and $(1,1)$, respectively. We have:

\begin{proposition} \label{prop:HFK-action}
\begin{enumerate}
\item \label{item: HFK-action: Psi-chain}
The maps $\Psi^p$ and $\Omega^p$ are chain maps whose squares are chain null-homotopic.

\item \label{item: HFK-action: comm}
For any $p, p' \in \p$ and any $1$-chain $\zeta$ as above, the following relations hold up to chain homotopy:
\begin{align}
\label{eq: Psi-comm}
[\Psi^p, \Psi^{p'}] &\sim [\Omega^p, \Omega^{p'}] \sim 0 \\
\label{eq: Psi-Azeta}
[\Psi^p, A^\zeta] &\sim [\Omega^p, A^\zeta] \sim 0 \\
\label{eq: Psi-Omega}
[\Psi^p, \Omega^{p'}] &\sim (\delta_{\nextpt(p),p'} - \delta_{p,p'}) \id,
\end{align}
where $\delta$ denotes the Kronecker delta.
\end{enumerate}
\end{proposition}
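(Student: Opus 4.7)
The plan is to prove each identity by the standard method of analyzing the ends of one-dimensional moduli spaces $\widehat{\MM}(\phi)$ for Maslov index $2$ Whitney disks $\phi$. The boundary of $\widehat{\MM}(\phi)$ consists of nodal splittings $\phi_1 * \phi_2$ (with each $\phi_i$ of Maslov index $1$) together with possible $\alpha$- and $\beta$-boundary degenerations, as in \cite{OSz3Manifold, Sar-signs}. In each case I will take as null-homotopy a count of Maslov index $1$ disks with a prescribed multiplicity profile, and the desired identity will follow from the vanishing of the signed count of ends of an appropriate Maslov index $2$ moduli space.

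For the chain map property in part (1), I would take $\phi$ of Maslov index $2$ with profile $(\mults_\z, \mults_\w) = (p, 0)$ (respectively $(0, p)$ for $\Omega^p$); the nodal splittings account for $\partial \Psi^p + \Psi^p \partial$, while all boundary degenerations are excluded on multiplicity grounds since both $A_i$ and $B_i$ have total $\w$-multiplicity $1$, incompatible with $\mults_\w = 0$. The null-homotopy of $(\Psi^p)^2$ (resp.\ $(\Omega^p)^2$) is the count of Maslov index $1$ disks with profile $(2p, 0)$ (resp.\ $(0, 2p)$), and the same multiplicity exclusion applies. The relations $[\Psi^p, \Psi^{p'}] \sim 0 \sim [\Omega^p, \Omega^{p'}]$ use homotopies with profile $(p + p', 0)$ (resp.\ $(0, p + p')$), and $[\Psi^p, A^\zeta] \sim 0 \sim [\Omega^p, A^\zeta]$ use the $\zeta$-weighted Maslov index $1$ counts with profile $(p, 0)$ (resp.\ $(0, p)$); in the $A^\zeta$ cases, the additivity $\zeta(\phi_1 * \phi_2) = \zeta(\phi_1) + \zeta(\phi_2)$ ensures that the $\zeta$-weighted end-count splits correctly across the anticommutator and the $\partial H + H \partial$ terms. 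In all of these cases, boundary degenerations are ruled out by the vanishing of $\mults_\w$ or $\mults_\z$.

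The main substantive case, and the main obstacle, is the identity $[\Psi^p, \Omega^{p'}] \sim (\delta_{\nextpt(p), p'} - \delta_{p, p'}) \id$, where boundary degenerations do contribute. I would introduce a homotopy $H^{p,p'}$ counting Maslov index $1$ disks with profile $(\mults_\z, \mults_\w) = (p, p')$ and analyze ends of $\widehat{\MM}(\phi)$ for Maslov index $2$ disks with this profile. The four types of nodal splittings (according to how $(p, p')$ is distributed between $\phi_1$ and $\phi_2$) produce, up to signs, the two terms of $\Psi^p \Omega^{p'} + \Omega^{p'} \Psi^p$ together with the two terms of $\partial H^{p,p'} + H^{p,p'} \partial$. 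The boundary degenerations that match the multiplicity profile are exactly the domain $A_i$ (requiring $p = p' = p_i$) and the domain $B_i$ (requiring $p = p_i$ and $p' = \nextpt(p_i)$). By Lemma~\ref{lemma: boundary-degen}, these degenerations contribute $-\delta_{p,p'}\id$ and $+\delta_{\nextpt(p), p'}\id$ respectively, producing exactly the right-hand side.

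The hard part of the argument is the careful bookkeeping of signs under the canonical orientation system $\ort_{\can}(\HH)$: one must verify that the four types of nodal splittings combine into the anticommutator $\Psi^p \Omega^{p'} + \Omega^{p'} \Psi^p$ (matching the $[\,\cdot\,,\,\cdot\,]$ convention fixed in Section~2) rather than the commutator, and that the opposite-sign contributions of $\alpha$- versus $\beta$-boundary degenerations in Lemma~\ref{lemma: boundary-degen} produce the \emph{difference} $\delta_{\nextpt(p), p'} - \delta_{p, p'}$ rather than a sum. Once those sign conventions are consistently applied following \cite{Sar-signs}, each identity then follows by a routine end-counting argument.
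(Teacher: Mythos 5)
Your proposal takes essentially the same route as the paper: standard end-counting degeneration arguments for parts (1), \eqref{eq: Psi-comm}, and \eqref{eq: Psi-Azeta} (with boundary degenerations ruled out by multiplicity constraints), followed by a focused analysis of \eqref{eq: Psi-Omega} using the homotopy $H$ counting index-$1$ disks with $\mults_\z = p$, $\mults_\w = p'$, where the $\alpha$- and $\beta$-boundary degenerations with domains $A_i$ and $B_j$ are precisely what produce $-\delta_{p,p'}\id$ and $+\delta_{\nextpt(p),p'}\id$ via Lemma~\ref{lemma: boundary-degen}. The paper is equally brief on sign bookkeeping (deferring to \cite{Sar-signs} and Lemma~\ref{lemma: boundary-degen}), so your proposal matches in both strategy and level of rigor.
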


\begin{proof}
Each of these statements follows from a standard degeneration argument; most details are left to the reader.
We shall focus on \eqref{eq: Psi-Omega}. Let $H$ be the map that counts holomorphic disk $\phi$ with $\mu(\phi)=1$, $\mults_\z(\phi) = p$, and $\mults_\w(\phi) = p'$. Consider the moduli spaces of index-$2$ disks with the same basepoint conditions. In addition to the terms that contribute
\[
H \circ \partial + \partial \circ H + \Psi^p \circ \Omega^{p'} + \Omega^{p'} \circ \Psi^p,
\]
there may be additional boundary degeneration terms if $z_p$ and $w_{p'}$ lie in the same component of $\Sigma \minus \bm\alpha$ or $\Sigma \minus \bm\beta$, which occurs precisely when $p' = p$ or $p' = \nextpt(p)$, respectively. The signs of these terms are determined by Lemma \ref{lemma: boundary-degen}. \end{proof}

For reasons that will become clear in Section \ref{sec: cube}, we will mostly ignore the $\Omega^p$ maps and focus only on the $\Psi^p$ maps. Proposition \ref{prop:HFK-action} thus gives $\HFK(\LL)$ the structure of a left $\Lambda_\p \otimes \Lambda^*H_1(S^3,L)$--module, where $y_p \in \Lambda_\p$ acts by $(\Psi^p)_*$. Furthermore, by studying the effects of isotopies, handleslides, and stabilizations, we see as in \cite[Proposition 3.6]{BaldwinLevineSpanning} that this module structure is a link invariant.

%
%
%
%
%
%

\begin{remark} \label{rmk: A-infty}
The $\Lambda_\p$-module structure of $\HFKtil(\LL)$ is the consequence of a more complicated algebraic structure on  $\CFKtil(\HH)$. We work here with coefficients in $\Z_2$ for convenience. For any formal linear combination $\a = \sum_{p \in \p} a_p \cdot p$ with nonnegative integer coefficients, define a map $\Psi^\a$ by
\begin{equation} \label{eq: Psi-higher}
\Psi^\a(\x) = \sum_{\y \in \T_\alpha \cap \T_\beta} \sum_{ \substack{\phi \in \pi_2(\x, \y) \\ \mu(\phi)=1 \\ \mults_\z(\phi) = \a \\ \mults_\w(\phi)= 0 }} \#\widehat{\MM}(\phi) \, \y.
\end{equation}
That is, $\Psi^\a$ counts disks with $n_{z_p}(\phi) = a_p$ which avoid all the $w$ basepoints. In particular, $\Psi^0$ is the differential, $\Psi^{2p}$ is the null-homotopy of $(\Psi^p)^2$, and $\Psi^{p+p'}$ is the null-homotopy of $[\Psi^p, \Psi^{p'}]$. These maps satisfy an $A_\infty$-type relation: for any $\a$, we have
\[
\sum_{\b+\c=\a} \Psi^\b \circ \Psi^\c = 0.
\]

Therefore, if we define a non-commutative dg-algebra $\mc{A}_{\p}$
generated freely by $\{ y_\a \mid \a \in (\Z_{\ge 0})^\p \minus \{0\}\} $, with differential given by
\[
\del y_{\a}=\sum_{\substack{\b+\c=\a \\ \b,\c\in (\Z_{\ge0})^\p \minus \{0\}}} y_{\b} y_{\c},
\]
then $\CFKtil(\HH;\Z_2)$ becomes a dg-module over $\mc{A}_{\p}$, where $y_\a$ acts by $\Psi^{\a}$.

By homological perturbation theory, $\CFKtil(\HH;\Z_2)$ can be endowed
with an $A_{\infty}$-module structure over the homology of
$(\mc{A}_{\p},\del)$, which is precisely the exterior algebra
$\Lambda_\p$. This module structure is supposed to be the Koszul dual
to the complex $\operatorname{gCFK}^-(\HH;\Z_2)$, defined over a
polynomial ring $\Z_2[U_1, \dots, U_m]$, in which the differential
counts disks that are allowed to go over the $z$ basepoints and
records these multiplicities with the $U_i$ powers.
\end{remark}

\subsection{Unreduced knot Floer homology}

Given a pointed link $\LL = (L,\p)$, let $\hat\LL = (\hat L, \hat \p)$ be the split union of $L$ with an unknotted component $L_0$ containing a single point $p_0$. We refer to $\HFKtil(\hat\LL)$ as the \emph{unreduced knot Floer homology} of $\LL$. Let $W = \Z \oplus \Z$, with generators in bigradings $(\frac12,0)$ and $(-\frac12,0)$.

Observe that $\hat\LL$ can always be represented with a Heegaard diagram in which $w_{p_0}$ and $z_{p_0}$ are in the same region. As a result, we immediately have $\Psi^{p_0} = \Omega^{p_0} = 0$. Thus, the action of $\Lambda_{\hat \p}$ on $\HFKtil(\hat \LL)$ is completely given by the action of $\Lambda_\p \subset \Lambda_{\hat \p}$.

\begin{lemma} \label{lemma: HFK-unred}
For any non-degenerate pointed link $\LL$, there is an isomorphism of $\Lambda_\p$--modules
\[
\HFKtil(\hat\LL) \cong \HFKtil(\LL) \otimes W.
\]
Therefore, for any field $\F$,
\begin{align*}
\rank \HFKtil{}^\delta(\hat\LL;\F) &= \rank \HFKtil{}^{\delta+1/2} (\LL;\F) + \rank \HFKtil{}^{\delta-1/2} (\LL;\F) \\
&= 2^{\abs{\p}-l} \left( \rank \HFK{}^{\delta+1/2} (L;\F) +
\rank \HFK{}^{\delta-1/2} (L;\F) \right).
\end{align*}
\end{lemma}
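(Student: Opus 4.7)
The strategy is to construct an explicit Heegaard diagram $\hat\HH$ for $\hat\LL$ from a Heegaard diagram $\HH$ for $\LL$, show that $\CFKtil(\hat\HH) \cong \CFKtil(\HH) \otimes W$ as chain complexes of $\Lambda_\p$-modules, and then derive the two rank formulas from this isomorphism together with Lemma \ref{lemma: HFK-samecomponent}.

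First, following the preceding observation, take $\hat\HH$ to be the connected sum of $\HH$ with a standard minimal Heegaard diagram for the single-pointed unknot. The connect-sum procedure introduces a new pair of curves $\alpha_0, \beta_0$ meeting transversely in two points $\theta_+, \theta_-$ at the neck, and the basepoints $w_{p_0}, z_{p_0}$ can be placed in a single common region of $\hat\Sigma \setminus (\hat{\bm\alpha} \cup \hat{\bm\beta})$ (as forced by $\nextpt(p_0) = p_0$). Thus $\T_{\hat\alpha} \cap \T_{\hat\beta}$ is naturally identified with $(\T_\alpha \cap \T_\beta) \times \{\theta_+, \theta_-\}$.

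Next, analyze holomorphic curves in $\hat\HH$ by a neck-stretching argument along the connect-sum tube. A Whitney disk between $\theta_+$ and $\theta_-$ confined to the model piece must cross a region containing both $w_{p_0}$ and $z_{p_0}$ (contributing equal multiplicities there), so no such disk can satisfy $\mults_\w(\phi) = \mults_\z(\phi) = 0$ nontrivially; other model-piece curves are ruled out by planarity. The upshot is that the only disks contributing to $\partial_{\hat\HH}$ on a class $\x \otimes \theta$ are of the form $\phi \times \mathrm{const}$ for $\phi \in \pi_2(\x, \y)$ in $\HH$, giving $\partial_{\hat\HH}(\x \otimes \theta) = \partial_\HH(\x) \otimes \theta$. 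Exactly the same argument, applied with the prescribed multiplicities $\mults_\z(\phi) = p$ for $p \in \p$, yields $\Psi^p_{\hat\HH} = \Psi^p_\HH \otimes \id_W$; hence the splitting respects the full $\Lambda_\p$-action. A local computation on the model piece places the bigradings of $\theta_+, \theta_-$ at $(\tfrac12, 0)$ and $(-\tfrac12, 0)$---the two Alexander gradings agree because $w_{p_0}$ and $z_{p_0}$ lie in the same region, the Maslov gradings differ by $1$ from the local structure, and the normalization is fixed by requiring $\HFKtil$ of a single-pointed unknot to sit in bigrading $(0,0)$---matching the stated module $W$.

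Passing to homology gives $\HFKtil(\hat\LL) \cong \HFKtil(\LL) \otimes W$ as $\Lambda_\p$-modules. The first rank equality follows since tensoring with the bigraded generators of $W$ shifts the delta grading $\delta = A - M$ by $\mp\tfrac12$; the second follows by applying Lemma \ref{lemma: HFK-samecomponent} a total of $\abs{\p} - l$ times to reduce $\HFKtil(\LL)$ to $\HFK(L) \otimes V^{\otimes \abs{\p} - l}$, which contributes the factor $2^{\abs{\p} - l}$. The main technical obstacle is the neck-stretching argument: one must verify rigorously, with signs coming from the canonical orientation system $\ort_{\can}$, that no holomorphic curves mix the $\HH$ and model pieces in an unwanted way. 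This is a standard (if delicate) computation in Heegaard Floer theory, closely analogous to the split-union/connect-sum formulas of Ozsv\'ath--Szab\'o.
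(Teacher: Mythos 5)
Your strategy is the same as the paper's — realize $\hat\HH$ as the connected sum of $\HH$ with a small, standard genus-zero piece, apply the Künneth formula, and then reduce to computing the differential on the model piece — but the execution has a genuine gap at exactly the step where a sign argument is required.

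The gap is in the claim that ``a Whitney disk between $\theta_+$ and $\theta_-$ confined to the model piece must cross a region containing both $w_{p_0}$ and $z_{p_0}$,'' so that the model-piece differential vanishes trivially. This is false for the geometry at hand: with $\alpha_0,\beta_0$ meeting in two points on a genus-zero piece, the complement contains \emph{two} bigons avoiding all basepoints (in the paper's notation these are the positive classes $\phi_1,\phi_2 \in \pi_2(a,b)$). Each has a unique holomorphic representative, so $\partial(a) = (\#\hat\MM(\phi_1) + \#\hat\MM(\phi_2))\,b$ is \emph{not} automatically zero. Over $\Z_2$ the two contributions cancel for free, but over $\Z$ (which is where the lemma is stated) one must show $\#\hat\MM(\phi_1) = -\#\hat\MM(\phi_2)$ with respect to the canonical orientation system $\ort_{\can}$. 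The paper does this by pre-composing with the third bigon $\phi_3 \in \pi_2(b,a)$ passing over $w_0,z_0$ and invoking the boundary-degeneration identity (Lemma \ref{lemma: boundary-degen}): $\phi_1*\phi_3$ and $\phi_2*\phi_3$ are, respectively, a $\beta$-degeneration and an $\alpha$-degeneration, contributing $+1$ and $-1$. Your proposal never confronts this cancellation; the ``main technical obstacle'' you flag — ruling out mixed curves across the neck — is the standard Künneth degeneration argument and is not where the sign subtlety lives. Without the boundary-degeneration computation, the isomorphism $\CFKtil(\hat\HH) \cong \CFKtil(\HH) \otimes W$ over $\Z$ is not established.

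A secondary point of confusion: you describe $\hat\HH$ as a connected sum with a diagram for the \emph{single-pointed} unknot, with new curves introduced by the connect-sum procedure. In the standard Heegaard Floer connected-sum, no curves are added at the neck; the curves $\alpha_0,\beta_0$ must already be present in the second summand. The paper takes the second summand to be a diagram $\HH_0$ for a \emph{two}-component unlink with two basepoint pairs $(w_0,z_0)$ and $(w_1,z_1)$, and forms the connect sum along the region containing $w_1,z_1$; that is the diagram whose $\alpha_0,\beta_0$ you are implicitly using. Getting this right matters, because it is precisely the four-bigon geometry of $\HH_0$ that produces the two cancelling classes $\phi_1,\phi_2$.
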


\begin{proof}
This follows by identifying $\hat\LL$ with the connected of sum of $\LL$ and a two-component unlink and applying the K\"unneth formula for connected sums. To be precise about signs, let $\HH = (\Sigma, \bm\alpha, \bm\beta, \w, \z)$ be a Heegaard diagram for $\LL$. Let $\HH_0 = (S^2, \alpha_0, \beta_0, \{w_0,w_1\}, \{z_0,z_1\})$ be a planar Heegaard diagram such that $\alpha_0$ and $\beta_0$ intersect in two points $a,b$ and divide $S^2$ into four bigons regions, one of which contains $w_0$ and $z_0$ and one of which contains $w_1$ and $z_1$, so that there are two positive classes $\phi_1, \phi_2 \in \pi_2(a,b)$ that avoid the basepoints. (This is the $m=1$ case of Figure \ref{fig: unlink}.) Let $\hat\HH$ be obtained by taking the connected sum of $\HH$ and $\HH_0$, where the connected sum joins the region of $\HH$ containing some basepoint $w_q$ (where $q \in \p$) and the region of $\HH_0$ containing $w_1$ and $z_1$. A standard argument (see, e.g., \cite[Section 6]{OSzProperties}) shows that there is an isomorphism of chain complexes
\[
\CFKtil(\hat\HH;\Z) \cong \CFKtil(\HH;\Z) \otimes \CFKtil(\HH_0;\Z).
\]
Moreover, the maps $\Psi^p$ (for $p \in \p$) respect this decomposition, since they count disks that avoid the connected sum region.

Clearly $\CFKtil(\HH_0;\Z) = \Z a \oplus \Z b$; we must simply verify that the differential on $\CFKtil(\HH_0;\Z)$ vanishes provided that we use the canonical system of orientations. Clearly, $\partial(a) = (\#\hat\MM(\phi_1) + \#\hat\MM(\phi_2)) \, b$ and $\partial(b)=0$. To see that $\partial(a) = 0$, let $\phi_3 \in \pi_2(b,a)$ be the bigon with $n_{w_0} = n_{z_0} = 1$, which has a unique holomorphic representative. By Lemma \ref{lemma: boundary-degen}, we have
\[
\#\hat\MM(\phi_1) \, \#\hat\MM(\phi_3) = 1 \quad \text{and} \quad \#\hat\MM(\phi_2) \, \#\hat\MM(\phi_3) = -1,
\]
so $\#\hat\MM(\phi_1) = -\#\hat\MM(\phi_2)$, as required.
\end{proof}

If the components of $L$ are denote $L_1, \dots, L_l$, let $\zeta_i \in H_1(S^3, \hat L)$ denote the relative homology class of an arc from $L_0$ (the added component of $\hat \LL$) to $L_i$ for $i=1, \dots, l$. Then the exterior algebra $\Lambda^* H_1(S^3,\hat L)$, which acts on $\HFKtil(\hat \LL)$, can be identified with the algebra $\Gamma_L$ from Section \ref{sec: Kh-unlink}. (In that section, $L$ was assumed to be an unlink, but the definition of $\Gamma_L$ makes sense for any $L$.)

\subsection{Unlinks}

We now give a complete description of the module structure of unreduced knot Floer homology for unlinks, which will be needed in discussing the cube of resolutions.

To begin, if $\LL = (L,\p)$ is an $l$-component unlink with $m$ marked points, induction using Lemmas \ref{lemma: HFK-samecomponent} and \ref{lemma: HFK-unred} shows that as a bigraded abelian group,
\[
\HFKtil(\hat \LL; \Z) \cong W^{\otimes l} \otimes V^{\otimes m-l}
\]
The summand in maximal Maslov and Alexander gradings is a $\Z$ in bigrading $(\frac{l}{2},0)$. Let $x_{\max}$ denote a generator of this summand group. Ignoring gradings, we see that $\HFKtil(\hat\LL;\Z) \cong \Z^{2^m}$, irrespective of the number of components of $L$.

We now describe the $\Lambda_\p \otimes \Gamma_L$--module structure on $\HFKtil(\hat\LL; \Z)$.

\begin{proposition} \label{prop: HFK-unlink}
Let $\LL = (L, \p)$ be a non-degenerate, oriented, $k$-component pointed unlink. Then $\HFKtil(\hat\LL;\Z)$ is isomorphic as a $\Lambda_\p \otimes \Gamma_L$--module to $\robar \Lambda_{L,\p} \otimes \Gamma_L$, generated by an element $x_{\max}$ in bigrading $(\frac{k}{2},0)$. In particular, $\HFKtil(\hat\LL;\Z)$ is canonically (up to an overall sign) isomorphic to $\Kh(L,\p)$.
\end{proposition}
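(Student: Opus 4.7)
The plan mirrors the proof of Proposition \ref{prop: Kh-unlink}: reduce by K\"unneth to the one-component situation, then compute directly on a model multi-pointed Heegaard diagram. I would work throughout with the canonical orientation system $\ort_{\can}$ of Alishahi-Eftekhary, so that signed counts of boundary degenerations behave as in Lemma \ref{lemma: boundary-degen}.

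First, since $L$ is a planar unlink, I would choose a multi-pointed Heegaard diagram $\HH$ for $\hat\LL$ assembled as a boundary connected sum of pieces $\HH_0, \HH_1, \ldots, \HH_k$, where $\HH_0$ corresponds to the extra unknot $L_0$ and each $\HH_i$ corresponds to $(L_i, \p_i)$. The connect sum regions can be placed away from all basepoints, and the standard Heegaard Floer K\"unneth argument (as in the proof of Lemma \ref{lemma: HFK-unred}) gives a chain-level splitting of $\CFKtil(\HH;\Z)$ as a tensor product of the $\CFKtil(\HH_i;\Z)$, up to bigrading shifts. By arranging the connect sum necks carefully, each $\Psi^p$ (for $p \in \p_i$) acts only on the $i$th tensor factor, and each arc $\zeta_i$ representing a $\Gamma_L$-generator can be isotoped to pass through only the $\HH_0$--$\HH_i$ neck, so that $A^{\zeta_i}$ acts only on the corresponding pair of factors. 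This reduces us to verifying the claim for each $\hat\LL_i = (L_i, \p_i) \sqcup (L_0, \{p_0\})$.

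For a single component, I would use a model Heegaard diagram---obtained by iterated stabilizations from the simplest two-component-unlink diagram---on which generators, differentials, $\Psi^p$'s, and $A^{\zeta_i}$ can all be counted by hand. Iterating Lemma \ref{lemma: HFK-samecomponent} identifies $\HFKtil(\hat\LL_i;\Z)$ with $W \otimes V^{\otimes|\p_i|-1}$ as a bigraded group, of total rank $2^{|\p_i|}$. The essential content is that the $\Psi^p$'s generate an exterior action, with the single relation $\alpha_i \cdot x_{\max} = \sum_{p \in \p_i} \Psi^p(x_{\max}) = 0$, and that $A^{\zeta_i}$ realizes exterior multiplication by $\gamma_i$. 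The relation $\alpha_i \cdot x_{\max} = 0$ is the key point: I expect it to follow from a standard degeneration argument in which $\sum_{p \in \p_i} \Psi^p(x_{\max})$ is packaged as a sum of disk counts whose total domain is a boundary degeneration along $\sum_{j : p_j \in \p_i} A_j$, which Lemma \ref{lemma: boundary-degen} forces to have vanishing signed count modulo differentials. Once this module structure is in hand, Proposition \ref{prop: Kh-unlink} produces a canonical (up to an overall sign depending on the chosen ordering of the components) isomorphism with $\Kh(L,\p)$.

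The principal obstacle is sign bookkeeping. The K\"unneth splitting, the identification of $\Psi^p$ with the generator $y_p$ of $\Lambda_{\p_i}$, the boundary-degeneration cancellation giving $\alpha_i \cdot x_{\max} = 0$, and the final comparison with the sign conventions from Section \ref{subsec: additional} must all be made consistent under $\ort_{\can}$. The sign-free version of each ingredient is easy, but achieving $\Z$-coefficient consistency---which is what is required for part \eqref{item: Kh-properties: unlink} of Theorem \ref{thm: Kh-properties}---demands careful case-by-case orientation analysis.
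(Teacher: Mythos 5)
Your overall strategy matches the paper's: reduce to the single-component case by a K\"unneth decomposition (connect-summing model diagrams in the region containing $\{w_0,z_0\}$), then compute directly on the explicit multi-pointed Heegaard diagram of Figure \ref{fig: unlink}, where the complex has vanishing differential and rank $2^{|\p_i|}$, and the whole question reduces to pinning down the sign so that $\sum_{p\in\p_i}\Psi^p(\x_{\max})=0$ over $\Z$.

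The gap is in the argument you offer for that last identity. You propose that $\sum_{p\in\p_i}\Psi^p(\x_{\max})$ vanishes because the domains involved ``sum to a boundary degeneration along $\sum_j A_j$,'' with Lemma \ref{lemma: boundary-degen} forcing cancellation. That framing doesn't work. First, $\sum_p \Psi^p(\x_{\max})$ is a chain, not a count of one moduli space; there is no single domain to degenerate. Second, the boundary degenerations governed by Lemma \ref{lemma: boundary-degen} have domains $A_j$ or $B_j$, each of which carries both a $w$- and a $z$-basepoint, whereas the disks counted by $\Psi^p$ avoid all $w$-basepoints; the natural identity a degeneration argument yields is the anti-commutator $[\Psi^p,\Omega^{p'}]=(\delta_{\nu(p),p'}-\delta_{p,p'})\,\id$ of Proposition \ref{prop:HFK-action}\eqref{item: HFK-action: comm}, not a relation among the $\Psi$'s alone. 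Third, in the model diagram $\Psi^1,\dots,\Psi^{m-1}$ are given by small bigons but $\Psi^m$ is a linear combination of larger domains lying in different homotopy classes, so the vanishing $\Psi^m=-(\Psi^1+\cdots+\Psi^{m-1})$ on $\x_{\max}$ is genuinely a sign statement, not a domain-level cancellation.

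The paper pins down those signs using exactly the relation you would discard. Because the model diagram has vanishing differential, the relations of Proposition \ref{prop:HFK-action} hold on the nose. One observes $\Omega^i(\x_{\max})=0$ (forced by maximal Alexander grading), writes $\Psi^m(\x_{\max})=c_1B_1+\cdots+c_{m-1}B_{m-1}$ in the exterior-algebra coordinates, applies each $\Omega^i$ to $(\Psi^1+\cdots+\Psi^m)(\x_{\max})$, and uses $\Omega^i(B_j)=\delta_{i,j+1}-\delta_{i,j}$ to solve $c_j=-1$ for all $j$. If you want to complete your proof as written, you should replace the boundary-degeneration heuristic with this $\Omega$-map bookkeeping (or an equivalent sign-fixing mechanism); as stated, the key step is not established.
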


%

\begin{figure}
 \labellist
 \small
 \pinlabel {{\color{red} $\alpha_1$}} [b] at 78 103
 \pinlabel {{\color{blue} $\beta_1$}} [b] at 139 103
 \pinlabel {{\color{blue} $\beta_{m-1}$}} [b] at 264 103
 \pinlabel {{\color{red} $\alpha_{m}$}} [b] at 329 103
 \pinlabel {{\color{blue} $\beta_{m}$}} [bl] at 347 122
 \pinlabel $w_1$ at 66 71
 \pinlabel $z_1$ at 108 71
 \pinlabel $w_2$ at 170 71
 \pinlabel $z_{m-1}$ at 234 71
 \pinlabel $w_{m}$ at 296 71
 \pinlabel $z_{m}$ at 335 71
 \pinlabel $w_0$ at 391 81
 \pinlabel $z_0$ at 391 61
 \pinlabel $b_1$ [b] at 108 96
 \pinlabel $a_1$ [t] at 108 44
 \pinlabel $b_{m-1}$ [b] at 234 97
 \pinlabel $a_{m-1}$ [t] at 234 44
 \pinlabel $b_{m}$ [bl] at 357 96
 \pinlabel $a_{m}$ [tl] at 357 47
\endlabellist
\begin{center}
\includegraphics{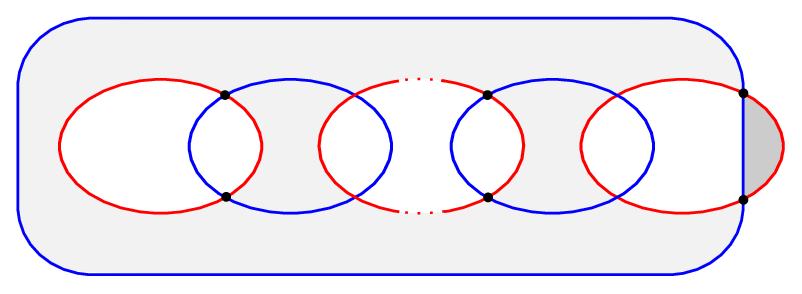}
\caption{Heegaard diagram for an $m$-pointed unknot. A Heegaard diagram for a $k$-component unlink consists of $k$ diagrams of this form, sharing the region containing $w_0$ and $z_0$.} \label{fig: unlink}
\end{center}
\end{figure}

\begin{proof}
First, consider the case where $L$ has a single component, and assume that the points of $\p$ are labeled $1, \dots, m$ according to the orientation of $L$. We may represent $\hat\LL$ with a Heegaard diagram $\HH$ as shown in Figure \ref{fig: unlink}. The basepoints $w_0, z_0$ corresponding to the extra component are placed in the unbounded region. Let $\zeta$ be an arc from $w_0$ to $z_m$ that passes through the dark shaded bigon in Figure \ref{fig: unlink}; this arc represents a generator of $H_1(S^3, \hat L)$.

A generator $\x$ of $\uCFK(\HH)$ consists of a choice of either $a_j$ or $b_j$ for $j = 1,\dots,m$. In particular, the rank of the complex $\uCFK(\HH)$ is $2^m$, which agrees with the rank of its homology, and the differential vanishes. The unique generator in the maximal Alexander and Maslov gradings, $\x_{\max}$, consists of all the points $a_j$.

For any generator $\x$ containing the point $a_m$, there are a pair of Whitney disks connecting $\x$ to $\x' = \x \minus \{a_m\} \cup \{b_m\}$ and avoiding all basepoints, each with a unique holomorphic representative; the domains of these disks are the heavily shaded and lightly shaded regions in Figure \ref{fig: unlink}. Since the differential vanishes, the $\Psi^j$ and $\Omega^j$ maps satisfy the commutation relations from Proposition \ref{prop:HFK-action} on the nose, and not merely up to chain homotopy. In particular, observe that $\Psi^1 + \cdots + \Psi^m$ anticommutes with $\Omega^j$ for each $j=1, \dots, m$. We claim, in fact, that $\Psi^1 + \cdots + \Psi^m=0$, and that this is the only linear relation satisfied by $\Psi^1, \dots, \Psi^m$.

It is not hard to verify directly that these maps are given as follows. Observe that the only domains that count for the maps $\Psi^1, \dots, \Psi^{m-1}$ and $A^{\zeta}$ are small bigons, which have unique holomorphic representatives. Specifically, for each $j=1, \dots, m$, define a linear map $\tau_j$ on $\uCFK(\bm\alpha, \bm\beta)$ by
\begin{align}
\label{eq: unlink: tauj}
\tau_j(\x) =
\begin{cases}
\x \minus \{a_j\} \cup \{b_j\} & a_j \in \x \\ 0  & a_j \not\in \x
\end{cases} 
\end{align}
For each generator $\x$, we then have
\begin{align}
\label{eq: unlink: Psij}
\Psi^j(\x) &= \pm \tau_j(\x) \qquad \text{for }j=1, \dots, m-1 \\
\label{eq: unlink: Azeta}
A^{\zeta}(\x) &= \pm \tau_m(\x) 
\end{align}
where each sign may depend on the choice of $\x$. Moreover, $\Psi^m(\x)$ is a linear combination of $\tau_1(\x), \dots, \tau_{m-1}(\x)$, and there is a positive domain corresponding to each $j$ for which $\tau_j(\x) \ne 0$.

To resolve the sign ambiguities in $A^\zeta$ and $\Psi^1, \dots, \Psi^{m-1}$, we identify $\uCFK(\bm\alpha, \bm\beta)$ with an exterior algebra $\Lambda^*(B_1, \dots, B_m)$, taking the generator $\x_{\max}$ to $1$, such that $\Psi^j$ is given by left multiplication by $B_j$ for $j=1, \dots, m-1$, and $A^\zeta$ is given by left multiplication by $B_m$. Using this notation, we claim that $\Psi^m$ is given by left multiplication by $-(B_1 + \cdots + B_{m-1})$. Since $\Psi^m$ anticommmutes with $\Psi^1, \dots, \Psi^{m-1}$ and $A^\zeta$, it suffices to show that $\Psi^m(1) = -(B_1 + \cdots + B_{m-1})$. Write
\[
\Psi^m(1) = c_1 B_1 + \cdots + c_{m-1} B_{m-1},
\]
for some coefficients $c_1, \dots, c_{m-1}$, so that
\begin{equation} \label{eq: unlink: sumPsi(1)}
(\Psi^1 + \cdots + \Psi^m)(1) = (1+c_1) B_1 + \cdots + (1+c_{m-1}) B_{m-1}.
\end{equation}

We now use the $\Omega^i$ maps. First, observe that $\Omega^i(1) = 0$, since $\x_{\max}$ is in maximal Alexander grading. Therefore, for $i=1, \dots, m$ and $j=1, \dots, m-1$, \eqref{eq: Psi-Omega} shows that
\begin{equation} \label{eq: unlink: Omegai(Bj)}
\Omega^i(B_j) = \Omega^i(\Psi^j(1)) =
\begin{cases}
-1 & i = j \\
1 & i = j+1 \\
0 & \text{otherwise}.
\end{cases}
\end{equation}
Moreover, $\Omega^i$ anticommutes with $\Psi^1 + \cdots + \Psi^m$, so
\[
\Omega^i \circ (\Psi^1 + \cdots + \Psi^m)(1)=0.
\]
It immediately follows that $c_1 = \cdots = c_{m-1} = -1$, as required.

Now, for the general case where $L$ is an $k$-component unlink, we may represent $L$ with a Heegaard diagram $H$ that is the connected sum of $k$ diagrams $H_1, \dots, H_k$ that are each as in Figure \ref{fig: unlink}, taking the connected sums in the regions containing $\{w_0,z_0\}$. The K\"unneth formula then indicates that
\[
\uCFK(\HH) \cong \uCFK(\HH_1) \otimes \cdots \otimes \uCFK(\HH_k).
\]
Moreover, the $\Psi$, $\Omega$, and $A^\zeta$ maps all respect this decomposition, since they only count disks that avoid the shared region. The result then follows easily.
\end{proof}

\subsection{Elementary cobordisms} \label{subsec: HFK-cobordism}

We now consider elementary cobordisms between planar unlinks. Just as in Section \ref{subsec: Kh-cobordism}, suppose that $\LL = (L,\p)$ is an oriented, non-degenerate planar unlink with components $L_1, \cdots, L_k$, and let $\LL' = (L',\p)$, where $L'$ is obtained by splitting $L_1$ into components $L'_0 \cup L'_1$. Assume further that $(L',\p)$ is also non-degenerate (meaning that $L_1$ contains at least two points of $\p$). As above, we write  $L'_i = L_i$ for $i=2, \dots, k$.


For concreteness, let us assume that the points of $\p$ on $L_i$ are labeled $p^i_1, \dots, p^i_{m_i}$ according to the orientation of $L$. Assume further that the split occurs on the segments $[p^1_l, p^1_{l+1}]$ and $[p^1_{m_1}, p^1_1]$, so that $\p \cap L'_0 = \{p^1_1, \dots, p^1_l\}$ and $\p \cap L'_{1} = \{p^1_{l+1}, \dots, p^1_m\}$.

The decorated saddle cobordism from $L$ to $L'$ (a surface in $S^3 \times [0,1]$) naturally gives rise to a sutured cobordism $W$ from $Y(\hat\LL)$ to $Y(\hat\LL')$. To represent $W$ with a Heegaard diagram, let $(S^2,\bm\alpha,\bm\beta)$ be a Heegaard diagram for $\hat \LL$ just as in the proof of Proposition \ref{prop: HFK-unlink}; the portion of the diagram corresponding to $L_1$ is as shown in Figure \ref{fig: unlink}. Let $\gamma$ be the multicurve defined as follows: the circles $\beta_l$ and $\beta_m$ are replaced with a pair of $\gamma$ circles as shown in Figures \ref{fig: split} and \ref{fig: merge}, and every other $\gamma$ circle is simply a small isotopic translate of the corresponding $\beta$ circle.

We shall denote the $w$ and $z$ basepoints corresponding to $p^i_j$ by $w^i_j$ and $z^i_j$, and the corresponding basepoint maps by $\Omega^i_j$ and $\Psi^i_j$.

\begin{figure}
\labellist
 \small
 \pinlabel $w^1_1$ at 58 60
 \pinlabel $z^1_1$ at 94 60
 \pinlabel $w^1_2$ at 148 60
 \pinlabel $z^1_l$ at 204 60
 \pinlabel $w^1_{l+1}$ at 252 60
 \pinlabel $z^1_{m_1-1}$ at 309 60
 \pinlabel $w^1_{m_1}$ at 364 60
 \pinlabel $z^1_{m_1}$ at 395 60
 \pinlabel $\Theta^0_{\beta\gamma}$ [tl] at 213 37
 \pinlabel $\Theta^1_{\beta\gamma}$ [bl] at 213 84
 \large
 \pinlabel {{\color{blue} $\bullet$}} at 86 38
 \pinlabel {{\color{blue} $\bullet$}} at 205 41
 \pinlabel {{\color{blue} $\bullet$}} at 300 38
 \pinlabel {{\color{blue} $\bullet$}} at 407 38
 \pinlabel $\bullet$ at 73.5 61
 \pinlabel $\bullet$ at 214.5 37
 \pinlabel $\bullet$ at 289 61
 \pinlabel $\bullet$ at 404 102
 \pinlabel $\circ$ at 214.5 84
 \pinlabel {{\color{darkgreen} $\bullet$}} at 99 43
 \pinlabel {{\color{darkgreen} $\bullet$}} at 214.5 74
 \pinlabel {{\color{darkgreen} $\bullet$}} at 314 43
 \pinlabel {{\color{darkgreen} $\bullet$}} at 417.5 43
\endlabellist
\includegraphics[width=5in]{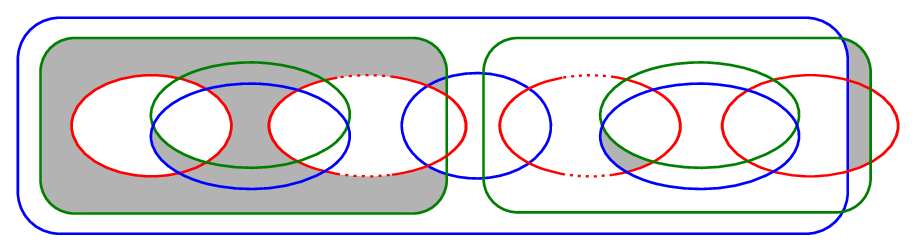}
\caption{Part of the Heegaard diagram $(S^2, \bm\alpha, \bm\beta, \bm\gamma, \w, \z)$ for a split. The points marked {\color{blue} $\bullet$} and {\color{darkgreen} $\bullet$} indicate the generators $\x_{\max} \in \T_\alpha \cap \T_\beta$ and $\y' = \pm A^{\zeta_0}(\y_{\max}) \in \T_\alpha \cap \T_\gamma$, respectively. The points marked $\bullet$ indicate $\Theta^0_{\beta\gamma} \in \T_\beta \cap \T_\gamma$, while $\Theta^1_{\beta\gamma}$ consists of all but one of the same points, together with the point marked $\circ$. The shaded area indicates the domain of $\psi_1 \in \pi_2(\x_{\max}, \Theta^{\beta\gamma}_0, \y')$.} \label{fig: split}
\end{figure}

\begin{figure}
\labellist
 \small
 \pinlabel $w^1_1$ at 58 60
 \pinlabel $z^1_1$ at 94 60
 \pinlabel $w^1_2$ at 148 60
 \pinlabel $z^1_l$ at 204 60
 \pinlabel $w^1_{l+1}$ at 252 60
 \pinlabel $z^1_{m_1-1}$ at 309 60
 \pinlabel $w^1_{m_1}$ at 364 60
 \pinlabel $z^1_{m_1}$ at 395 60
 \pinlabel $\Theta^1_{\gamma\beta}$ [tl] at 213 37
 \pinlabel $\Theta^0_{\gamma\beta}$ [bl] at 213 84
 \large
 \pinlabel {{\color{blue} $\bullet$}} at 86 38
 \pinlabel {{\color{blue} $\bullet$}} at 205 80
 \pinlabel {{\color{blue} $\bullet$}} at 300 38
 \pinlabel {{\color{blue} $\bullet$}} at 407 38
 \pinlabel $\bullet$ at 167 61
 \pinlabel $\bullet$ at 214.5 84
 \pinlabel $\bullet$ at 382 61
 \pinlabel $\bullet$ at 404 19
 \pinlabel $\circ$ at 214.5 37
 \pinlabel {{\color{darkgreen} $\bullet$}} at 99 43
 \pinlabel {{\color{darkgreen} $\bullet$}} at 214.5 46
 \pinlabel {{\color{darkgreen} $\bullet$}} at 314 43
 \pinlabel {{\color{darkgreen} $\bullet$}} at 417.5 43
\endlabellist
\includegraphics[width=5in]{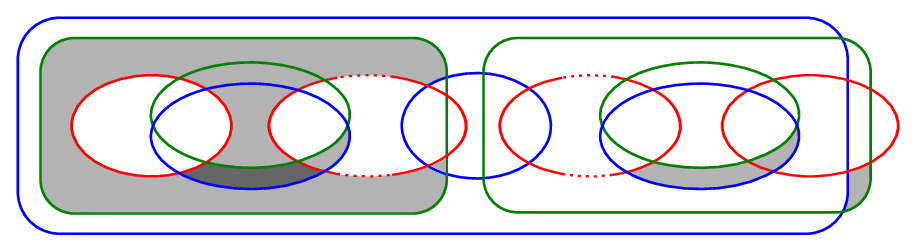}
\caption{Part of the Heegaard diagram $(S^2, \bm\alpha, \bm\gamma, \bm\beta, \w, \z)$ for a merge. The points marked {\color{blue} $\bullet$} and {\color{darkgreen} $\bullet$} indicate the generators $\x' = \pm \Psi^1_l(\x_{\max}) \in \T_\alpha \cap \T_\beta$ and $\y_{\max}$, respectively. The points marked $\bullet$ indicate $\Theta^0_{\gamma\beta} \in \T_\gamma \cap \T_\beta$, while $\Theta^1_{\gamma\beta}$ consists of all but one of the same points, together with the point marked $\circ$.
The shaded area indicates the domain of $\psi_0' \in \pi_2(\y_{\max}, \Theta^0_{\gamma\beta}, \x')$; the darkly shaded region has multiplicity $2$.} \label{fig: merge}
\end{figure}

For $i=1,\dots, k$, let $\zeta_i$ be an arc joining $w_0$ and $z^i_{m_i}$, as in the proof of Proposition \ref{prop: HFK-unlink}, and let $\zeta_0$ be an arc joining $w_0$ and $z^1_l$. The homology classes of $\zeta_1, \dots, \zeta_{k}$ form a basis for $H_1(S^3, \hat L)$, while $[\zeta_0]=[\zeta_1]$; likewise, the homology classes of $\zeta_0, \dots, \zeta_{k}$ form a basis for $H_1(S^3, \hat L')$.

To work with coefficients in $\Z$, we must specify a system of orientations --- namely a choice of trivialization of the determinant line bundle for every disk or triangle. We claim that we can choose this system to agree with the canonical choices for $\CFKtil(\bm\alpha, \bm\beta)$, $\CFKtil(\bm\beta, \bm\gamma)$, and $\CFK(\bm\alpha,\bm\gamma)$ simultaneously. The key observation is that every triply periodic domain is a linear combination of singly periodic domains. Thus, just as above, we choose orientations for the boundary degenerations such that the moduli space of each component of $S^2 \minus \bm\alpha$, $S^2 \minus \bm\beta$, or $S^2 \minus \bm\gamma$ has count $+1$; this determines the orientation for every periodic domain. The rest of the argument then follows as in \cite[Lemma 8.7]{OSz3Manifold}.

Observe that $\uCFK(\bm\beta, \bm\gamma)$ is supported in two adjacent Alexander gradings and has vanishing differential. There are two distinguished generators $\Theta^0_{\beta\gamma}, \Theta^1_{\beta\gamma} \in \uCFK(\bm\beta, \bm\gamma)$, each of which has maximal Maslov grading within its Alexander grading, such that $A(\Theta^1_{\beta\gamma}) = A(\Theta^0_{\beta\gamma})+1$. Define distinguished cycles $\Theta^0_{\gamma\beta}, \Theta^1_{\gamma\beta} \in \uCFK(\bm\gamma, \bm\beta)$ analogously.

Now, for $a \in \{0,1\}$, let
\begin{align*}
f^a \co \uHFK(L, \p) &\to \uHFK(L', \p) \\
g^a \co \uHFK(L', \p) &\to \uHFK(L, \p)
\end{align*}
be the maps given by
\begin{align*}
f^a(\x) &= F_{\alpha\beta\gamma}(\x \otimes \Theta^a_{\beta\gamma}) \\
g^a(\x) &= F_{\alpha\gamma\beta}(\x \otimes \Theta^a_{\gamma\beta}),
\end{align*}
where we have identified the complexes $\uCFK(\bm\alpha, \bm\beta)$ and $\uCFK(\bm\alpha, \bm\gamma)$ with their homologies since the differentials vanish. Our goal is to compute the maps $f^a$ and $g^a$ in terms of the module descriptions given by Proposition \ref{prop: HFK-unlink}. We shall primarily be interested in $f^0$ and $g^0$, but we shall also give a description of $f^1$ and $g^1$.

\begin{proposition} \label{prop: HFK-edgemaps0}
Let $\LL=(L,\p)$ and $\LL'=(L',\p)$ be pointed planar unlinks as described above, and assume that both are non-degenerate. Choose module identifications $\HFKtil(\hat\LL) \cong \robar \Lambda_{L,\p} \otimes \Gamma_L $ and $\HFKtil(\hat\LL{}',\p) \cong \robar \Lambda_{L',\p} \otimes \Gamma_{L'}$ as in Proposition \ref{prop: Kh-unlink}. Then the maps $f^0$ and $g^0$ agree up to sign with the maps $\tilde \Delta_*$ and $\tilde \mu_*$ described in Proposition \ref{prop: Kh-edgemaps}. In particular,
\begin{align}
\label{eq: f0(11)}
f^0(1 \otimes 1) &= \pm 1 \otimes (\zeta_0 - \zeta_1)  \\
\label{eq: g0(11)}
g^0(1 \otimes 1) &= \alpha'_0 \otimes 1 = \mp \alpha'_1 \otimes 1
\end{align}
(using notation from Section \ref{subsec: Kh-cobordism}).
\end{proposition}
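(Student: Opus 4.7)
The plan is to verify the formulas for $f^0(1 \otimes 1)$ and $g^0(1 \otimes 1)$ by a direct count of holomorphic triangles in the Heegaard triple diagrams of Figures \ref{fig: split} and \ref{fig: merge}, and then to deduce the full agreement of $f^0, g^0$ with $\tilde\Delta_*, \tilde\mu_*$ via a module-compatibility argument that mirrors Lemma \ref{lem:split-merge-module} on the Khovanov side.

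First, I would show that $f^0$ and $g^0$ are module maps of the types described in Lemma \ref{lem:split-merge-module}: $f^0$ intertwines the $\Lambda_{\p,L}$-action (extended to $\HFKtil(\hat\LL')$ via $\Delta_\Lambda$) and the $\Gamma_{L'}$-action (extended to $\HFKtil(\hat\LL)$ via $\mu_\Gamma$), and similarly for $g^0$. This follows from standard degeneration arguments: juxtaposing a Whitney disk counted by $\Psi^p$ or $A^\zeta$ onto a vertex of a triangle gives the same signed count regardless of which vertex, up to codimension-one boundary strata that cancel in pairs. Combined with the module identification from Proposition \ref{prop: HFK-unlink}, this reduces the entire proposition to computing $f^0(x_{\max})$ and $g^0(y_{\max})$, since the Khovanov maps $\tilde\Delta_*, \tilde\mu_*$ are determined by their values on generators by Lemma \ref{lem:split-merge-module}.

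For the split map, I would enumerate the embedded index-zero triangles from $(x_{\max}, \Theta^0_{\beta\gamma})$ that avoid all basepoints. The triangle $\psi_1$ depicted in Figure \ref{fig: split} contributes $\pm A^{\zeta_0}(y_{\max})$, which equals $\pm 1 \otimes \gamma'_0$ in the module. By symmetry of the local picture around the split region, there is a mirror-image triangle $\psi_2$ contributing $\pm 1 \otimes \gamma'_1$, and positivity together with the Alexander grading constraint rules out any other contributions. The relative sign of $\psi_1$ and $\psi_2$ can be pinned down by gluing each of them with a suitable small disk so as to recover an $\alpha$-boundary degeneration whose domain is one of the $A_i$ in Lemma \ref{lemma: boundary-degen}; this forces the two contributions to have opposite signs, yielding $f^0(x_{\max}) = \pm 1 \otimes (\gamma'_0 - \gamma'_1)$, matching $\tilde\Delta_*$.

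For the merge map, the analogous enumeration in Figure \ref{fig: merge} involves the triangle $\psi'_0$ (with multiplicity $2$ in the darkly shaded region) contributing $\pm \Psi^1_l(x_{\max})$, together with additional triangles of smaller domains contributing $\pm \Psi^1_j(x_{\max})$ for each $j=1,\ldots,l-1$. Using the explicit description of the $\Psi^1_j$ actions from the proof of Proposition \ref{prop: HFK-unlink} and tracking signs via Lemma \ref{lemma: boundary-degen}, the signed sum of these contributions collapses to $\pm(\Psi^1_1 + \cdots + \Psi^1_l)(x_{\max}) = \pm \alpha'_0 \cdot x_{\max}$, giving $g^0(y_{\max}) = \pm \alpha'_0 \otimes 1$; the second equality in \eqref{eq: g0(11)} is then just the relation $\alpha_1 = \alpha'_0 + \alpha'_1 = 0$ in $\Lambda_{L,\p}$. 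The main obstacle is the sign analysis: ensuring the canonical orientation system $\ort_{\can}$ extends coherently across all three pairs of curves in the Heegaard triple, and that each relation among triangle counts follows from the precise signed version of Lemma \ref{lemma: boundary-degen}. The enumeration in the merge case is the most delicate, since multiple triangles in nearby regions must conspire to produce the full element $\alpha'_0 \cdot x_{\max}$ rather than a single $\Psi^1_j$ term.
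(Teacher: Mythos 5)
Your overall architecture — establish module compatibility of $f^0, g^0$, reduce to evaluating on the top generators $\x_{\max}, \y_{\max}$, then analyze holomorphic triangles — matches the paper's structure, but the two key technical steps are handled differently, and in both places your version has gaps.

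First, the module-map claim. You assert that $f^0, g^0$ intertwine the $\Psi^p$ and $A^\zeta$ actions by ``standard degeneration arguments: juxtaposing a disk onto a vertex gives the same count regardless of vertex, up to codimension-one strata that cancel in pairs.'' This misstates what happens. Degenerating the relevant one-parameter family of triangles produces a term at \emph{every} vertex, including the $\Theta$-vertex, and that term is $F_{\alpha\beta\gamma}(\,\cdot\otimes \Psi^i_j(\Theta^0_{\beta\gamma}))$ — it does not automatically cancel. The commutation $[f^0, \Psi^i_j]=0$ holds precisely because $\Theta^0_{\beta\gamma}$ sits in minimal Alexander grading and $\Psi^i_j$ drops Alexander grading, so $\Psi^i_j(\Theta^0_{\beta\gamma})=0$. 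That observation is the crux of the lemma; it is exactly why $f^1$ does \emph{not} commute with $\Psi^1_l$ and $\Psi^1_{m_1}$ (Proposition \ref{prop: HFK-edgemaps1}), since $\Psi^i_j(\Theta^1_{\beta\gamma})$ can be $\Theta^0_{\beta\gamma}$. Your generic cancellation claim cannot be correct as stated, because if it were, $f^1$ would also be $\Lambda_\p$-linear.

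Second, the triangle count. The paper does \emph{not} enumerate all positive triangle classes contributing to $f^0(\x_{\max})$ or $g^0(\y_{\max})$. Instead it notes that the target of $f^0$ is $\uCFK(\bm\alpha,\bm\gamma)$ on which $A^{\zeta_0}-A^{\zeta_1}$ acts, while the source has $A^{\zeta_0}=A^{\zeta_1}$; hence $f^0(\x_{\max})$ must lie in $\ker(A^{\zeta_0}-A^{\zeta_1})$, which together with the grading constraint pins it down to a scalar multiple of $(A^{\zeta_0}-A^{\zeta_1})(\y_{\max})$. Only then does one need to identify a \emph{single} positive class $\psi_0$ and show its count is $\pm1$, which is done via the relation $\phi_0 * \psi_0 = \psi_1 * \phi_2$ and the cancellation $\#\MM(\phi_1)+\#\MM(\phi_2)=0$. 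Likewise for $g^0$, the constraint $\Psi^1_1+\cdots+\Psi^1_l=0$ on the target forces the answer up to scalar. Your direct enumeration (claiming a ``mirror-image triangle $\psi_2$ by symmetry'' for the split, and a family of ``additional triangles of smaller domains'' for the merge, with all others excluded by positivity) is not justified and is genuinely harder to carry out: nothing in your argument certifies that the enumeration is complete, that the supposed extra merge triangles exist with the right Maslov index, or that their signs conspire correctly. The kernel-constraint route sidesteps all of that, converting a combinatorial enumeration into a one-triangle count plus algebra. You should adopt that approach, or else supply the missing enumeration and sign bookkeeping in detail.
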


\begin{proof}
Because the $\Psi$ maps drop Alexander grading by $1$, we see that $\Psi^i_j(\Theta^0_{\beta\gamma})=0$ and $\Psi^i_j(\Theta^0_{\gamma\beta})=0$ for any $i=1, \dots, k$ and $j=1, \dots, m_i$. It follows that $f^0$ and $g^0$ commute with $\Psi^i_j$ for any $i,j$. (Compare \cite[Equation 3.14]{BaldwinLevineSpanning}.) Additionally, standard degeneration arguments show that $f^0$ and $g^0$ commute with $A^{\zeta_i}$ for $i=0, \dots, k$; that is, they intertwine the module structures of $\CFKtil(\bm\alpha, \bm\beta)$ and $\CFKtil(\bm\alpha, \bm\gamma)$.

Let $\x_{\max} \in \T_\alpha \cap \T_\beta$ and $\y_{\max} \in \T_\alpha \cap \T_\gamma$ denote the unique generators in maximal Alexander and Maslov gradings; these are indicated by the blue and green dots in Figure \ref{fig: merge}, respectively. Since these generate $\HFKtil(\hat\LL)$ and $\HFKtil(\hat\LL')$ as $\Lambda_\p \otimes \Gamma_{L'}$--modules, it suffices to compute $f^0(\x_{\max})$ and $g^0(\y_{\max})$. Specifically, we claim that
\begin{align*}
f^0(\x_{\max}) &= \pm (A^{\zeta_0} - A^{\zeta_1})(\y_{\max}) \\
g^0(\y_{\max}) &= \pm (\Psi^1_1 + \cdots + \Psi^1_l)(\x_{\max}).
\end{align*}
Under our identifications, these translate into \eqref{eq: f0(11)} and \eqref{eq: g0(11)}, respectively.

We first consider the split map $f^0$. Because $A^{\zeta_0} = A^{\zeta_1}$ on $\uCFK(\bm\alpha, \bm\beta)$, we see that $f^0(\x)$ must be in the kernel of $A^{\zeta_0} - A^{\zeta_1}$ in $\uCFK(\bm\alpha, \bm\beta)$. Let $\y' \in \T_\alpha \cap \T_\gamma$ be the generator such that $A^{\zeta_0}(\y_{\max}) = \pm \y'$, and observe that there is a unique positive class $\psi_0 \in \pi_2(\x_{\max}, \Theta^0_{\beta\gamma}, \y')$, avoiding all the basepoints, with $\mu(\psi_0)=0$. (The domain of $\psi_0$ is shown in Figure \ref{fig: merge}.) Therefore, $f^0(\x_{\max})$ is a linear combination of the generators that are in the same bigrading as $\y'$ --- namely $A^{\zeta_0}(\y_{\max}), \dots, A^{\zeta_k}(\y_{\max})$. It follows that
\[
f^0(\x_{\max}) = \pm \#\MM(\psi_0) (A^{\zeta_0} - A^{\zeta_1})(\y_{\max}),
\]
since this is the only such linear combination that is in the kernel of $A^{\zeta_0} - A^{\zeta_1}$.

Rather than analyzing the moduli space of $\psi_0$ directly, consider the following homotopy classes of disks:
\begin{itemize}
\item
Let $\phi_0 \in \pi_2(\Theta^1_{\beta\gamma}, \Theta^0_{\beta\gamma})$ be the class whose domain is the bigon containing $z^1_l$; it has a unique holomorphic representative.

\item
Let $\psi_1 \in \pi_2(\x_{\max}, \Theta^1_{\beta\gamma}, \y')$ be the class whose domain is the triangle containing $z^1_l$ together with small triangles near the other points; it has a unique holomorphic representative.

\item
Let $\phi_1, \phi_2 \in \pi_2(\y_{\max},\y')$ be the two positive classes that avoid all the basepoints, analogous to the two shaded regions in Figure \ref{fig: unlink}, where $\phi_1$ is the small bigon and $\phi_2$ is the other domain. Since the contributions of these domains cancel in $\partial(\y')$, and $\phi_1$ has a unique holomorphic representative, we see that $\#\MM(\phi_2)=\pm 1$.
\end{itemize}
Observe that $\phi_0 * \psi_0 = \psi_1 * \phi_2$, and that these are the only two decompositions of the joined domain. Therefore,
\[
\#\widehat{\MM}(\phi_0) \cdot \#\MM(\psi_0) + \#\MM(\psi_1) \cdot \#\widehat{\MM}(\phi_2) = 0,
\]
so we deduce that $\#\MM(\psi_0)=\pm 1$, as required.

The case of the merge is similar. First, since $\Psi^1_1 + \cdots + \Psi^1_l = 0$ on $\uCFK(\bm\alpha, \bm\gamma)$, $g^0(\y_{\max})$ must be in the kernel of $\Psi^1_1 + \cdots + \Psi^1_l$. Let $\x'$ be the generator with $\Psi^1_l(\x_{\max}) = \pm \x'$. There is a unique positive domain $\psi_0' \in \pi_2(\y_{\max}, \Theta^0_{\gamma\beta}, \x')$ that avoids the basepoints and has $\mu(\psi_0')=0$; its domain is shown in Figure \ref{fig: merge}. Therefore, $g^0(\y_{\max})$ is a linear combination of the generators that are in the same bigrading as $\x'$ --- namely $\Psi^i_j(\x)$ for all $i=1, \dots, k$ and $j=1, \dots, m_i-1$. It follows that
\[
g^0(\y_{\max}) = \pm \#\MM(\psi_0') (\Psi^1_1 + \cdots + \Psi^1_l)(\x_{\max}),
\]
since this is the only such linear combination that is in the kernel of $\Psi^1_1 + \cdots + \Psi^1_l$. A degeneration argument similar to the one used above proves that $\#\MM(\psi_0') = \pm 1$.
\end{proof}

Propositions \ref{prop: HFK-unlink} and \ref{prop: HFK-edgemaps0} complete the proof of statement \ref{item: Kh-properties: unlink} of Theorem \ref{thm: Kh-properties}.

We now turn to $f^1$ and $g^1$. Since the $\Omega^i_j$ maps raise Alexander grading by $1$, we have $\Omega^i_j(\Theta^1_{\beta\gamma})=0$ and $\Omega^i_j(\Theta^1_{\gamma\beta})=0$ for any $i,j$. Therefore, $f^1$ and $g^1$ can be described very neatly in terms of the $\omega$ maps on $\HFKtil(\LL)$ and $\HFKtil(\LL')$, in a manner almost identical to Proposition \ref{prop: HFK-edgemaps0}. However, it can be more convenient to describe these maps using the same module structure as was used for $f^0$ and $g^0$ --- with the caveat that they are not $\Lambda_\p$--linear. For notational convenience, we only work with $\Z_2$ coefficients here.

\begin{proposition} \label{prop: HFK-edgemaps1}
With coefficients in $\Z_2$, the maps $f^1$ and $g^1$ are determined by the following properties:
\begin{enumerate}
\item \label{item: HFK-edgemaps1: H1}
For any $i=0, \dots, k$, $f^1$ and $g^1$ commute with $A^{\zeta_i}$.

\item \label{item: HFK-edgemaps1: psi}
For any $i=1, \dots, k$ and $j=1, \dots, m_i$,
\begin{equation} \label{eq: HFK-edgemaps1: psi}
[f^1, \Psi^i_j] =
\begin{cases}
f^0  & (i,j) = (1,l) \text{ or }  (1,m_1)\\
0 & \text{otherwise}.
\end{cases}
\end{equation}
An analogous equation holds with $g$ in place of $f$.

\item \label{item: HFK-edgemaps1: xmax}
$f^1(\x_{\max}) = 0$ and $g^1(\y_{\max}) = \x_{\max}$.
\end{enumerate}
\end{proposition}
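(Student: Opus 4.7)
The plan is to first verify that $f^1$ and $g^1$ satisfy properties~\eqref{item: HFK-edgemaps1: H1}--\eqref{item: HFK-edgemaps1: xmax}, and then to show these properties uniquely determine the pair. Property~\eqref{item: HFK-edgemaps1: H1} follows by the same triangle-degeneration argument used for $f^0$ and $g^0$ in Proposition~\ref{prop: HFK-edgemaps0}, since the arcs $\zeta_0,\dots,\zeta_k$ may be chosen disjoint from $\bm\beta \cap \bm\gamma$.

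The heart of the matter is property~\eqref{item: HFK-edgemaps1: psi}, which reduces to the chain-level calculation
\begin{equation*}
\Psi^i_j(\Theta^1_{\beta\gamma}) = \begin{cases} \Theta^0_{\beta\gamma} & (i,j) \in \{(1,l),(1,m_1)\}, \\ 0 & \text{otherwise.} \end{cases}
\end{equation*}
This is visible from Figure~\ref{fig: split}: $\Theta^1_{\beta\gamma}$ and $\Theta^0_{\beta\gamma}$ differ in exactly one tensor factor (the $\circ$ versus $\bullet$ point), and the only positive Whitney disks between them that go once over a single $z$--basepoint are the two small bigons containing $z^1_l$ and $z^1_{m_1}$, each of which has a unique holomorphic representative. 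Standard codimension-one boundary analysis of the moduli space of index-one triangles in $\pi_2(\x, \Theta^1_{\beta\gamma}, \y)$ with $\mults_\w = 0$ and $\mults_\z = p^i_j$ then yields the desired relation $[\Psi^i_j, f^1] = f^0$ on homology in the two exceptional cases and vanishes otherwise; the mirror calculation using Figure~\ref{fig: merge} handles $g^1$.

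For property~\eqref{item: HFK-edgemaps1: xmax}, the vanishing $f^1(\x_{\max}) = 0$ is a grading argument: since $A(\Theta^1_{\beta\gamma}) = A(\Theta^0_{\beta\gamma}) + 1$, the map $f^1$ shifts Alexander grading upward by one relative to $f^0$, and $\x_{\max}$ already sits at the top of the Alexander grading. For $g^1(\y_{\max}) = \x_{\max}$, one identifies a unique positive small triangle class $\psi \in \pi_2(\y_{\max}, \Theta^1_{\gamma\beta}, \x_{\max})$ supported near the handleslide region with trivial triangles elsewhere; it has a unique holomorphic representative by the standard small-translate model. For uniqueness, recall from Proposition~\ref{prop: HFK-unlink} that $\HFKtil(\hat\LL)$ is generated as a $\Lambda_\p \otimes \Gamma_L$--module by $\x_{\max}$, so every element can be written as an iterated application of the $\Psi^i_j$ and $A^{\zeta_i}$ to $\x_{\max}$. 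Properties~\eqref{item: HFK-edgemaps1: H1} and~\eqref{item: HFK-edgemaps1: psi} propagate $f^1$ through any such word, expressing its value at an arbitrary element entirely in terms of $f^0$ (known by Proposition~\ref{prop: HFK-edgemaps0}) and $f^1(\x_{\max}) = 0$; the analogous argument handles $g^1$.

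The principal obstacle is the chain-level commutator analysis underlying property~\eqref{item: HFK-edgemaps1: psi}: on the chain level, $\Psi^i_j \circ f^1 + f^1 \circ \Psi^i_j$ picks up contributions from every index-one triangle class with $\mults_\z(\psi) = p^i_j$, and one must carefully organize the remaining contributions into an explicit homotopy so that only the pair corresponding to $\Psi^i_j(\Theta^1_{\beta\gamma}) = \Theta^0_{\beta\gamma}$ survives on homology. Restricting to $\Z_2$ eliminates sign issues, which is why the proposition is stated in that generality.
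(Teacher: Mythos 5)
Your proof is correct and follows essentially the same route as the paper's: property~\eqref{item: HFK-edgemaps1: H1} by the same triangle-degeneration argument as in Proposition~\ref{prop: HFK-edgemaps0}; property~\eqref{item: HFK-edgemaps1: psi} by first computing $\Psi^i_j(\Theta^1_{\beta\gamma})$ via the two bigons through $z^1_l$ and $z^1_{m_1}$ and then degenerating index-one triangles; and property~\eqref{item: HFK-edgemaps1: xmax} by Alexander grading for $f^1(\x_{\max})$ and the unique positive triangle class for $g^1(\y_{\max})$. The only thing you add beyond the paper's proof is an explicit uniqueness argument propagating $f^1$ through the $\Lambda_\p\otimes\Gamma_L$--module generators, which the paper leaves implicit in the phrase ``determined by''; this is a genuine, if minor, improvement. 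One small imprecision: when you say $\x_{\max}$ ``sits at the top of the Alexander grading,'' the relevant fact is that $f^0(\x_{\max})$ is already in maximal Alexander grading of the \emph{target} $\HFKtil(\hat\LL')$, so $f^1(\x_{\max})$, which would have to be one grading higher, vanishes.
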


\begin{proof}
Statement \ref{item: HFK-edgemaps1: H1} follows from a standard degeneration argument, just like in Proposition \ref{prop: HFK-edgemaps0}. For statement \ref{item: HFK-edgemaps1: psi}, a simple count of holomorphic disks shows that
\[
\Psi^i_j(\Theta^1_{\beta\gamma}) =
\begin{cases}
\Theta^0_{\beta\gamma} & (i,j) = (1,l) \text{ or }  (1,m_1)\\
0 & \text{otherwise},
\end{cases}
\]
and \eqref{eq: HFK-edgemaps1: psi} then follows from a degeneration argument as in \cite[Equation 7.5]{BaldwinLevineSpanning}.

For statement \ref{item: HFK-edgemaps1: xmax}, the Alexander grading of $f^1(\x_{\max})$ must be one greater than that of $f^0(\x_{\max})$, which has maximal Alexander grading; hence, $f^1(\x_{\max})=0$. Also, there is a unique positive class in $\pi_2(\y_{\max}, \Theta^1_{\gamma\beta}, \x_{\max})$ that avoids the basepoints; it has a unique holomorphic representative.
\end{proof}

\section{The cube of resolutions for knot Floer homology} \label{sec: cube}

We now consider the cube of resolutions for knot Floer homology, constructed in \cite{BaldwinLevineSpanning}. We find it convenient to rederive this complex in terms of a planar Heegaard diagram rather than the large-genus diagram used there; the former is better adapted to unreduced knot Floer homology. The results of \cite{BaldwinLevineSpanning} all go through, \emph{mutatis mutandis}; we merely note a few of the relevant differences.\footnote{The main notational changes from \cite{BaldwinLevineSpanning} are that we use $\bm\beta$, $\w$, $\z$, and $v$ in place of $\bm\eta$, $\mathbb{O}$, $\mathbb{X}$, and $I$, respectively, and sometimes swap the roles of superscripts and subscripts.} \emph{In this section, all Floer homology groups are taken with coefficients in $\Z_2$.}

\subsection{Construction of the cube} \label{subsec: cube-construction}

Let $L \subset S^3$ be an oriented link diagram with crossings $c_1, \dots, c_n$. Let $\p$ be a set of $2n$ points, one on each edge of the projection, and let $\LL = (L,\p)$. (It is easy to modify the construction if $\p$ contains more than one point on a given edge.) Let $n_+(L)$ and $n_-(L)$ denote the numbers of positive and negative crossings, respectively. For any $v \in \{0,1,\infty\}^n$, let $\LL_v = (L_v,\p)$ denote the pointed link obtained from $L$ by taking the $v_i$ resolution of $c_i$, where $v_i = \infty$ means leaving the crossing unresolved, and let $\hat\LL_v$ denote the split union of $\LL_v$ with a once-pointed unknot with one marked point, placed in the unbounded region of the plane.

\def\perturbshift{0.3}
\begin{figure}
\centering
\begin{tikzpicture}[scale=0.5]

\tikzstyle{reverseclip}=[insert path={(-5,-5) -- (5,-5) -- (5,5) -- (-5,5) -- (-5,-5)}]

\clip (-5-5*\perturbshift,-5-5*\perturbshift) rectangle (5+5*\perturbshift,5+5*\perturbshift);

\begin{scope}
  \clip[rounded corners] (-4,-3) rectangle (-2,3) [reverseclip];
  \clip[rounded corners] (-3,2) rectangle (3,4);
  \clip[rounded corners,
  rotate=-45,scale=1.2] (-1,-3) rectangle (1,3) [reverseclip];
\end{scope}

\begin{scope}
  \clip[rounded corners] (-4,-3) rectangle (-2,3);
  \clip[rounded corners] (-3,2) rectangle (3,4) [reverseclip];
  \clip[rounded corners,
  rotate=-45,scale=1.2] (-1,-3) rectangle (1,3) [reverseclip];
\end{scope}

\begin{scope}
  \clip[rounded corners] (-4,-3) rectangle (-2,3) [reverseclip];
  \clip[rounded corners] (-3,2) rectangle (3,4) [reverseclip];
  \clip[rounded corners,
  rotate=-45,scale=1.2] (-1,-3) rectangle (1,3);
\end{scope}

\begin{scope}
  \clip[rounded corners] (-4,-3) rectangle (-2,3) [reverseclip];
  \clip[rounded corners] (-3,2) rectangle (3,4) [reverseclip];
  \clip[rounded corners,
  rotate=-45,scale=1.2] (-1,-3) rectangle (1,3) [reverseclip];
  \clip (-2.5,-2.5) -- (2.5,2.5) -- (-2.5,2.5) -- (-2.5,-2.5);
\end{scope}

\begin{scope}[rotate=-45,scale=1.2]
\draw[thick,red,rounded corners] (-1.5,2) rectangle (1.5,10);
\draw[thick,red,rounded corners] (-1.5,-10) rectangle (1.5,-2);
\draw[thick,red,rounded corners] (-10,-1.5) rectangle (-2,1.5);
\draw[thick,red,rounded corners] (2,-1.5) rectangle (10,1.5);
\node[red,anchor=south west, inner sep=0] at (2,1.5) {$\alpha$};

\draw[thick,blue,rounded corners] (-1,-3) rectangle (1,3);
\node[blue,anchor=south east,inner sep=0] at (1,0) {$\bar\beta^1_c$};
\end{scope}

\draw[thick,blue,rounded corners] (-5-\perturbshift,-5-\perturbshift) rectangle (5-\perturbshift,5-\perturbshift);
\draw[thick,green!50!black,rounded corners] (-5,-5) rectangle (5,5);
\draw[thick,orange,rounded corners] (-5+\perturbshift,-5+\perturbshift) rectangle (5+\perturbshift,5+\perturbshift);
\node[blue,anchor=north east,inner sep=0] at (-5.2,-5.2) {$\bar\beta^0_c$};

\draw[thick,green!50!black,rounded corners] (-4,-3) rectangle (-2,3);
\node[green!50!black,anchor=west,inner sep=0] at (-4,0) {$\bar\gamma^1_c$};

\draw[thick,orange,rounded corners] (-3,2) rectangle (3,4);
\node[orange,anchor=north,inner sep=0] at (0,3.9) {$\bar\delta^1_c$};

\draw[fill=black] (-2.3,-2.3) circle (0.2);
\draw[fill=black] (-2.3,2.3) circle (0.2);
\draw[fill=black] (2.3,-2.3) circle (0.2);
\draw[fill=black] (2.3,2.3) circle (0.2);

\end{tikzpicture}
\caption{Local Heegaard diagram for a crossing and its resolutions.}
\label{fig: crossing-heegaard}
\end{figure}
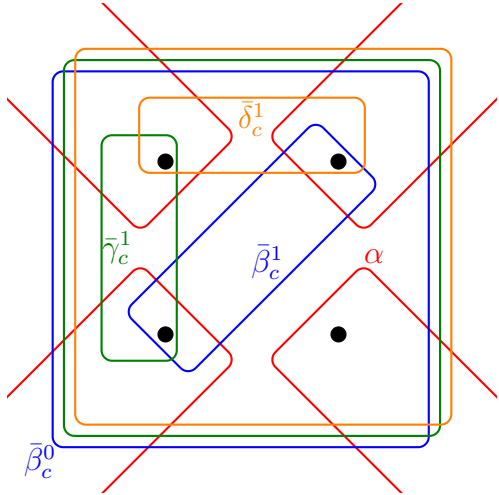 

We form a Heegaard diagram $\HH = (S^2, \bm\alpha, \{\bm\beta(v)\}_{v \in \{0,1,\infty\}^n}, P)$ that encodes $\hat\LL$ and all its resolutions as follows. Identify $S^2$ with the plane together with a point at $\infty$, using the standard orientation of the plane. For each $p \in \p$, let $\alpha_p$ be the boundary of a narrow disk that runs along the edge of $L$ on which $p$ lies, and place two basepoints inside this disk at the two ends of the edge. For each crossing $c$, choose an identification of $c$ with the crossing shown in Figure \ref{fig: resolutions}(A). Let $\bar\beta_c^0$ be the boundary of a small disk around $c$ that contains the four nearby basepoints, and let $\bar\beta_c^1$, $\bar\gamma_c^1$, and $\bar\delta_c^1$ be curves in the interior of this disk, arranged as in Figure \ref{fig: crossing-heegaard}. We also place one pair of basepoints $\{w_0,z_0\}$ in the unbounded region of the plane.

For each $v \in \{0,1,\infty\}^n$, let $\bm\beta(v)$ be the multicurve consisting of curves $\beta_{c_i}^0(v), \beta_{c_i}^1(v)$, where $\beta_{c_i}^0(v)$ is a small translate of $\bar\beta_{c_i}^0$, and $\beta_{c_i}^1(v)$ is a small Hamiltonian translate of either $\bar\beta_{c_i}^1$, $\bar\gamma_{c_i}^1$, or $\bar\delta_{c_i}^1$, according to whether $v_i=0$, $1$, or $\infty$, respectively. Unlike in \cite{BaldwinLevineSpanning}, we do not need to make any further modification to the Heegaard diagram $\HH = (S^2, \bm\alpha, \{\bm\beta(v)\}_{v \in \{0,1,\infty\}^n}, P)$ to achieve admissibility. This follows from an analysis of periodic domains as in \cite[Section 5.2]{BaldwinLevineSpanning}; details are left to the reader. Note that all of the periodic domains are linear combinations of singly periodic domains.

It is easy to see that $(S^2, \bm\alpha, \bm\beta(v), P)$ is a Heegaard diagram for $-\hat \LL_v$ (as an unoriented link). To justify the $-$ sign, consider a crossing where $v_i = \infty$. The Heegaard surface is oriented as the boundary of the $\alpha$ handlebody, which should be pictured as lying below the page. When we follow the standard procedure to produce a knot, the arc joining the upper-left and lower-right basepoints in Figure \ref{fig: crossing-heegaard} crosses an arc connecting either the lower-left or upper-right basepoint to a point outside the local diagram. The former arc is lifted above the page, and the latter is pushed below the page; thus, the crossing is the mirror of the one in Figure \ref{fig: resolutions}(A).

The first half of \cite[Lemma 5.8]{BaldwinLevineSpanning} requires finding four holomorphic triangles whose contributions cancel out; these can be seen readily in Figure \ref{fig: crossing-heegaard}. The convention of ordering the segments of the boundary of a holomorphic triangle clockwise, as in \cite[Section 8]{OSz3Manifold}, thus forces the ordering of the resolutions. Hence, we are forced to work with the mirrored links $-\LL_v$.\footnote{In \cite{BaldwinLevineSpanning}, we instead chose to reverse the convention for the $0$- and $1$-resolutions of a crossing. We also note that Manolescu \cite{ManolescuSkein} uses the standard choice of $0$- and $1$-resolutions, but implicitly uses the opposite orientation for the Heegaard surface; thus, the skein exact sequence should properly be stated for $-L$ rather than $L$ as in \eqref{eq: HFK-skein}.} The remaining results of \cite[Section 5.3]{BaldwinLevineSpanning} deriving the cube of resolutions then go through directly.

For each $v \in \{0,1\}^n$, orient $L_v$ as the boundary of the black regions. This determines a partition of $P$ into $w$ and $z$ basepoints that does not depend on $v$. As a result, the complexes $\uCFK(\bm\alpha, \bm\beta(v))$ and $\uCFK(\bm\beta(v), \bm\beta(v'))$ are equipped with absolute Alexander and Maslov gradings, as well as the algebraic structure described in Section \ref{subsec: HFK-action}.

For an immediate successor pair $v \lessdot v'$, where $v,v' \in \{0,1\}^n$, let $\Theta^0_{v,v'}, \Theta^1_{v,v'} \in \T_{\beta(v)} \cap \T_{\beta(v')}$ be the two distinguished generators: each has maximal Maslov grading among generators in the same Alexander grading, and the Maslov and Alexander gradings of $\Theta^0_{v,v'}$ are each one less than those of $\Theta^1_{v,v'}$.\footnote{In \cite{BaldwinLevineSpanning}, $\Theta^0_{v,v'}$ is called $\Theta_2^{I,I'}$.}

For a successor sequence $v^0 \lessdot \cdots \lessdot v^k$, the map
\[
f_{v^0, \cdots, v^k} \co \uCFK(\bm\alpha, \bm\beta(v^0)) \to \uCFK(\bm\alpha, \bm\beta(v^k))
\]
is defined by
\begin{equation}
f_{v^0, \cdots, v^k} (\x) = F_{\alpha, \beta(v^0), \cdots, \beta(v^k)} \left(\x \otimes \left( \Theta^0_{v^0,v^1} + \Theta^1_{v^0,v^1} \right) \otimes \cdots \otimes \left( \Theta^0_{v^{k-1},v^k} + \Theta^1_{v^{k-1},v^k} \right) \right).
\end{equation}
We may view $f_{v^0, \dots, v^k}$ as the sum of $2^k$ terms $f_{v^0, \dots, v^k}^{\bm\epsilon}$ ranging over all $\bm\epsilon = (\epsilon_1, \dots, \epsilon_k) \in \{0,1\}^k$, where
\begin{equation}
f_{v^0, \cdots, v^k}^{\bm\epsilon} (\x) = F_{\alpha, \beta(v^0), \cdots, \beta(v^k)} \left(\x \otimes \Theta^{\epsilon_1}_{v^0,v^1} \otimes \cdots \otimes \Theta^{\epsilon_k}_{v^{k-1},v^k} \right).
\end{equation}
As seen in \cite[Proposition 6.6]{BaldwinLevineSpanning}, each map $f_{v^0, \dots, v^k}^{\bm\epsilon}$ is homogeneous with respect to the Alexander and Maslov gradings; we shall compute the grading shifts of these maps below.

Let $X$ denote the direct sum
\[
X = \bigoplus_{v \in \{0,1\}^n} \uCFK(\bm\alpha, \bm\beta(v)),
\]
with differential given by
\[
D = \sum_{v^0 \lessdot \cdots \lessdot v^k} f_{v^0, \dots, v^k}.
\]
Let $\Delta$ denote the grading on $X$ obtained by shifting the delta grading on each summand $\uCFK(\bm\alpha, \bm\beta(v))$ by $(\abs{v} - n_+(L))/2$. With respect to this grading, every term in $D$ is homogeneous of degree $1$. The main theorem of \cite{BaldwinLevineSpanning}, suitably modified, states:

\begin{theorem} \label{thm: BLH}
The differential $D$ satisfies $D^2=0$, and the homology of $(X,D)$ is isomorphic to $\uHFK(-\hat\LL)$ with its delta grading.
\end{theorem}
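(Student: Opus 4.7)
The plan is to adapt the proof of the main theorem of~\cite{BaldwinLevineSpanning} to the present planar Heegaard setup. Two separate pieces need verification: $D^2=0$, and the identification $H_*(X,D)\cong\uHFK(-\hat\LL)$ as $\Delta$-graded groups.

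For $D^2=0$, I would run the standard $A_\infty$ degeneration argument for holomorphic polygon maps. Fix $v\leq v''$ in $\{0,1\}^n$ and a successor chain $v=v^0\lessdot\cdots\lessdot v^k=v''$. The corresponding pieces of $D^2$ are indexed by intermediate splittings of this chain, and appear as one family of boundary strata of the $1$-dimensional moduli space of holomorphic $(k+2)$-gons for $(\bm\alpha,\bm\beta(v^0),\ldots,\bm\beta(v^k))$ with inputs $\x\otimes(\Theta^0+\Theta^1)\otimes\cdots\otimes(\Theta^0+\Theta^1)$. The remaining boundary strata either (i) apply the differential to some $(\Theta^0+\Theta^1)$ factor, which vanishes since each $\Theta^a$ is a cycle, or (ii) collapse two adjacent $(\Theta^0+\Theta^1)$ factors via a small triangle. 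The type-(ii) strata are controlled by the local identity
\[
F_{\bm\beta(v),\bm\beta(v'),\bm\beta(v'')}\bigl((\Theta^0+\Theta^1)_{v,v'}\otimes(\Theta^0+\Theta^1)_{v',v''}\bigr)=(\Theta^0+\Theta^1)_{v,v''}
\]
modulo exact terms, verified by inspecting the local pictures in Figure~\ref{fig: crossing-heegaard}; they reorganize into shorter polygon maps that cancel in pairs within $D^2$.

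For the homology computation, I would proceed by induction on the number of crossings~$n$, viewing $(X,D)$ as an iterated mapping cone along one cube direction at a time. Pick a crossing $c_i$ and write
\[
(X,D)\cong\operatorname{Cone}\bigl(X^{v_i=0}\xrightarrow{F_i}X^{v_i=1}\bigr),
\]
where each $X^{v_i=\epsilon}$ is the subcomplex supported at $\{v:v_i=\epsilon\}$ and $F_i$ is the sum of all polygon maps $f_{v^0,\ldots,v^k}$ with $v^0_i=0$ and $v^k_i=1$. By induction, each $H_*(X^{v_i=\epsilon})\cong\uHFK$ of the corresponding partial resolution of~$\LL$, so the remaining task is to identify the induced map $(F_i)_*$ with the connecting map in Manolescu's skein exact triangle~\cite{ManolescuSkein} at~$c_i$. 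Granting this identification, Manolescu's theorem identifies the mapping cone with $\uCFK$ of $\LL$ with $c_i$ unresolved, completing the induction. The $\Delta$-grading statement reduces to a bookkeeping verification that the normalization shift by $(\abs{v}-n_+(L))/2$ on each summand and the known degree shifts of Manolescu's triangle maps are consistent.

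The main obstacle is the identification of $(F_i)_*$ with Manolescu's triangle map. In~\cite{BaldwinLevineSpanning} the analogous comparison was carried out in a large-genus setup via a sequence of handleslides relating the multi-diagram to an explicit skein-triangle diagram; here one must redo it in the planar setting. The local picture at~$c_i$ in Figure~\ref{fig: crossing-heegaard} is already Manolescu's local model, so the essential content is to show that the higher-polygon contributions to $F_i$ (those indexed by $k\geq 2$) do not alter the homology class of the induced map, and that outside a neighborhood of~$c_i$ the multi-diagram is effectively constant in~$v_i$ so that $F_i$ factors as Manolescu's triangle map tensored with the identity on the complement. Both ingredients follow from energy and area estimates for the small Hamiltonian translates used in constructing the $\bm\beta(v)$, paralleling the reasoning in~\cite[Section~7]{BaldwinLevineSpanning}.
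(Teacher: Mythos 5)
The paper's own ``proof'' of Theorem~\ref{thm: BLH} is essentially a citation: it points to \cite[Section~5.3]{BaldwinLevineSpanning}, after noting that admissibility is easier in the planar setting and that the local picture in Figure~\ref{fig: crossing-heegaard} supplies the holomorphic triangles needed for the first half of \cite[Lemma~5.8]{BaldwinLevineSpanning}. So your proposal should be read as a spelling-out of what ``go through directly'' means, compared against the BLH machinery rather than against a detailed argument that the paper itself supplies. Your high-level plan is the right one, but there are two places where it diverges from (or is less careful than) the route the paper intends.

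For $D^2=0$, the moduli-space degeneration framework is correct, but the local identity you write,
$F_{\bm\beta(v),\bm\beta(v'),\bm\beta(v'')}\bigl((\Theta^0+\Theta^1)_{v,v'}\otimes(\Theta^0+\Theta^1)_{v',v''}\bigr)=(\Theta^0+\Theta^1)_{v,v''}$,
is not well-posed: for $v\lessdot\lessdot v''$ the pair of curve systems $\bm\beta(v)$, $\bm\beta(v'')$ differ in \emph{two} crossings, so the minimal-$\delta$ summand of $\uCFK(\bm\beta(v),\bm\beta(v''))$ has $2^2=4$ preferred generators (exactly as appears in the proof of Lemma~\ref{lemma: Psi(allTheta0)}), and there is no canonical element called $(\Theta^0+\Theta^1)_{v,v''}$. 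The actual local lemma behind $D^2=0$ is a cancellation statement among Theta-polygon counts --- the ``four holomorphic triangles whose contributions cancel'' that the paper points to --- and your ``cancel in pairs'' intuition is compatible with it, but the identity you state would need to be replaced by the correct one before the rest of the argument can be run.

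For the homology computation, you propose a crossing-by-crossing mapping-cone induction and an identification of $(F_i)_*$ with Manolescu's connecting map. BLH's route, and hence the paper's, is different in an important way: the Heegaard multi-diagram $\HH$ is deliberately built over $\{0,1,\infty\}^n$, and the key step is to show that the complex supported over $\{0,1,\infty\}$ in any single direction is \emph{acyclic}. That acyclicity simultaneously reproves the skein triangle internal to this fixed multi-diagram and, upon iteration, gives a quasi-isomorphism from $(X,D)$ to $\uCFK(\bm\alpha,\bm\beta(\infty,\dots,\infty))$, with no need to compare with an externally constructed map. Your route instead requires matching $F_i$, defined by polygon counts in a planar multi-diagram, against Manolescu's map, which was built via grid diagrams; this is a real cross-framework comparison, and the workaround you suggest (energy/area estimates on the small Hamiltonian translates) addresses localization of higher polygons, not the identification of two differently-constructed maps. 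So while your induction would likely succeed over $\Z_2$ once that identification is supplied, the paper's route avoids the obstacle entirely; this is precisely why the multi-diagram includes the $\delta$-curves for the $\infty$ resolution even though $X$ itself lives over $\{0,1\}^n$.
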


The complex $X$ has a cubical filtration $\FF$ in the sense of Definition \ref{defn:cubical-chain-complex}, where the summand $\uCFK(\bm\alpha, \bm\beta(v))$ is defined to lie in filtration level $\abs{v}$. The differential on the associated graded complex of this filtration consists of the internal differentials on the summands $\uCFK(\bm\alpha, \bm\beta(v))$, so the $E_1$ page of the induced spectral sequence is:
\begin{equation} \label{eq:E1(X,D,F)}
E_1(X,D, \FF) \cong \bigoplus_{v \in \{0,1\}^n} \uHFK(L_v,\p).
\end{equation}
By Proposition \ref{prop: HFK-unlink}, each summand $\uHFK(L_v,\p)$ can be identified with $\robar\Lambda_{L_v,\p} \otimes \Gamma_{L_v}$. (Here $\robar\Lambda_{L_v,\p}$ and $\Gamma_{L_v}$ denote the appropriate exterior algebras over $\Z_2$.) The $d_1$ differential is the sum, taken over all $v \lessdot v' \in \{0,1\}^n$, of the maps
\[
(f_{v,v'})_* \co  \uHFK(\bm\alpha, \bm\beta(v)) \to \uHFK(\bm\alpha, \bm\beta(v')),
\]
Note that each $(f_{v,v'})_*$ splits as $(f_{v,v'}^0)_* + (f_{v,v'}^1)_*$, and these terms are each fully determined by Propositions \ref{prop: HFK-edgemaps0} and \ref{prop: HFK-edgemaps1}.
%
Thus, the homology group $E_2(X,D,\FF)$ can be computed explicitly. However, as noted in \cite[Remark 7.7]{BaldwinLevineSpanning}, $E_2(X,D,\FF)$ fails to be a knot invariant, and it is not isomorphic to a suitable multiple of Khovanov homology, as would be required to prove a rank inequality in the manner of \cite{OSzDouble}.

\subsection{The Alexander filtration}

In this section, we shall use the Alexander gradings on the complexes $\uCFK(\bm\alpha, \bm\beta(v))$ to produce a new filtration on $X$. We begin by determining the Alexander grading shifts of the maps $f_{v^0, \dots, v^k}^{\bm\epsilon}$, where $\bm\epsilon \in \{0,1\}^k$.

\begin{lemma} \label{lemma: alex-shift}
For any successor sequence $v^0 \lessdot \cdots \lessdot v^k$, and any $\bm\epsilon \in \{0,1\}^k$, the map $f_{v^0, \dots, v^k}^{\bm\epsilon}$ is homogeneous with respect to the Alexander grading of degree
\begin{equation}
A(f_{v^0, \dots, v^k}^{\bm\epsilon}) = \frac{l_{v^k} - l_{v^0} - k}{2} + \abs{\bm\epsilon}.
\end{equation}
\end{lemma}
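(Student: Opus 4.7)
The plan is to prove the formula by induction on $k$, using the explicit edge-map formulas from Section~\ref{subsec: HFK-cobordism} for the base case $k=1$, and $A_\infty$-type additivity of grading shifts for the inductive step.

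For $k=1$, Propositions~\ref{prop: HFK-edgemaps0} and~\ref{prop: HFK-edgemaps1} give explicit descriptions of the edge maps on the top generator. If $v^0 \lessdot v^1$ is a split (so $l_{v^1} = l_{v^0} + 1$), then $f^0(\x_{\max}) = \pm 1 \otimes (\zeta_0 - \zeta_1)$ lies in Alexander grading $0$ (since $A^\zeta$-actions preserve $A$), giving $A(f^0) = 0$; and the relation $[f^1, \Psi^i_j] = f^0$ for the indicated $(i,j)$, together with $A$-shift $-1$ for $\Psi^i_j$, forces $A(f^1) = 1$. If $v^0 \lessdot v^1$ is a merge ($l_{v^1} = l_{v^0} - 1$), then $g^0(1 \otimes 1) = \alpha'_0 \otimes 1$ lies in Alexander grading $-1$ (each $y_p$-action lowers $A$ by $1$), giving $A(g^0) = -1$; and $g^1(\y_{\max}) = \x_{\max}$ yields $A(g^1) = 0$. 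In all four cases the shift equals $\tfrac{1}{2}(l_{v^1} - l_{v^0} - 1) + \epsilon_1$.

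For the inductive step with $k \geq 2$, the $A_\infty$-type relation for the hat polygon maps (obtained from the ends of the one-dimensional moduli spaces of index-$(2{-}k)$ polygons, using that each $\Theta^{\epsilon_j}_{v^{j-1},v^j}$ is a cycle since it is top-graded within its Alexander grading) reads
\[
\partial \circ f^{\bm\epsilon}_{v^0,\dots,v^k} + f^{\bm\epsilon}_{v^0,\dots,v^k} \circ \partial = \sum_{i=1}^{k-1} f^{(\epsilon_{i+1},\dots,\epsilon_k)}_{v^i,\dots,v^k} \circ f^{(\epsilon_1,\dots,\epsilon_i)}_{v^0,\dots,v^i}.
\]
Each polygon-count map $f^{\bm\epsilon}$ is Alexander-homogeneous (a standard consequence of the grading formula, where the constraint $n_\w(\psi) = n_\z(\psi) = 0$ eliminates the dependence on the specific polygon class), so matching Alexander shifts across the relation forces the additivity
\[
A(f^{\bm\epsilon}_{v^0,\dots,v^k}) = A(f^{(\epsilon_1,\dots,\epsilon_i)}_{v^0,\dots,v^i}) + A(f^{(\epsilon_{i+1},\dots,\epsilon_k)}_{v^i,\dots,v^k})
\]
for each $i$. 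Applying the inductive hypothesis to both factors telescopes the right side to $\tfrac{1}{2}(l_{v^k} - l_{v^0} - k) + |\bm\epsilon|$ (independent of $i$), completing the induction.

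The main subtlety is that the additivity argument becomes vacuous if all compositions on the right-hand side of the $A_\infty$ relation simultaneously vanish; ruling this out requires confirming that at least one composition is nonzero on each chain. A cleaner alternative that avoids this entirely is to invoke Theorem~\ref{thm: BLH}: since $D$ is homogeneous of $\Delta$-degree $1$ and the shifted delta grading differs between source and target summands by $(|v^k| - |v^0|)/2 = k/2$, one obtains $A(f^{\bm\epsilon}) = M(f^{\bm\epsilon}) + 1 - k/2$ for every term of $D$. The Maslov shift can then be read off directly from the polygon Maslov-index formula (with $\mu(\psi) = 1-k$ and $n_\w(\psi) = n_\z(\psi) = 0$), once one records the Maslov gradings of the $\Theta^{\epsilon_j}_{v^{j-1},v^j}$ generators in the auxiliary unlink Heegaard diagrams $(\bm\beta(v^{j-1}), \bm\beta(v^j))$.
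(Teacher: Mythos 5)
Your base case is correct and aligns with the paper's use of Propositions~\ref{prop: HFK-edgemaps0} and~\ref{prop: HFK-edgemaps1} (though the paper handles the $\bm\epsilon$-dependence differently: it first invokes \cite[Proposition 6.6]{BaldwinLevineSpanning} to establish $A(f^{\bm\epsilon}) = A(f^{\bm 0}) + \abs{\bm\epsilon}$, which reduces everything to the $\bm\epsilon = \bm 0$ case, so only the split/merge shifts of $f^0$ and $g^0$ are needed). The inductive step, however, has a genuine gap. The $A_\infty$ relation you write is incomplete: the degeneration of the one-dimensional moduli space of index-$(2-k)$ polygons also produces ends in which a sub-polygon whose edges lie entirely among the $\beta(v^i)$ breaks off, feeding the resulting generator in $\T_{\beta(v^i)} \cap \T_{\beta(v^j)}$ back into a smaller $\alpha$-polygon. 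These ``$\Theta$-composition'' terms $F_{\alpha, \beta(v^0), \dots, \beta(v^i), \beta(v^j), \dots, \beta(v^k)}(\,\cdots \otimes F_{\beta(v^i), \dots, \beta(v^j)}(\Theta \otimes \cdots \otimes \Theta) \otimes \cdots\,)$ are precisely what make the cube-of-resolutions associativity argument delicate in \cite{BaldwinLevineSpanning}, and they do not vanish in general; your relation omits them. You also flag the vacuity concern yourself, and I agree it is a real obstacle: there is no a priori guarantee that any term on the right of your relation is nonzero in a given Alexander grading.

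The paper avoids this entirely. The additivity $A(f^0_{v^0, \dots, v^j}) + A(f^0_{v^j, \dots, v^k}) = A(f^0_{v^0, \dots, v^k})$ is established by an argument modeled on \cite[Section 6.1]{BaldwinLevineSpanning}, which proceeds at the level of homotopy \emph{classes}: given any classes $\psi_1 \in \pi_2(\x, \Theta, \dots, \z)$ and $\psi_2 \in \pi_2(\z, \dots, \Theta, \y)$ with $n_\w = n_\z = 0$, the juxtaposed class $\psi_1 * \psi_2$ has the same basepoint multiplicities, and the index-theoretic grading formula (together with the Alexander homogeneity from \cite[Proposition 6.6]{BaldwinLevineSpanning}) forces the shifts to add. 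This is purely formal; it does not require the maps $f^{\bm\epsilon}$ to be nonzero, nor does it invoke the $A_\infty$ relation. Your ``cleaner alternative'' via Theorem~\ref{thm: BLH} is plausible in outline, but it is only a sketch: the $\Delta$-homogeneity of $D$ is stated for the sums $f_{v^0,\dots,v^k}$, so you would first need to argue each individual $f^{\bm\epsilon}$ shifts $\Delta$ by $1$, and you do not actually carry out the Maslov-shift computation, which requires recording the Maslov gradings of the $\Theta^{\epsilon_j}$ relative to a normalization across the various $\uCFK(\bm\beta(v^{j-1}), \bm\beta(v^j))$ complexes — essentially the same calculation the paper offloads to \cite{BaldwinLevineSpanning}.
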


\begin{proof}
By \cite[Proposition 6.6]{BaldwinLevineSpanning}, each map $f_{v^0, \dots, v^k}^{\bm\epsilon}$ is homogeneous, and
\[
A(f_{v^0, \dots, v^k}^{\bm\epsilon}) = A(f_{v^0, \dots, v^k}^0) + \abs{\bm\epsilon}.
\]
Thus, it suffices to compute $A(f_{v^0, \cdots, v^k}^0)$. An argument similar to that of \cite[Section 6.1]{BaldwinLevineSpanning} shows that the quantity
\[
A(f_{v^0, \dots, v^j}^0) + A(f_{v^j, \cdots v^k}^0)
\]
is independent of the choice of $j = 0, \dots, k$. Since $f_{v^0}^0 = f_{v^0}$ is simply the differential on $\uCFK(\bm\alpha, \bm\beta(v^0))$, which preserves the Alexander grading, we deduce that
\[
A(f_{v^0, \dots, v^j}^0) + A(f_{v^j, \dots, v^k}^0) = A(f_{v^0, \dots, v^k}^0).
\]
In the case $k=1$, the computation of $f_{v^0,v^1}^0$ in Proposition \ref{prop: HFK-edgemaps0} shows that $A(f_{v^0,v^1}^0)$ is $-1$ in the case of a merge and $0$ in the case of a split. Thus, in general, we have
\[
A(f_{v^0, \dots, v^k}^0) = -p,
\]
where $p$ is the number of splits in the successor sequence. Since
\[
l_{v^k} - l_{v^0} = k-2p,
\]
the result follows.
\end{proof}

Thus, if we define a new grading $\GG$ on $X$ by defining, for any homogeneous element $x \in \uCFK(\bm\alpha, \bm\beta(v))$,
\begin{equation} \label{eq: G-filt}
\GG(x) = A(x) + \frac{\abs{v} - l_v}{2},
\end{equation}
we see that $f_{v^0, \dots, v^k}$ shifts $\GG$ by
\begin{equation}
\GG(f_{v^0, \dots, v^k}^{\bm\epsilon}) = \abs{\bm\epsilon},
\end{equation}
so $\GG$ actually yields a filtration on $(X,D)$. The associated graded complex of this filtration is $X$, with differential
\[
D^0 = \sum_{v^0 \lessdot \cdots \lessdot v^k} f_{v^0, \dots, v^k}^0.
\]
The spectral sequence implies that
\begin{equation}
\rank H_\delta(X, D^0) \ge \rank \HFKtil{}^\delta(-\hat\LL).
\end{equation}

We now state a graded version of Conjecture \ref{conj: H(X,D0)-intro}:
\begin{conjecture} \label{conj: H(X,D0)}
The complex $(X,D^0)$ is quasi-isomorphic to $\KhCx(L,\p)$ via a map that identifies the $\Delta$ grading on $(X,D^0)$ with the delta grading on $\KhCx(L,\p)$ and respects the cubical filtrations on the two complexes. As a result,
\begin{equation} \label{eq: H(X,D0)}
H_\delta(X, D^0) \cong \Kh^\delta(L,\p;\Z_2),
\end{equation}
and therefore
\begin{equation} \label{eq: Kh-HFK-pointed-graded}
\rank \Kh^\delta(L,\p;\Z_2) \ge \rank \HFKtil{}^\delta(-\hat\LL;\Z_2).
\end{equation}
\end{conjecture}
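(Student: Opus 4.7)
The plan is to construct an explicit filtered chain map $\Phi \colon (X, D^0) \to \KhCx(L,\p;\Z_2)$ whose reduction to the $E_1$ pages is the isomorphism established in Theorem \ref{thm: E1iso-intro}. Once such a $\Phi$ exists, the standard comparison theorem for spectral sequences of bounded filtrations (which both cubical filtrations are, since the cube is finite) shows that $\Phi$ induces isomorphisms on every subsequent page and therefore on the abutment. Combined with Theorem \ref{thm: BLH} and the $\GG$-filtration spectral sequence \eqref{eq: G-spectral}, this would yield both \eqref{eq: H(X,D0)} and the desired inequality \eqref{eq: Kh-HFK-pointed-graded}. The grading statement on $\Delta$ versus $\deltagr$ is automatic once the isomorphism is produced respecting the identifications from Lemma \ref{lemma: alex-shift} and the shift in \eqref{eq: G-filt}.

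The first step is to lift the vertex-by-vertex homology identifications to the chain level. At each $v \in \{0,1\}^n$, Proposition \ref{prop: HFK-unlink} identifies $\uHFK(\hat\LL_v;\Z_2)$ with $\Lambda_{\p,L_v}\otimes \Gamma_{L_v}$ as a $\Lambda_\p\otimes\Gamma_L$-module, and Proposition \ref{prop: Kh-unlink} does the same for $\Kh(L_v,\p;\Z_2)$. Since the Heegaard diagram for the unlink described in Figure \ref{fig: unlink} has vanishing differential, $\uCFK(\bm\alpha,\bm\beta(v))$ is already equal to its homology; the Khovanov side is formal by inspection over $\Z_2$. Choose once and for all chain-level isomorphisms $\psi_v \colon \uCFK(\bm\alpha,\bm\beta(v)) \to \KhCx(L_v,\p;\Z_2)$ intertwining the $\Psi^p$ action with $\xi_p$ and the $A^{\zeta_i}$ action with $\zeta_i$. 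By Propositions \ref{prop: Kh-edgemaps} and \ref{prop: HFK-edgemaps0}, the $\psi_v$ commute on the nose with the length-one edge maps, which is exactly the content of Theorem \ref{thm: E1iso-intro}.

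The second step is to assemble the $\psi_v$ into a filtered chain map on all of $X$. The naive candidate $\bigoplus_v \psi_v$ will not be a chain map because $D^0$ contains higher-length terms $f^0_{v^0,\dots,v^k}$ for $k\geq 2$ coming from holomorphic $(k{+}2)$-gon counts, while the Khovanov differential has only edge contributions. The natural remedy is to augment $\Phi$ with higher components $\Phi_{v^0,\dots,v^k}\colon \uCFK(\bm\alpha,\bm\beta(v^0))\to \KhCx(L_{v^k},\p;\Z_2)$, shifting cubical filtration by $k$, and solve inductively the twisted-complex relation
\[
\sum_{j=0}^{k}\Phi_{v^j,\dots,v^k}\circ f^0_{v^0,\dots,v^j}\;=\;\sum_{j=0}^{k}\Khdiff^{\,v^j,\dots,v^k}\circ \Phi_{v^0,\dots,v^j},
\]
where $\Khdiff^{\,v^j,\dots,v^k}$ vanishes for $k-j\geq 2$. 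The obstruction to solving at step $k$ lives in $\Hom^{\,\mathrm{deg}\,k-1}(E_1(X),E_1(\KhCx))$ computed with respect to the $d_1$ differential identified in Proposition \ref{prop: Kh-edgemaps}; acyclicity of the relevant pieces would finish the argument.

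The main obstacle is controlling the higher polygon counts $f^0_{v^0,\dots,v^k}$ for $k\geq 2$. Even identifying them up to homotopy requires a delicate analysis of moduli spaces of holomorphic polygons in the Heegaard multi-diagram, going beyond the triangle counts handled in Proposition \ref{prop: HFK-edgemaps0}. A possible route around direct computation is to exploit the rich module structure: the higher edge maps must be homogeneous for the $\Lambda_{\p,L_v}\otimes \Gamma_{L_v}$-actions and must satisfy the $A_\infty$-relation coming from Remark \ref{rmk: A-infty}, which should strongly constrain their form. If one can prove that, within the correct bigrading and filtration, these constraints pin down each $f^0_{v^0,\dots,v^k}$ uniquely up to an exact term, then matching against the Khovanov side reduces to combinatorial bookkeeping already carried out in Section \ref{sec: Kh-unlink} and Section \ref{subsec: Kh-cobordism}. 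Developing the requisite $A_\infty$-module theory over $\Lambda_\p$ — essentially the Koszul-dual statement to the existence of $\gCFKm$ hinted at in Remark \ref{rmk: A-infty} — is where the genuine new work would be required.
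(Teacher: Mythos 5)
What you are addressing is stated in the paper as Conjecture \ref{conj: H(X,D0)}, and the paper explicitly does \emph{not} prove it. Your write-up reproduces the strategy the authors themselves describe just after Theorem \ref{thm: E1iso}: construct a filtered chain map $\Phi\co(X,D^0)\to\KhCx(L,\p;\Z_2)$ inducing the $E_1$-page isomorphism, then invoke the comparison theorem for bounded filtrations. You also correctly locate the central obstruction --- one would need to compute, or rigidly constrain, the higher face maps $f^0_{v^0,\dots,v^k}$ for $k\geq 2$ at the chain level --- which is exactly the step the authors leave open, remarking that it requires counting holomorphic polygons for every face of the cube. So you have not proved the statement; you have faithfully restated the open problem, and you acknowledge as much in your final sentence.

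Two technical points in your sketch also deserve scrutiny. First, you assert that each vertex complex $\uCFK(\bm\alpha,\bm\beta(v))$ has vanishing differential, citing the diagram of Figure \ref{fig: unlink}. But the cube $X$ in Section \ref{sec: cube} is built from the planar multi-diagram of Figure \ref{fig: crossing-heegaard}, a different Heegaard diagram for the same links; there is no reason its vertex complexes should be formal, and indeed the $E_1$ page \eqref{eq: E1(X,D0,F)} is obtained by passing to homology at each vertex precisely because the internal differentials can be nontrivial. Replacing the vertex complexes by smaller formal models compatibly across the entire multi-diagram is itself a nontrivial filtration problem that your sketch does not address. Relatedly, Propositions \ref{prop: Kh-edgemaps} and \ref{prop: HFK-edgemaps0} identify the edge maps \emph{on homology}, not on the chain level, so your claimed chain-level commutativity of $\psi_v$ with the length-one edge maps is unjustified. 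Second, the suggestion that the $A_\infty$-module structure over $\Lambda_\p$ and the $\Gamma_\p$-action should pin down the higher polygon maps up to boundary terms is plausible but undemonstrated; nothing in the paper (Remark \ref{rmk: A-infty} included) asserts such a rigidity, and producing it would be precisely the genuine new work.
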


The \emph{width} of any delta-graded group $H^*$ is
\[
w(H^*) = \max\{\delta \mid H^\delta \ne 0\} - \min\{\delta \mid H^\delta \ne 0\} + 1.
\]
Conjecture \ref{conj: H(X,D0)} would imply that
\[
w(\Kh(L,\p;\Z_2)) \ge w(\HFKtil(-\hat\LL;\Z_2)),
\]
and hence, using Proposition \ref{prop: Kh-reduced2} and Lemma \ref{lemma: HFK-unred}, that
\[
w(\rKh(L;\Z_2)) \ge w(\HFK(-L;\Z_2)).
\]
It is tempting to conjecture that a graded version of \eqref{eq: Kh-HFK} holds without reference to the marked points $\p$, namely that
\begin{equation} \label{eq: Kh-HFK-graded}
2^{l-1} \rank \rKh{}^\delta(L;\Z_2) \ge \rank \HFK{}^\delta(-L;\Z_2),
\end{equation}
but this does not follow directly from \eqref{eq: Kh-HFK-pointed-graded}.

The strongest evidence for Conjecture \ref{conj: H(X,D0)} comes from considering the cube filtration $\FF$ as a filtration on the complex $(X,D^0)$. The associated graded differential in the induced spectral sequence is once again given by the internal differentials on the summands $\CFK(\bm\alpha, \bm\eta(v))$, so the $E_1$ pages is again
\begin{equation} \label{eq: E1(X,D0,F)}
E_1(X,D^0, \FF) \cong \bigoplus_{v \in \{0,1\}^n} \HFKtil(-\hat\LL_v).
\end{equation}
Here, however, the $d_1$ differential is the sum of the maps
\[
(f_{v,v'}^0)_* \co  \HFKtil(-\hat\LL_v) \to \HFKtil(-\hat\LL_v').
\]
Observe that each of these maps commutes with the action of $\Lambda_\p$ on each summand of $E_1(X,D^0, \FF)$. Thus, $E_1(X,D^0,\FF)$ can be viewed as a differential $\Lambda_\p$--module. (We shall discuss this module structure in more depth below.)

With all these pieces in place, we may now state and prove a strengthened version of Theorem \ref{thm: E1iso-intro}.

\begin{theorem} \label{thm: E1iso}
The $E_1$ pages of the cubical spectral sequences associated to $(X,D^0)$ and $(\KhCx(L,\p;\Z_2), \diff)$ are isomorphic as graded chain complexes of $\Lambda_\p$--modules.
\end{theorem}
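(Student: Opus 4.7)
The plan is to assemble a vertex-by-vertex identification using Proposition~\ref{prop: HFK-unlink}, which gives for each $v\in\{0,1\}^n$ a canonical module isomorphism
\[
\HFKtil(-\hat\LL_v;\Z_2)\cong\Kh(L_v,\p;\Z_2)
\]
(mirroring is topologically trivial for unlinks, and the ``up to overall sign'' ambiguity disappears over $\Z_2$). Summing across the cube defines a $\Z_2$-linear bijection
\[
\Phi\co E_1(X,D^0,\FF)\too E_1(\KhCx(L,\p;\Z_2),\Khdiff,\FF),
\]
and both the $\Lambda_\p$-action and the $\Gamma_{L_v}$-action at each vertex are matched by Proposition~\ref{prop: HFK-unlink}; in particular $\Phi$ is $\Lambda_\p$-linear.

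Next, we identify the two $d_1$ differentials edge by edge. For each edge $u\iscovered v$, the Khovanov $d_1$ component is the split map $\tilde\Delta_\ast$ or merge map $\tilde\mu_\ast$ by Proposition~\ref{prop: Kh-edgemaps}; the HFK $d_1$ component is $(f^0_{u,v})_\ast$, and Proposition~\ref{prop: HFK-edgemaps0} identifies this (up to sign, i.e.\ on the nose over $\Z_2$) with the same $\tilde\Delta_\ast$ or $\tilde\mu_\ast$. Hence $\Phi$ intertwines $d_1$. The $\Lambda_\p$-linearity of $d_1$ is automatic on the Khovanov side by Lemma~\ref{lem:split-merge-module}, and follows on the HFK side from the commutation $[\Psi^p,f^0_{u,v}]\sim 0$ noted in the proof of Proposition~\ref{prop: HFK-edgemaps0}. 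Therefore $\Phi$ is an isomorphism of chain complexes of $\Lambda_\p$-modules.

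It remains to check that $\Phi$ preserves the grading. The $\Delta$-grading on $X$ built into \eqref{eq: G-filt} combines with the cubical shift by $(\abs{v}-n_+(L))/2$ to produce a specific shift of each vertex delta grading; the Khovanov cubical filtration introduces its own vertex-dependent shift via the standard quantum-grading conventions. Both shifts are explicit functions of $\abs{v}$, $l_v$, and $n_\pm(L)$. The main obstacle is verifying that these two functions coincide at a single vertex, say $v=0$; once this base case is settled, the fact that every edge map has $\Delta$-degree $1$ on both sides propagates the agreement across the entire cube. This last step is a routine numerical check requiring no further Floer-theoretic input.
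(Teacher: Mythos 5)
Your plan matches the paper's proof in all essentials: both assemble the vertex-by-vertex identifications from Propositions~\ref{prop: Kh-unlink} and~\ref{prop: HFK-unlink}, then match the $d_1$ components edge by edge via Propositions~\ref{prop: Kh-edgemaps} and~\ref{prop: HFK-edgemaps0}, with $\Lambda_\p$-linearity handled exactly as you say. The only place you diverge is the grading check, and there your propagation argument is under-specified: knowing that the edge maps have $\Delta$-degree $1$ on both sides is not by itself enough to propagate equality of the two grading assignments, because what you actually need is that the \emph{internal} $\delta$-shift of the distinguished generator $1\otimes 1$ under $\tilde\Delta_*$ (resp.\ $\tilde\mu_*$) agrees on the two sides. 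Concretely, $\tilde\Delta_*(1\otimes 1)=1\otimes(\gamma'_0-\gamma'_1)$ raises the internal $\delta$ by $1$, while $\tilde\mu_*(1\otimes 1)=\alpha'_0\otimes 1$ preserves it; combined with the degree-$1$ shift of the edge map this forces the grading of $1\otimes 1$ to be preserved along splits and raised by $1$ along merges, and one must observe that $\Psi^p$ and $A^\zeta$ give exactly the same $\delta$-shifts on the Floer side. Since this bookkeeping is essentially equivalent to writing out both grading formulas explicitly, the paper skips the propagation and just computes directly: on the Khovanov side $\deltagr(\alpha_1\cdots\alpha_{l_v})=\tfrac{-l_v+\abs{v}-n_+(L)}{2}$, and on the Floer side $\Delta(x_{\max})=A-M+\tfrac{\abs{v}-n_+(L)}{2}=\tfrac{-l_v+\abs{v}-n_+(L)}{2}$, using $A(x_{\max})=0$ and $M(x_{\max})=\tfrac{l_v}{2}$ from Proposition~\ref{prop: HFK-unlink}. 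That direct computation is cleaner, and note that it is not entirely ``free of Floer-theoretic input'' --- the absolute Maslov and Alexander normalizations on the unlink summands are precisely what Proposition~\ref{prop: HFK-unlink} supplies.
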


\begin{proof}
This follows directly from Propositions \ref{prop: Kh-unlink}, \ref{prop: Kh-edgemaps}, \ref{prop: HFK-unlink}, and \ref{prop: HFK-edgemaps0}. Specifically, both $E_1$ pages are isomorphic to
\[
\bigoplus_{v \in \{0,1\}^n} \robar\Lambda_{L_v,\p} \otimes \Gamma_{L_v},
\]
and the $d_1$ differentials are the maps described in Sections \ref{subsec: Kh-cobordism} and \ref{subsec: HFK-cobordism}.

To see that the gradings on the two $E_1$ pages agree, it suffices to determine the grading of
\[
1 \otimes 1 \in \robar\Lambda_{L_v,\p} \otimes \Gamma_{L_v}
\]
for each $v$. On the Khovanov side, this element is represented by $\alpha_1 \cdots \alpha_{l_v} \in \KhCx(L_v)$, for which we have:
\begin{align*}
\homgr(\alpha_1 \cdots \alpha_{l_v}) &= l_v + \abs{v}  - n_-(L)  \\
\intgr(\alpha_1 \cdots \alpha_{l_v}) &= 3l_v + \abs{v} + n_+(L) - 2n_-(L) \\
\intertext {and hence}
\deltagr(\alpha_1 \cdots \alpha_{l_v}) &= \frac{-l_v + \abs{v} - n_+(L)}{2}.
\end{align*}
On the knot Floer side, $1 \otimes 1$ is given by an element $x \in \HFKtil(-\hat\LL_v)$ whose internal Alexander and Maslov gradings are maximal, namely $A(x) = 0$ and $M(x) = \frac{l_v}{2}$, and hence
\[
\Delta(x) = A(x) - M(x) + \frac{\abs{v} - n_+(\DD)}{2} = \frac{-l_v + \abs{v} - n_+(L)}{2},
\]
as required.
\end{proof}

As noted in the Introduction, one way to prove Conjecture \ref{conj: H(X,D0)} would be to construct a filtered chain map
\[
\Phi\co (X,D^0) \to \KhCx(\LL)
\]
that induces the isomorphism of Theorem \ref{thm: E1iso} on the $E_1$ page of the spectral sequences. To write down such a map explicitly, one would presumably need a complete description of $(X,D^0)$, which would require determining the counts of holomorphic polygons corresponding to every face in the cube of resolutions.

\subsection{Additional algebraic structure}

As further evidence for Conjecture \ref{conj: H(X,D0)}, we now show that the homology $H_*(X,D^0)$ has commuting actions of the exterior algebras $\Lambda_\p$ and $\Gamma_\p$, akin to the structures on $\Kh(L,\p;\Z_2)$ described in Section \ref{subsec: additional}.

First, we consider the action of $\Lambda_\p$. Just as in Remark \ref{rmk: A-infty}, this arises from an $A_\infty$--module structure on the chain level. For any linear combination $\a \in (\Z_{\ge 0})^\p$, and any distinct sets of attaching curves $\bm\eta_0, \dots, \bm\eta_k$ (either $\bm\alpha$ or $\bm\beta(v)$ for $v \in \{0,1\}^n$), define a map
\[
\Psi^\a_{\eta_0, \dots, \eta_k} \co \uCFK(\bm\eta_0, \bm\eta_1) \otimes \cdots \otimes \uCFK(\bm\eta_{k-1}, \bm\eta_k) \to \uCFK(\bm\eta_0, \bm\eta_k)
\]
that counts holomorphic polygons whose multiplicities at the $z$ basepoints are specified by $\a$, and whose multiplicities at the $w$ basepoints are all zero. That is, for $\x_i \in \T_{\eta_{i-1}} \cap \T_{\eta_i}$, define
\begin{equation} \label{eq: Psi-polygons}
\Psi^\a_{\eta_0, \dots, \eta_k} (\x_1 \otimes \cdots \otimes \x_k) = \\
\sum_{\y \in \T_{\eta_0} \cap \T_{\eta_k}} \sum_{\substack{ \psi \in \pi_2(\x_1, \cdots, \x_k, \y) \\
\mu(\psi) = 2-k \\ \mults_\z(\psi) = \a  \\ \mults_\w(\psi) = 0 }} \#(\MM(\psi)) \y.
\end{equation}
The case $k=1$ corresponds to the maps from Remark \ref{rmk: A-infty}. A standard degeneration argument shows that these maps satisfy an $A_\infty$ relation: for any $\x_i \in \T_{\eta_{i-1}} \cap \T_{\eta_i}$, we have:
\begin{multline} \label{eq: Psi-polygons-relation}
\sum_{1 \le i \le j \le k} \sum_{\a = \b + \c} \Psi^\b_{\eta_0, \dots, \eta_i, \eta_j, \dots, \eta_k} \big( \x_1 \otimes \dots \otimes \x_{i-1} \otimes 
\Psi^\c_{\eta_{i-1}, \dots, \eta_j} \left( \x_i \otimes \cdots \otimes \x_j \right) \\ \otimes \x_{j+1} \otimes \cdots \otimes \x_k \big) = 0.
\end{multline}

\begin{lemma} \label{lemma: Psi(allTheta0)}
For any $k\ge 1$, any successor sequence $v^0 \lessdot \cdots \lessdot v^k$ in $\{0,1\}^n$, and any nonzero vector $\a \in \Z_{\ge 0}^m$, we have
\begin{equation} \label{eq: Psi(allTheta0)}
\Psi^\a_{\beta(v^0), \dots, \beta(v^k)}(\Theta^0_{v^0,v^1} \otimes \dots \otimes \Theta^0_{v^{k-1},v^k}) = 0.
\end{equation}
\end{lemma}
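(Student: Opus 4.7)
The plan is to prove vanishing via an Alexander-grading obstruction: any contribution to $\Psi^\a_{\beta(v^0),\dots,\beta(v^k)}(\Theta^0\otimes\dots\otimes\Theta^0)$ with $\a \neq 0$ will land in an Alexander grading strictly below the minimum Alexander grading supported by the target complex $\uCFK(\bm\beta(v^0),\bm\beta(v^k))$, and hence must be zero. The core observation is that, relative to $\Psi^0$, the map $\Psi^\a$ drops the Alexander grading by exactly $\abs{\a}$ (its defining polygons have $z$-multiplicity $\a$ and $w$-multiplicity $0$), while the output of $\Psi^0$ on the $\Theta^0$-tuple already sits at the minimum Alexander grading of the target.

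Concretely, I would proceed in three steps. First, I would generalize Lemma \ref{lemma: alex-shift} to the polygon map $\Psi^\a$ over the multi-diagram $(\bm\beta(v^0),\dots,\bm\beta(v^k))$: the same argument that identifies the Alexander shift of $f_{v^0,\dots,v^k}^0$ shows that the shift for $\Psi^\a$ is that of $\Psi^0$ decremented by $\abs{\a}$. Second, I would identify the element $\Psi^0(\Theta^0_{v^0,v^1},\dots,\Theta^0_{v^{k-1},v^k})$ with a ``compound'' distinguished generator $\Theta^0_{v^0,v^k}\in\uCFK(\bm\beta(v^0),\bm\beta(v^k))$, extending the $k=1$ definition to the case where $v^0$ and $v^k$ differ at $k$ crossings. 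Third, I would verify that $\Theta^0_{v^0,v^k}$ achieves the minimum Alexander grading of $\uCFK(\bm\beta(v^0),\bm\beta(v^k))$ by a local analysis: at each crossing where $v^0$ and $v^k$ differ, the two-generator local factor has $\Theta^0$ at the minimum Alexander; at each unchanging crossing, a standard ``square'' local factor contributes a canonical minimum-Alexander intersection point. Granting these three steps, the output of $\Psi^\a$ for $\a \neq 0$ is forced into an Alexander grading with no generators, and thus vanishes.

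The main obstacle will be Step 3, namely establishing that $\Theta^0_{v^0,v^k}$ really is globally minimal in Alexander grading, rather than merely locally minimal in each factor. The Heegaard diagram of Section \ref{subsec: cube-construction} is planar but has many intersections at each crossing, including contributions from the $\beta_{c_j}^0$ ``boxes'' at non-changing crossings, and tracking how these local Alexander contributions assemble requires careful bookkeeping. Should this grading bound fail in full generality, a fallback strategy is a direct positivity-of-domains argument: for any polygon $\psi$ counted by $\Psi^\a(\Theta^0\otimes\dots\otimes\Theta^0)$, examine the domain inside each $\alpha_p$-strip containing the paired basepoints $w_p$ and $z_p$, and use the $\Theta^0$ corner conditions at every level of the multi-diagram to show that the local multiplicity at $z_p$ is bounded above by the multiplicity at $w_p$, so $\mults_\w(\psi)=0$ forces $\mults_\z(\psi)=0$. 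Either way, the heart of the proof is a structural statement about how $\Theta^0$ ``avoids'' the $z$-basepoints in the planar Heegaard diagram.
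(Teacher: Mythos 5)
Your Alexander-grading obstruction is exactly the mechanism the paper uses, so structurally you have the right idea, and the $k=1$ case is disposed of in precisely the way you describe. Where the paper differs from your Steps 2 and 3 (for $k>1$) is that it does not compute the holomorphic count $\Psi^0(\Theta^0_{v^0,v^1}\otimes\cdots\otimes\Theta^0_{v^{k-1},v^k})$, which is delicate and could in principle vanish as a signed count. Instead it only exhibits a single \emph{homotopy class} $\psi\in\pi_2(\Theta^0_{v^0,v^1},\dots,\Theta^0_{v^{k-1},v^k},\Theta^{\bm0}_{v^0,v^k})$ avoiding all the basepoints, read off from the local Heegaard picture following \cite[Lemma 5.8, Proposition 5.9]{BaldwinLevineSpanning}, and uses it purely as a reference: for any class $\psi'$ contributing to $\Psi^\a$ with output corner $\Theta'$, the difference $D(\psi')-D(\psi)$ plus a periodic domain yields a Whitney class $\phi\in\pi_2(\Theta^{\bm0}_{v^0,v^k},\Theta')$, and the grading formula gives $A(\Theta^{\bm0}_{v^0,v^k})-A(\Theta')=n_\z(\psi')-n_\w(\psi')=\abs{\a}$. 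The minimality the paper then needs is only that of $A(\Theta^{\bm0}_{v^0,v^k})$ among the $2^k$ preferred generators in the minimal $\delta$-grading summand, which falls out immediately from the local bigons containing the $z$-basepoints at the $k$ changing crossings; there is no need to chase the global Alexander minimality across the unchanging-crossing factors that you correctly flag as the bookkeeping burden in your Step 3. Your fallback (a direct positivity-of-domains analysis in the $\alpha_p$-strips) is a genuinely different tactic not pursued in the paper, though it looks plausible given the rigid local structure of the planar diagram.
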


\begin{proof}
In the case where $k=1$, this follows directly from the fact that $\Theta^0_{v^0,v^1}$ is in the minimal Alexander grading and that $\Psi^\a_{\beta(v^0),\beta(v^1)}$ decreases Alexander grading by $\abs{\a}$. Hence, we may assume that $k>1$.

Just as in \cite[Section 5.1]{BaldwinLevineSpanning}, the summand of $\CFKtil(\bm\beta(v^0), \bm\beta(v^k))$ in minimal $\delta$ grading is generated by $2^k$ preferred generators. We may index these by $\Theta^{\bm\epsilon}_{v^0,v^k}$ for $\bm\epsilon = (\epsilon_1, \dots, \epsilon_k) \in \{0,1\}^k$, such that for each $i=1, \dots, k$, $\Theta^{\bm\epsilon}_{v^0,v^k}$ is ``close'' to $\Theta^{\epsilon_i}_{v^{i-1},v^i}$. By considering the bigons containing the $z$ basepoints, we see that $\Theta^{\bm0}_{v^0,v^k}$ has minimal Alexander grading among these generators.

Following the proofs of Lemma 5.8 and Proposition 5.9 from \cite{BaldwinLevineSpanning}, we may find a homotopy class
\[
\psi \in \pi_2(\Theta^{0}_{v^0,v^1}, \dots, \Theta^{0}_{v^{k-1},v^k}, \Theta^{\bm0}_{v^0,v^k})
\]
that avoids all the basepoints. If $\psi' \in \pi_2(\Theta^{0}_{v^0,v^1}, \dots, \Theta^{0}_{v^{k-1},v^k}, \Theta')$ is some other class that contributes to
\[
\Psi^\a_{\beta(v^0), \dots, \beta(v^k)}(\Theta^0_{v^0,v^1} \otimes \dots \otimes \Theta^0_{v^{k-1},v^k}),
\]
then the difference of the domains of $\psi'$ and $\psi$, plus some periodic domain $P$, gives a class $\phi \in \pi_2(\Theta^{\bm0}_{v^0,v^k}, \Theta')$. Then
\[
A(\Theta^{\bm0}_{v^0,v^k}) - A(\Theta') = n_\z(\phi) - n_\w(\phi) = n_\z(\psi') - n_\w(\psi') = \abs{\bm\a},
\]
which would violate the minimality of $A(\Theta^{\bm0}_{v^0,v^k})$ unless $\bm\a=0$.
\end{proof}

For each successor sequence $v^0 \lessdot \dots \lessdot v^k$, we then define a map
\[
\Psi^\a_{v^0, \dots, v^k} \co \uCFK(\bm\alpha, \bm\eta(v^0)) \to \uCFK(\bm\alpha, \bm\eta(v^k))
\]
by
\[
\Psi^\a_{v^0, \dots, v^k}(\x) = \Psi^\a_{\alpha, \beta(v^0), \dots, \beta(v^k)} \left( \x \otimes \Theta^0_{v^0,v^1} \otimes \cdots \otimes \Theta^0_{v^{k-1},v^k} \right).
\]
We then let $\Psi^\a \co X \to X$ be the sum of all the component maps $\psi^\a_{v^0, \dots, v^k}$, taken over all successor sequences. In particular, $\Psi^0$ is simply the differential $D^0$. The relation \eqref{eq: Psi-polygons-relation} together with Lemma \ref{lemma: Psi(allTheta0)} then implies that
\[
\sum_{\a = \b + \c} \Psi^\b \circ \Psi^\c = 0.
\]
Just as in Remark \ref{rmk: A-infty}, these maps give $(X,D^0)$ the structure of an $A_\infty$ module over $\Lambda_\p$ via homological perturbation theory. More concretely, the maps $\Psi^{\e_i}$ are filtered chain maps which commute up to chain homotopy and whose squares are null-homotopic. Both the homology $H_*(X,D^0)$ and all of the pages of the spectral sequence coming from the cubical filtration on $(X,D^0)$ acquire the structure of (honest) $\Lambda_\p$--modules, where $y_p \in \Lambda_\p$ acts by $\Psi_p$.

Next, we consider the $H_1$ action. For any immersed $1$-chain $\zeta$ with endpoints in $\w \cup \z$ and in general position with respect to the Heegaard diagram, and any Whitney polygon $\psi$, there is a well-defined intersection number $\zeta(\psi) = \#(\zeta_p \cdot \partial_\alpha(\psi))$. Hence, for any successor sequence $v_0 \lessdot \dots \lessdot v_k$, we may define a map
\[
A^{\zeta}_{v^0, \dots, v^k} \co \CFKtil(\bm\alpha, \bm\beta(v^0)) \to \CFKtil(\bm\alpha, \bm\beta(v^k))
\]
by
\begin{equation} \label{eq: Azeta-polygon}
A^\zeta_{v^1, \dots, v^k} (\x) = \sum_{\y \in \T_\alpha \cap \T_{\beta(v^k)}} \sum_{ \substack{\psi \in \pi_2(\x, \Theta^0_{v^1,v^2}, \dots, \Theta^0_{v^{k-1},v^k}, \y) \\ \mu(\psi)=2-k \\ n_\w(\psi) = n_\z(\psi) = 0 }} \#\widehat{\MM}(\psi) \, \zeta(\psi) \, \x.
\end{equation}
(In particular, when $k=1$, the map $A^\zeta_{v^1}$ induces the action of $[\zeta] \in H_1(S^3, \hat L_{v^1})$ on $\HFKtil(\LL_{v^1})$.) We then define a filtered map $A^\zeta \co X \to X$ as the sum of the $A^\zeta_{v^0,\dots,v^k}$ ranging over all successor sequences. It is straightforward to show that $A^\zeta$ commutes with $D^0$ and that its square is null-homologous. (Compare Hedden--Ni \cite[Lemma 4.4]{HN-kh-unlink-detection}.) Moreover, if $\zeta_1$ and $\zeta_2$ are homologous \emph{rel endpoints}, then $A^{\zeta_1}$ and $A^{\zeta_2}$ are filtered chain homotopic, via an adaptation of the argument in \cite[Lemma 2.4]{NiActions}.

In particular, for each $p \in \p$, let $\zeta_p$ be an arc from $w_0$ to $z_p$. The maps $A^{\zeta_p}$ thus make $H_*(X,D^0)$ and all of the pages of the spectral sequence into $\Gamma_\p$--modules. By construction, the module structure on the $E_1$ page respects the direct sum decomposition \eqref{eq: E1(X,D0,F)} and agrees with the $\Lambda^* H_1(S^3,\hat L_v)$--module structure of each summand $\HFKtil(\bm\LL_v)$.

Finally, generalizing \eqref{eq: Psi-Azeta}, note that $A^{\zeta_p}$ commutes up to filtered chain homotopy with $\Psi^{p'}$ (for any $p, p' \in \p$); the chain homotopy counts Whitney polygons $\psi$ with $n_z(\psi_{p})=1$, weighted by $\zeta_{p'}(\psi)$. Thus, the actions of $\Lambda_\p$ and $\Gamma_\p$ on $H_*(X,D^0)$ commute, as do the actions on each of page of the spectral sequence.

\bibliographystyle{amsalpha}
\bibliography{bibliography,SSbibfile}

\def\cprime{$'$}
\providecommand{\bysame}{\leavevmode\hbox to3em{\hrulefill}\thinspace}
\providecommand{\MR}{\relax\ifhmode\unskip\space\fi MR }
\providecommand{\MRhref}[2]{%
  \href{http://www.ams.org/mathscinet-getitem?mr=#1}{#2}
}
\providecommand{\href}[2]{#2}
\begin{thebibliography}{Won13}

\bibitem[AE15]{AlishahiEftekharySutured}
Akram Alishahi and Eaman Eftekhary, \emph{A refinement of sutured {F}loer
  homology}, J. Symplectic Geom. \textbf{13} (2015), no.~3, 609--743.

\bibitem[BL12]{BaldwinLevineSpanning}
John~A. Baldwin and Adam~S. Levine, \emph{A combinatorial spanning tree model
  for knot {F}loer homology}, Adv. Math. \textbf{231} (2012), 1886--1939.

\bibitem[Blo11]{BloomMonopole}
Jonathan~M. Bloom, \emph{A link surgery spectral sequence in monopole {F}loer
  homology}, Adv. Math. \textbf{226} (2011), no.~4, 3216--3281.

\bibitem[BN02]{Bar-kh-khovanovs}
Dror Bar-Natan, \emph{On {K}hovanov's categorification of the {J}ones
  polynomial}, Algebr. Geom. Topol. \textbf{2} (2002), 337--370 (electronic).
  \MR{1917056 (2003h:57014)}

\bibitem[BS15]{batson-seed-2015}
Joshua Batson and Cotton Seed, \emph{A link-splitting spectral sequence in
  {K}hovanov homology}, Duke Math. J. \textbf{164} (2015), no.~5, 801--841.

\bibitem[Dae15]{DaemiAbelian}
Aliakbar Daemi, \emph{Abelian gauge theory, knots, and odd {K}hovanov
  homology}, \arxiv{1508.07650}, 2015.

\bibitem[HN13]{HN-kh-unlink-detection}
Matthew Hedden and Yi~Ni, \emph{Khovanov module and the detection of unlinks},
  Geom. Topol. \textbf{17} (2013), no.~5, 3027--3076. \MR{3190305}

\bibitem[HW14]{HeddenWatsonGeography}
Matthew Hedden and Liam Watson, \emph{On the geography and botany of knot
  {F}loer homology}, \arxiv{1404.6913}, 2014.

\bibitem[Jae13]{JaegerTwisted}
Thomas~C. Jaeger, \emph{A remark on {R}oberts' totally twisted {K}hovanov
  homology}, J. Knot Theory Ramifications \textbf{22} (2013), no.~6, 1350022,
  16.

\bibitem[Kau87]{Kau-knot-resolutions}
Louis~H. Kauffman, \emph{State models and the {J}ones polynomial}, Topology
  \textbf{26} (1987), no.~3, 395--407. \MR{899057 (88f:57006)}

\bibitem[Kho00]{Kho-kh-categorification}
Mikhail Khovanov, \emph{A categorification of the {J}ones polynomial}, Duke
  Math. J. \textbf{101} (2000), no.~3, 359--426. \MR{1740682 (2002j:57025)}

\bibitem[Kho03]{Kho-kh-patterns}
\bysame, \emph{Patterns in knot cohomology. {I}}, Experiment. Math. \textbf{12}
  (2003), no.~3, 365--374. \MR{2034399 (2004m:57022)}

\bibitem[Kho06]{Kho-kh-Frobenius}
\bysame, \emph{Link homology and {F}robenius extensions}, Fund. Math.
  \textbf{190} (2006), 179--190. \MR{2232858 (2007i:57010)}

\bibitem[KM11]{KronheimerMrowkaUnknot}
Peter Kronheimer and Tomasz Mrowka, \emph{Khovanov homology is an
  unknot-detector}, Publ. Math. Inst. Hautes \'Etudes Sci. (2011), no.~113,
  97--208.

\bibitem[Man07]{ManolescuSkein}
Ciprian Manolescu, \emph{An unoriented skein exact triangle for knot {F}loer
  homology}, Math. Res. Lett. \textbf{14} (2007), no.~5, 839--852.

\bibitem[Man14]{ManionSign}
Andrew Manion, \emph{A sign assignment in totally twisted {K}hovanov homology},
  Algebr. Geom. Topol. \textbf{14} (2014), no.~2, 753--767.

\bibitem[Ni14]{NiActions}
Yi~Ni, \emph{Homological actions on sutured {F}loer homology}, Math. Res. Lett.
  \textbf{21} (2014), no.~5, 1177--1197.

\bibitem[ORS13]{OSzR-kh-oddkhovanov}
Peter~S. Ozsv{\'a}th, Jacob Rasmussen, and Zolt{\'a}n Szab{\'o}, \emph{Odd
  {K}hovanov homology}, Algebr. Geom. Topol. \textbf{13} (2013), no.~3,
  1465--1488. \MR{3071132}

\bibitem[OS04a]{OSzKnot}
Peter Ozsv{\'a}th and Zolt{\'a}n Szab{\'o}, \emph{Holomorphic disks and knot
  invariants}, Adv. Math. \textbf{186} (2004), no.~1, 58--116.

\bibitem[OS04b]{OSzProperties}
\bysame, \emph{Holomorphic disks and three-manifold invariants: properties and
  applications}, Ann. of Math. (2) \textbf{159} (2004), no.~3, 1159--1245.

\bibitem[OS04c]{OSz3Manifold}
\bysame, \emph{Holomorphic disks and topological invariants for closed
  three-manifolds}, Ann. of Math. (2) \textbf{159} (2004), no.~3, 1027--1158.

\bibitem[OS05]{OSzDouble}
\bysame, \emph{On the {H}eegaard {F}loer homology of branched double-covers},
  Adv. Math. \textbf{194} (2005), no.~1, 1--33.

\bibitem[OS08]{OSzLink}
\bysame, \emph{Holomorphic disks, link invariants and the multi-variable
  {A}lexander polynomial}, Algebr. Geom. Topol. \textbf{8} (2008), no.~2,
  615--692.

\bibitem[Ras05]{RasmussenHomologies}
Jacob Rasmussen, \emph{Knot polynomials and knot homologies}, Geometry and
  topology of manifolds, Fields Inst. Commun., vol.~47, Amer. Math. Soc.,
  Providence, RI, 2005, pp.~261--280.

\bibitem[Rob15]{RobertsTwisted}
Lawrence~P. Roberts, \emph{Totally twisted {K}hovanov homology}, Geom. Topol.
  \textbf{19} (2015), no.~1, 1--59.

\bibitem[Sar11]{Sar-signs}
Sucharit Sarkar, \emph{A note on sign conventions in link {F}loer homology},
  Quantum Topology \textbf{2} (2011), no.~3, 217--239.

\bibitem[Sar15]{Sar-action}
\bysame, \emph{Moving basepoints and the induced automorphisms of link {F}loer
  homology}, Algebr. Geom. Topol. (2015), no.~15, 2479--2515.

\bibitem[Sca15]{ScadutoOdd}
Christopher~W. Scaduto, \emph{Instantons and odd {K}hovanov homology}, J.
  Topol. \textbf{8} (2015), no.~3, 744--810.

\bibitem[Shu11]{ShumakovitchPatterns}
Alexander~N. Shumakovitch, \emph{Patterns in odd {K}hovanov homology}, J. Knot
  Theory Ramifications \textbf{20} (2011), no.~1, 203--222.

\bibitem[Sza10]{SzaboSpectral}
Zolt\'an Szab\'o, \emph{A geometric spectral sequence in {K}hovanov homology},
  \arxiv{1010.4252}, 2010.

\bibitem[Won13]{WongSkein}
C.-M.~Mike Wong, \emph{Grid diagrams and {M}anolescu's unoriented skein exact
  triangle for knot {F}loer homology}, \arxiv{1305.2562}, 2013.

\end{thebibliography}

\end{document}